	\newtheorem{theorem}{Theorem}[section]
	\newtheorem{corollary}{Corollary}[section]
	\newtheorem{lemma}{Lemma}[section]
	\newtheorem{remark}{Remark}[section]
	\newtheorem{example}{Example}[section]
	\newtheorem{conj}{Conjecture}[section]
	\DeclareMathOperator{\spec}{spec}
       \renewcommand{\Re}{\operatorname{Re}}
	\title{ Bounds and extremal graphs for the energy of complex unit gain graphs\footnote{Some of the results of this article are part of the PhD thesis of first author}}
\author{Aniruddha Samanta \thanks{Theoretical Statistics and Mathematics Unit, Indian Statistical Institute, Kolkata-700108, India. Email: aniruddha.sam@gmail.com}\  \and M. Rajesh Kannan\thanks{Department of Mathematics, Indian Institute of Technology Hyderabad, Hyderabad 502285, India. Email: rajeshkannan@math.iith.ac.in, rajeshkannan1.m@gmail.com }}
\date{\today}
\begin{document}
\maketitle
\baselineskip=0.25in

\begin{abstract}

		A complex unit gain graph ($ \mathbb{T} $-gain graph), $ \Phi=(G, \varphi) $ is a graph where the gain function $ \varphi $ assigns a unit complex number to each orientation of an edge of $ G $ and its inverse is assigned to the opposite orientation. The associated adjacency matrix $ A(\Phi) $ is defined canonically. The energy $ \mathcal{E}(\Phi) $ of a $ \mathbb{T} $-gain graph  $ \Phi $ is the sum of the absolute values of all eigenvalues of $ A(\Phi) $. For any connected triangle-free $ \mathbb{T} $-gain graph $ \Phi $ with the minimum vertex degree $ \delta$, we establish a lower bound $ \mathcal{E}(\Phi)\geq 2\delta$ and characterize the equality. Then, we present a relationship between the characteristic and the matching polynomial of $ \Phi $. Using this, we obtain an upper bound for the energy $ \mathcal{E}(\Phi)\leq 2\mu\sqrt{2\Delta_e+1} $ and characterize the classes of graphs for which the bound sharp, where $ \mu$ and $ \Delta_e$ are the matching number and the maximum edge degree of $ \Phi $, respectively. Further, for any unicyclic graph $ G $,  we study the gains for which the gain energy  $ \mathcal{E}(\Phi) $ attains the maximum/minimum among all $ \mathbb{T} $-gain graphs defined on $G$.		
		
	\end{abstract}

{\bf AMS Subject Classification(2010):} 05C50, 05C22, 05C35.

\textbf{Keywords.} Gain energy, Energy of a vertex, Matching polynomial, Maximum edge degree, Coulsion integral formula.

\section{Introduction}
Let $ G $ be a simple graph with vertex set $ V(G)=\{v_1, v_2, \dots, v_n \} $ and edge set $ E(G) $. We denote $ v_s \sim v_t $ if the vertices $ v_s $ and $ v_t $ are adjacent,  and the edge between them is denoted by $ e_{s,t} $. The \emph{adjacency matrix } $ A(G) $ of a graph $ G $ is a symmetric $ n\times n $ matrix whose $ (s,t)th $ entry is $ 1 $ if $ v_s\sim v_t $ and zero otherwise. Let the multiset $ \{\lambda_1, \dots, \lambda_n\} $ be the eigenvalues (or the spectrum) of $ A(G) $. The \emph{energy} of the graph $ G $ is defined as 

\begin{equation*}
	\mathcal{E}(G)=\sum\limits_{k=1}^{n}|\lambda_k|.
\end{equation*}
Graph energy was systematically studied by Gutman \cite{Gutman} in 1978, which was motivated by the notion of total $ \pi $-electron energy from chemistry. Graph energy has several applications in science and engineering. For a detailed study on graph energy and its applications, we refer to \cite{book_Gutman}. This paper aims to study the energy of $\mathbb{T}$-gain graphs. 
A \emph{directed graph(or digraph)} $ X $ is an order pair $ (V(X), E(X)) $, where\break $ V(X)=\{ v_{1}, v_{2}, \dots,v_{n}\} $ is the vertex set and $ E(X) $ is the directed edge set. A directed edge from the vertex $ v_{p} $ to the vertex $ v_{q} $ is denoted by $ \vec{e}_{p,q} $.

From a simple graph $G$, by orienting each undirected edge $ e_{p,q} \in E(G)$ in two opposite directions, namely $ \vec{e}_{p,q}$ and $ \vec{e}_{q,p}$, we get a digraph.  Let $ \vec{E}(G)=\{\vec{ e}_{p,q}, \vec{e}_{q,p}: e_{p,q}\in E(G) \} $ and $ \mathbb{T}=\{ z \in \mathbb{C}: |z|=1\} $. A \emph{complex unit gain graph} (simply, $ \mathbb{T} $-gain graph) on a simple graph $ G $ is a pair $ (G, \varphi) $, where $ \varphi: \vec{E}(G) \rightarrow \mathbb{T} $ is a mapping  such that $ \varphi( \vec{e}_{p,q}) =\varphi(\vec{e}_{q,p})^{-1}$. A $ \mathbb{T} $-gain graph $ (G, \varphi) $ is  denoted by $ \Phi $.  For more details about the $\mathbb{T}$-gain graphs, we refer to \cite{Tgain9,Tgain8(line),Reff2016,our-paper3, Tgain3(rank),Zaslav}.

The \emph{ adjacency matrix} of $ \Phi$ is the Hermitian  matrix $ A(\Phi)=(a_{p,q})_{n \times n }$  defined as follows:
$$a_{p,q}=\begin{cases}
	\varphi(\vec{e}_{p,q})&\text{if } \mbox{$v_p\sim v_q$},\\
	0&\text{otherwise.}\end{cases}$$

Let the multiset $ \{\lambda_1, \dots, \lambda_n\} $ be the eigenvalues of $ A(\Phi) $ (or the spectrum of $ \Phi $), and is denoted by $\spec(\Phi)$. The energy of $ \Phi $, denoted by  $ \mathcal{E}(\Phi) $, is defined by $\sum\limits_{j=1}^{n}|\lambda_j| $. The spectral radius of $ \Phi $, denoted by $ \rho(\Phi) $, is the maximum of absolute values of all the eigenvalues of $\Phi$. The adjacency matrices of  $\mathbb{T}$-gain graphs include several known classes of adjacency: Signed adjacency matrices \cite{signed-adj}, Hermitian adjacency matrices are mixed graphs of first kind and second kind \cite{ guo-mohar-jgt, Lie, mohar-secondkind}, complex adjacency matrices with entries are from the set $\{\pm 1, \pm i\}$ \cite{bap-kal-pat-weighted} and so on.

The \emph{degree} of a vertex $ v$  of a graph $G$  is the number of edges incident with $ v $, denoted by $ d(v) $. Then, the maximum and the minimum vertex degree of $ G $ are denoted by  $ \Delta(G) $ and $ \delta(G) $, respectively. In \cite{Xiaobin-Ma}, Ma proved the following bound for the energy of a simple graph in terms of the minimum vertex degree.

\begin{theorem}[{\cite[Theorem 1.2]{Xiaobin-Ma}}]\label{Th1.1}
	Let $ G $ be a connected simple graph with the minimum vertex degree $\delta$. Then $\mathcal{E}(G)\geq 2\delta$ and equality occurs if and only if $ G $ is a multipartite graph with equal partition size.
\end{theorem}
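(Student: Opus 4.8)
The plan is to combine the standard trace identities for the adjacency spectrum with the classical lower bound on the spectral radius by the average degree. First I would record that, since $A(G)$ is a real symmetric matrix with zero diagonal, $\sum_{k=1}^{n}\lambda_k = \tr A(G) = 0$. Hence the positive eigenvalues and the absolute values of the negative eigenvalues contribute equally to the energy, giving the identity $\mathcal{E}(G) = 2\sum_{\lambda_k>0}\lambda_k$. Because $G$ has at least one edge (as $\delta\ge 1$), the largest eigenvalue $\lambda_1 = \rho(G)>0$ is itself one of the positive eigenvalues, so dropping all the remaining positive eigenvalues yields $\mathcal{E}(G) \ge 2\lambda_1 = 2\rho(G)$.

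Next I would bound $\rho(G)$ below by the minimum degree. Testing the Rayleigh quotient of $A(G)$ against the all-ones vector $\mathbf{1}$ gives
\[
\rho(G) = \max_{x\neq 0}\frac{x^{\top}A(G)x}{x^{\top}x} \ \ge\ \frac{\mathbf{1}^{\top}A(G)\mathbf{1}}{\mathbf{1}^{\top}\mathbf{1}} = \frac{2m}{n} = \bar d(G) \ \ge\ \delta,
\]
where $m=|E(G)|$, $n=|V(G)|$, and $\bar d(G)$ is the average degree, which is at least the minimum degree $\delta$. Chaining these inequalities produces $\mathcal{E}(G) \ge 2\rho(G) \ge 2\bar d(G) \ge 2\delta$, the asserted bound.

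For the equality characterization I would track where each inequality can be tight. Equality in $\mathcal{E}(G) = 2\sum_{\lambda_k>0}\lambda_k \ge 2\lambda_1$ forces $G$ to have exactly one positive eigenvalue; here I would invoke the classical theorem that a connected graph has a single positive eigenvalue if and only if it is a complete multipartite graph. Simultaneously, equality throughout $\rho(G)\ge \bar d(G)\ge \delta$ forces $\bar d(G)=\delta$, and since every degree is at least $\delta$ while they average to $\delta$, the graph is $\delta$-regular. A complete multipartite graph $K_{n_1,\dots,n_k}$ is regular precisely when all the part sizes coincide, so $G = K_{a,\dots,a}$ for some $a$. Conversely, for $G=K_{a,\dots,a}$ with $k$ parts the spectrum is $\{(k-1)a,\ 0^{[k(a-1)]},\ (-a)^{[k-1]}\}$, whence $\mathcal{E}(G) = (k-1)a + (k-1)a = 2(k-1)a = 2\delta$, confirming equality.

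I expect the main obstacle to lie in the equality analysis rather than in the bound itself. The three inequalities are routine, but pinning down the extremal graphs requires the structural classification of connected graphs with exactly one positive eigenvalue and then the careful combination of that classification with the regularity forced by $\rho(G)=\delta$; one must check that these two constraints together compel equal part sizes, and finally verify the converse by computing the spectrum of the balanced complete multipartite graph.
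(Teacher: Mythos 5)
Your proposal is correct, but note that the paper does not actually prove this statement: Theorem~\ref{Th1.1} is quoted from Ma \cite{Xiaobin-Ma}, and the chain $\mathcal{E}(G)\geq 2\lambda_1\geq 2\bar d(G)\geq 2\delta$ you use is essentially the ``simple proof'' the authors attribute to Oboudi \cite{Mohammad_Reza_Oboudi}. What the paper does prove is the gain-graph analogue (Theorem~\ref{Th3.3}), and there the route is genuinely different: since $\mathcal{E}(\Phi)\geq 2\rho(\Phi)$ fails to control $\rho(G)$ for general gains (only $\rho(\Phi)\leq\rho(G)$ is available), the authors instead bound each vertex energy from below by $d(v_i)/\rho(G)$, sum to get $\mathcal{E}(\Phi)\geq 2m/\rho(G)$, and then push $\rho(G)\leq\sqrt m$ together with Mantel's theorem $m\leq\lfloor n^2/4\rfloor$ to reach $2\delta$; this is why their result needs the triangle-free hypothesis and why their extremal family is the balanced $K_{\delta,\delta}$ rather than all balanced complete multipartite graphs. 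Your argument buys a cleaner and more general statement in the undirected setting (no triangle-free assumption, full multipartite extremal family via Smith's classification of connected graphs with one positive eigenvalue), while the paper's vertex-energy argument is the one that survives the passage to $\mathbb{T}$-gain graphs. One small point of care in your write-up: equality in the Rayleigh-quotient step by itself gives $\rho(G)=\bar d(G)$, and it is the separate equality $\bar d(G)=\delta$ that forces $\delta$-regularity; you do state this correctly, and the converse computation of the spectrum of $K_{a,\dots,a}$ closes the characterization.
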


Later, Oboudi \cite{Mohammad_Reza_Oboudi} gave a simple proof of the above result. Our first objective in this article is to establish a lower bound for the energy of a connected triangle-free $ \mathbb{T} $-gain graph in terms of the minimum vertex degree and to characterize the sharpness of the inequality with the help of vertex energy. First, we show that the counterpart of Theorem \ref{Th1.1} need not be true for $ \mathbb{T} $-gain graphs (See, example \ref{ex2}). Surprisingly, the counterpart of Theorem \ref{Th1.1} holds for triangle-free $ \mathbb{T} $-gain graphs. Moreover, we characterize the $\mathbb{T}$-gain graphs for the bound is sharp. This is done in Section  \ref{lower_bound}.  

The \emph{degree of an edge $ e$} of $ G $ is the number of edges incident with it. Let $ \Delta_e(G) $ denote the maximum edge degree of $ G $. A \emph{matching} in a simple graph $ G $ is a set of independent edges of $ G $. The \emph{matching number} $ \mu(G) $ is the cardinality of a maximum matching of $ G $. Wang and Ma \cite{Wang-Ma} established a lower bound $ \mathcal{E}(G)\geq 2\mu(G)$ for bipartite graph $ G $. In \cite{Wong-Wang-Chu}, Dein Wong et al. proved the result for any simple graph $ G $. Later, this result has been extended for mixed graphs \cite{Wei-Li} and $ \mathbb{T} $-gain graphs \cite{our-paper3}. Yingui Pan et al. \cite{Pan-Chen-Li} obtained the following upper bound for energy $ \mathcal{E}(G) $ in terms of $ \mu(G) $.  Let $P_n$ denote the path graph on $n$ vertices.

\begin{theorem}[{\cite[Theorem 1]{Pan-Chen-Li}}]\label{Th1.2}  
	Let $ G $ be a simple graph with the matching number $ \mu$ and maximum edge degree $\Delta_e $. Then 
	\begin{itemize}
		\item[(i)] If $ \Delta_e$ is even, then $ \mathcal{E}(G)\leq 2\mu\sqrt{2\Delta_e+1} $. Equality occurs if and only if $G$ is the disjoint union of $ \mu$ copies of $ P_2$ and some isolated vertices.
		\item [(ii)]  If $ \Delta_e$ is odd, then $ \mathcal{E}(G)\leq \mu\left( \sqrt{b+2\sqrt{b}}+\sqrt{b-2\sqrt{b}}\right) $, where $ b=2(\Delta_e+1) $. Equality occurs if and only if $G$ is $ \mu$ copies of $ P_3$ and some isolated vertices.
	\end{itemize} 
\end{theorem}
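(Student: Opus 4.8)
The plan is to control the energy through the nuclear (trace) norm and to decompose the graph along a maximum matching. Writing $A=A(G)$ and recalling that $A$ is symmetric, we have $\mathcal{E}(G)=\sum_j|\lambda_j|=\|A\|_*$, the sum of the singular values. Fix a maximum matching $M=\{e_1,\dots,e_\mu\}$ with $e_i=u_iv_i$. Since $M$ is maximum it is maximal, so every edge of $E(G)\setminus M$ shares a vertex with some $e_i$; assign each such edge to exactly one matching edge it meets. This partitions $E(G)=E_1\sqcup\cdots\sqcup E_\mu$, where $E_i$ consists of $e_i$ together with the edges assigned to it, all of which are incident to $u_i$ or $v_i$. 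Letting $A_i=A(G_i)$ be the adjacency matrix of the spanning subgraph $G_i=(V(G),E_i)$, we get $A=\sum_{i=1}^\mu A_i$, and the triangle inequality for the nuclear norm yields
\[
\mathcal{E}(G)=\|A\|_*\le\sum_{i=1}^\mu\|A_i\|_*=\sum_{i=1}^\mu\mathcal{E}(G_i),
\]
so it suffices to bound each piece.

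Each $G_i$ has all of its edges incident to $\{u_i,v_i\}$ and contains $u_iv_i$, and the number of edges assigned to it is at most the edge degree of $e_i$, whence $|E_i|\le 1+\Delta_e$. I would first record the structural fact that $\operatorname{rank}(A_i)\le 4$: every row other than those indexed by $u_i,v_i$ is supported on the two columns $u_i,v_i$, so the whole row space lies in a space of dimension at most four. Thus $A_i$ has at most four nonzero eigenvalues, with $\tr(A_i)=0$ and $\tr(A_i^2)=2|E_i|\le 2(\Delta_e+1)$. The next step is to maximize $\mathcal{E}(G_i)$ over this family. Parametrizing $G_i$ by the numbers $a,b$ of private neighbours of $u_i,v_i$ and the number $c$ of common neighbours, one solves the resulting quartic $(\lambda^2-a-c)(\lambda^2-b-c)=(\lambda+c)^2$ for the nonzero eigenvalues and shows the energy is maximized when $c=0$, i.e. when $G_i$ is a double star $D_{a,b}$ with $a+b\le\Delta_e$. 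For $D_{a,b}$ the four nonzero eigenvalues are $\pm\sqrt{t_1},\pm\sqrt{t_2}$ with $t_1+t_2=a+b+1$ and $t_1t_2=ab$, so $\mathcal{E}(D_{a,b})=2\sqrt{a+b+1+2\sqrt{ab}}$; maximizing over integers with $a+b\le\Delta_e$ selects the balanced split, which gives exactly $2\sqrt{2\Delta_e+1}$ when $\Delta_e$ is even and $2\sqrt{\Delta_e+1+\sqrt{\Delta_e^2-1}}$ when $\Delta_e$ is odd. A short computation shows the latter equals the stated $\sqrt{b+2\sqrt b}+\sqrt{b-2\sqrt b}$ with $b=2(\Delta_e+1)$, since both squares equal $4\bigl(\Delta_e+1+\sqrt{\Delta_e^2-1}\bigr)$. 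Summing over the $\mu$ pieces produces both bounds.

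For the equality characterization I would track the two places where the estimate can be loose: the nuclear-norm triangle inequality, and the per-piece maximization. Nuclear-norm additivity $\|\sum_i A_i\|_*=\sum_i\|A_i\|_*$ forces the positive and negative eigenspaces of the summands to align, which for adjacency matrices pushes the subgraphs $G_i$ to be vertex-disjoint (so that $A$ is block diagonal). Combining vertex-disjointness with each $G_i$ being an energy-maximizing double star, one checks this is compatible only when $\Delta_e=0$ in the even case, forcing every $G_i=P_2$ and hence $G=\mu P_2\cup(\text{isolated vertices})$, and only when the pieces are copies of $P_3$ in the odd case.

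The main obstacle is the per-piece optimization. The crude estimate $\mathcal{E}(G_i)\le\sqrt{\operatorname{rank}(A_i)\,\tr(A_i^2)}\le 2\sqrt{2\Delta_e+2}$ coming from $\operatorname{rank}(A_i)\le 4$ and the Frobenius norm is off by the decisive additive constant, so I genuinely need the exact spectral analysis of the rank-$4$ family: the monotonicity in $c$ showing common neighbours never help, and the integrality of the balanced split that separates the even and odd regimes. The second delicate point is converting the soft nuclear-norm additivity condition into the clean combinatorial statement that $G$ is a disjoint union of $P_2$'s (respectively $P_3$'s); I expect this to require a direct eigenvector/alignment argument rather than an appeal to a black-box norm identity.
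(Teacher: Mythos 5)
Your overall strategy coincides with the one the paper uses for its gain-graph generalization (Theorem \ref{Th4.3}, which contains the present statement as the trivial-gain special case): decompose $G$ along a maximum matching into $\mu$ edge-disjoint pieces all of whose edges meet a fixed matching edge, apply the Ky Fan subadditivity of the sum of singular values (Theorem \ref{Th2.8}), and bound each piece by the energy of the balanced double star $T_1$ of Figure \ref{fig3}. Your quartic $(\lambda^2-a-c)(\lambda^2-b-c)=(\lambda+c)^2$ for the piece with $a,b$ private and $c$ common neighbours is correct, and your closed forms for $\mathcal{E}(D_{a,b})$ (the paper's $T_{a,b}$) and for the even/odd maxima agree with Lemma \ref{Lm2.2}. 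The one substantive difference on the inequality side is that you propose to prove that common neighbours never help ($c=0$ is optimal for a fixed edge budget) by a direct analysis of that quartic, whereas the paper outsources exactly this to Lemma \ref{lm4.1} (Pan--Chen--Li) and the double-star optimization to Lemma \ref{Lm2.1} (Tian--Wong). That step is a genuine lemma rather than a one-line computation, but your plan for it is sound for ordinary graphs; it is only in the gain setting (Example \ref{deco-coun-exam}) that it breaks down, which is why the paper needs the modified decomposition of Lemma \ref{Lm4.7}.

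The genuine gap is in the equality characterization. You propose to extract vertex-disjointness of the pieces from the equality case of the nuclear-norm triangle inequality (``the positive and negative eigenspaces align, which pushes the $G_i$ to be vertex-disjoint''); this implication is neither proved nor obviously true, and you yourself flag it as requiring a separate alignment argument. The paper never invokes the equality case of Theorem \ref{Th2.8}. Instead it argues combinatorially: equality forces each piece to attain the maximal per-piece energy $\mathcal{E}(T_1)$, hence each piece is isomorphic to $T_1$ (strictness in Lemma \ref{Lm4.5} rules out pieces containing a triangle, and the equality case of Lemma \ref{Lm4.6} pins down the tree); since the matching edge already has edge degree $\Delta_e$ inside its own piece, no further edge of $G$ can be incident with it, so each piece is a full connected component; and finally, if $\Delta_e\geq 2$ then such a component has a pendant edge at each end of its matching edge, and swapping the matching edge for those two pendants enlarges the matching, a contradiction. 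Hence $\Delta_e\leq 1$ and the components are $P_2$'s (even case, $\Delta_e=0$) or $P_3$'s (odd case, $\Delta_e=1$). You should replace the norm-additivity mechanism with this (or an equally concrete) combinatorial chain; as written, the equality direction of your argument does not close.
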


In \cite{S_Akbari}, Akbari et al. obtained an alternative simple proof of Theorem \ref{Th1.2}. Note that this result was first proved by Tian and Wong \cite{F.Tian-D.Wong} for triangle-free graphs. Our second objective of this article is to extend the above bound for the $ \mathbb{T} $-gain graphs. The main idea of the proof of Akbari et al. depends on graph decomposition. This decomposition technique is not helpful for the $\mathbb{T}$-gain graphs (see Example \ref{deco-coun-exam}). We develop an alternate graph decomposition technique for the $\mathbb{T}$-gain graphs. Also, we establish an integral representation for the $\mathbb{T}$-gain graphs and a relation between the characteristic and matching polynomials of $\mathbb{T}$-gain graphs. Using these ideas, we extend Theorem \ref{Th1.2} for the $\mathbb{T}$-gain graphs. This is done in Section \ref{upper_bound}.
	

	 Let $ \mathcal{T}_G $ denote the collection of all $\mathbb{T}$-gain graphs defined on $G$. A natural question that arises here is the following: For which simple graph $ G $, the energy $ \mathcal{E}(G) $ is minimum/maximum among its $ \mathbb{T} $-gain graphs $ \mathcal{T}_G $ ?
	 
	 For a graph $G$, the Seidel adjacency matrix is defined as follows: $S(G) = J - 2A(G) - I$, where $J$ is the all one's matrix and  $A(G)$ is the adjacency matrix of $G$. The Seidel adjacency matrix of any graph $G$ can be viewed as a signed adjacency matrix of a complete signed graph. One of the conjectures regarding the Seidel energy given by Haemers \cite{Haemers}  can be rephrased as follows:  $ \mathcal{E}(K_{n})\leq \mathcal{E}(K_{n}, \psi) $, for any signed graph $ (K_{n}, \psi) $.  Akbari et al. affirmatively settled this conjecture \cite{Akbari}. In Section \ref{extremum_energy}, first, we show that the Haemers conjecture need not be true for a $ \mathbb{T} $-gain graph $(K_n, \psi) $. Moreover, in the case of $\mathbb{T}$-gain graphs, the extremal energy need not be attained for the graph $G$ (unlike the signed case). So it is natural to ask the following question: Identify the classes of graphs for which the extremal energy is the energy of $G$. In \cite{our-paper3}, we established that $ \mathcal{E}(K_{n,n})\leq \mathcal{E}(K_{n,n}, \varphi) $ for any $ \mathbb{T} $-gain graph $ (K_{n,n}, \varphi) $.
	 In Theorem \ref{Th5.1}, we show that for any unicyclic graph $ G $,  $ \mathcal{E}(G) $ attains extremum(either maximum or minimum) energy among all the $ \mathbb{T} $-gain graphs $ \Phi=(G, \varphi) $. The extremal families are also characterized. Two $ \mathbb{T} $-gain graphs $ \Phi_1 $ and $ \Phi_2 $ are \textit{ equienergetic} if $ \mathcal{E}(\Phi_1)=\mathcal{E}(\Phi_2)$.   If $G$ is a tree, then any two elements of $\mathcal{T}_G$ are equienergetic. In Theorem \ref{thm-equi-ener}, we show that if $G$ is a graph such that all its cycles are vertex disjoint, then $\mathcal{T}_G$ is not equienergetic. Finally, we conjecture that $\mathcal{T}_G$ is equienergetic if and only if $G$ is a tree. 
	 
	
\section{Preliminaries}\label{prelim}
	This section recalls some known definitions and results necessary for this article. Let $ G $ be an undirected simple graph. If $ G $ contains no triangle as an induced subgraph, it is called a \emph{triangle-free} graph.  Let us recall the following couple of results for undirected graphs.

\begin{theorem}[{\cite[Theorem 4.7]{book-Jukna}}](Mantel's theorem)\label{Th2.1}
	Any $ n $-vertices triangle-free graph has at most $ \lfloor \frac{n^{2}}{4} \rfloor $ edges.
\end{theorem}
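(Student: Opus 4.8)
The plan is to prove the bound by a double-counting argument over the edges, combined with the Cauchy--Schwarz inequality. Let $G$ be a triangle-free graph on $n$ vertices with $m=|E(G)|$ edges, and write $d(v)$ for the degree of a vertex $v$. The key structural observation is that if $\{u,v\}$ is an edge, then $u$ and $v$ have no common neighbour (otherwise $G$ would contain a triangle), so the neighbourhoods $N(u)$ and $N(v)$ are disjoint subsets of $V(G)$. Hence $d(u)+d(v)=|N(u)|+|N(v)|=|N(u)\cup N(v)|\le n$ for every edge $\{u,v\}\in E(G)$.

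Next I would sum this inequality over all edges. On one hand, $\sum_{\{u,v\}\in E(G)}\bigl(d(u)+d(v)\bigr)\le mn$. On the other hand, each vertex $v$ contributes $d(v)$ to exactly $d(v)$ of the summands, so the same sum equals $\sum_{v\in V(G)}d(v)^2$. Applying Cauchy--Schwarz (equivalently, the power-mean inequality) gives $\sum_{v}d(v)^2\ge \tfrac{1}{n}\bigl(\sum_v d(v)\bigr)^2=\tfrac{(2m)^2}{n}=\tfrac{4m^2}{n}$, where I have used the handshake identity $\sum_v d(v)=2m$. Chaining the two estimates yields $\tfrac{4m^2}{n}\le mn$, and dividing by $m$ (the statement being trivial when $m=0$) gives $m\le \tfrac{n^2}{4}$.

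Finally, since $m$ is a nonnegative integer, the bound $m\le n^2/4$ immediately upgrades to $m\le\lfloor n^2/4\rfloor$, which is the claim. I do not expect a genuine obstacle here, as this is a classical extremal result; the only points demanding care are the justification of $d(u)+d(v)\le n$ (which rests entirely on triangle-freeness) and the correct orientation of the Cauchy--Schwarz step. As an alternative one could argue by induction on $n$: deleting the two endpoints of any edge removes at most $n-1$ edges (each of the remaining $n-2$ vertices is adjacent to at most one endpoint, plus the deleted edge itself), and combining this with the inductive bound $\lfloor (n-2)^2/4\rfloor$ reproduces $\lfloor n^2/4\rfloor$ after a short parity computation.
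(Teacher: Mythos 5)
Your proof is correct. The paper does not prove this statement at all --- it is quoted as a known result (Mantel's theorem) with a citation to Jukna's book --- so there is no in-paper argument to compare against. Your double-counting argument (disjoint neighbourhoods give $d(u)+d(v)\le n$ on each edge, sum over edges to get $\sum_v d(v)^2\le mn$, then Cauchy--Schwarz to get $4m^2/n\le mn$) is the standard proof and every step, including the final integrality upgrade to $\lfloor n^2/4\rfloor$ and the inductive alternative you sketch, checks out.
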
	
\begin{theorem} [{\cite[Theorem 2.38]{book-stanic}}]\label{Th2.2}
	Let $ G $ be a graph of $ m $ edges with the largest eigenvalue $ \lambda_1(G) $. Then $ \lambda_1(G)\leq \sqrt{m}.$
\end{theorem}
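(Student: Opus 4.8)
The plan is to attack $\lambda_1$ directly through the Rayleigh quotient and then to extract the factor $\sqrt m$ from a sharp estimate of the quadratic form restricted to the edge set. Since $A(G)$ is real and symmetric, we have $\lambda_1=\max_{\lVert x\rVert=1} x^{\top}A(G)x=\max_{\lVert x\rVert=1} 2\sum_{uv\in E(G)} x_u x_v$. First I would reduce to the connected case: if $G$ is disconnected, $\lambda_1(G)$ is realized on some component $C$, whose edge count is at most $m$, so it suffices to bound $\lambda_1(C)$. On a connected graph, Perron--Frobenius lets me choose the maximizing unit vector $x$ with nonnegative entries, so I may assume $x_v\ge 0$ for all $v$.

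With such an $x$ fixed, I would apply Cauchy--Schwarz to the sum over the $m$ edges:
\begin{equation*}
\lambda_1 = \sum_{uv\in E(G)} 2x_u x_v \le \sqrt{m}\,\Big(\sum_{uv\in E(G)} 4x_u^2 x_v^2\Big)^{1/2} = 2\sqrt{m}\,\Big(\sum_{uv\in E(G)} x_u^2 x_v^2\Big)^{1/2}.
\end{equation*}
The whole statement then collapses to the single scalar inequality $\sum_{uv\in E(G)} x_u^2 x_v^2\le \tfrac14$: once this is in hand the two factors of $2$ cancel and $\lambda_1\le\sqrt m$ follows immediately.

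To prove that crux I would substitute $y_v:=x_v^2\ge 0$, so that $\sum_v y_v=\lVert x\rVert^2=1$ and the target becomes $f(y):=\sum_{uv\in E(G)} y_u y_v\le\tfrac14$ on the probability simplex. I would analyse the maximizer of $f$ by the standard mass--shifting argument: if two non-adjacent vertices both carry positive weight, all of the weight on one can be transferred to the other without decreasing $f$, so the maximum is attained on a vector supported on a clique. On a clique of size $\omega$ the maximum of $f$ equals $\tfrac12\bigl(1-1/\omega\bigr)$, attained at the uniform distribution; the relevant clique size here is $2$, giving exactly $\tfrac12\bigl(1-\tfrac12\bigr)=\tfrac14$ (equivalently, on a single edge $uv$ with $y_u+y_v=1$, AM--GM gives $y_uy_v\le\tfrac14$). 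Feeding this back into the display yields $\lambda_1\le 2\sqrt m\cdot\tfrac12=\sqrt m$.

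The main obstacle is precisely this last estimate. The cheap bound $\sum_{uv\in E(G)}y_uy_v\le\tfrac12\bigl(\sum_v y_v\bigr)^2=\tfrac12$ only delivers the weaker $\lambda_1\le\sqrt{2m}$, and upgrading the constant from $\tfrac12$ to $\tfrac14$ is exactly what produces the sharp $\sqrt m$. I therefore expect the delicate part to be making the mass--shifting reduction rigorous --- showing that the optimum of $f$ sits on a clique and that the pertinent clique size equals $2$ --- since this is the ingredient that controls how concentrated the quadratic form can be across the $m$ edges. Finally, tracking the equality conditions in Cauchy--Schwarz (all $2x_ux_v$ equal over the edges) together with equality in AM--GM would identify the extremal configurations for which $\lambda_1=\sqrt m$.
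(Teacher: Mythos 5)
Your argument is sound up through the Cauchy--Schwarz step, but the crux inequality $\sum_{uv\in E(G)} y_u y_v \le \tfrac14$ on the probability simplex is \emph{false} for general graphs, and the place where this is hidden is the unjustified sentence that ``the relevant clique size here is $2$.'' Your own mass-shifting reduction (this is exactly the Motzkin--Straus argument) shows only that the maximizer of $f$ is supported on \emph{some} clique of $G$; on a clique of size $\omega$ the maximum is $\tfrac12\left(1-\tfrac1\omega\right)$, and nothing forces $\omega=2$ unless $G$ is triangle-free. For a graph containing a triangle the best constant is at least $\tfrac13$, and your chain then yields only $\lambda_1\le\sqrt{2m\left(1-\tfrac{1}{\omega(G)}\right)}$ --- which is precisely Nikiforov's bound recalled as Theorem \ref{Th2.3} in this paper, not $\sqrt m$. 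Indeed the target inequality itself fails without a triangle-free hypothesis: $K_3$ has $m=3$ edges and $\lambda_1=2>\sqrt{3}$.

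The explanation is that the cited result (Nosal's theorem, Theorem 2.38 of Stani\'c's book) carries a triangle-free hypothesis that the transcription of Theorem \ref{Th2.2} above omits; note that the paper only ever applies this bound to triangle-free graphs (in the chain \eqref{eq24} proving Theorem \ref{Th3.3}, where the equality case is then governed by Theorem \ref{Th2.3} with $\omega(G)=2$), and it gives no proof of its own, deferring to the book. If you add ``triangle-free'' to the hypotheses, your proof closes completely: all cliques then have at most two vertices, the mass-shifting argument lands the optimum on a single edge, AM--GM gives $y_uy_v\le\tfrac14$, and the two factors of $2$ cancel to give $\lambda_1\le\sqrt m$. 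As written, however, the step $f(y)\le\tfrac14$ is a genuine gap, and it is exactly the step at which the missing hypothesis must be used.
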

A \emph{clique} of a graph $ G $ is a complete subgraph. The \emph{clique number} of a graph $ G $ is the number of vertices of a clique with maximum size, and it is denoted by $ \omega(G) $. The following result is due to Nikiforov \cite{NIKIFOROV2009819}.

\begin{theorem}[{\cite[Theorem 2]{NIKIFOROV2009819}}]\label{Th2.3}
	Let $ G $ be a graph with no isolated vertices. Let $ \lambda_1(G), m,$ and $ \omega(G) $ be the largest eigenvalue, number of edges, and clique number of $ G $, respectively.
	Then
	\begin{equation*}
		\lambda_1(G)^{2}\leq \frac{2(\omega(G)-1)}{\omega(G)}m.
	\end{equation*}
	Equality holds if and only if one of the following occurs:
	\begin{itemize}
		\item[(i)]  $ \omega(G)=2 $ and $ G $ is complete bipartite graph.
		\item[(ii)] $ \omega(G)\geq 3 $ and $ G $ is a complete regular $ \omega(G)$-partite graph.
	\end{itemize}
\end{theorem}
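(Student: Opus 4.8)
The plan is to convert the spectral bound into a quadratic optimisation problem on the standard simplex and then invoke the classical theorem of Motzkin and Straus, which identifies $\max_{y\ge 0,\ \sum_s y_s=1}\sum_{e_{s,t}\in E(G)}y_sy_t$ with $\tfrac12\bigl(1-\tfrac{1}{\omega(G)}\bigr)$. The bridge between this quantity and $\lambda_1(G)$ will be a single application of the Cauchy--Schwarz inequality to the Perron eigenvector. Since $G$ has no isolated vertices and $A(G)$ is an entrywise nonnegative symmetric matrix, there is a nonnegative unit eigenvector for $\lambda_1(G)$; I may also assume $G$ is connected, as a disconnected graph only introduces slack into the bound below.

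Concretely, let $x=(x_1,\dots,x_n)^{T}\ge 0$ satisfy $A(G)x=\lambda_1(G)x$ and $\sum_s x_s^2=1$. Then
\begin{equation*}
\lambda_1(G)=x^{T}A(G)x=2\sum_{e_{s,t}\in E(G)}x_sx_t,
\end{equation*}
and the Cauchy--Schwarz inequality applied to the $m$ products $x_sx_t$ gives
\begin{equation*}
\Bigl(\tfrac12\lambda_1(G)\Bigr)^{2}=\Bigl(\sum_{e_{s,t}\in E(G)}x_sx_t\Bigr)^{2}\le m\sum_{e_{s,t}\in E(G)}x_s^2x_t^2.
\end{equation*}
Writing $y_s=x_s^2$, the vector $y$ lies in the simplex $\{y\ge 0:\sum_s y_s=1\}$, so the Motzkin--Straus theorem yields $\sum_{e_{s,t}\in E(G)}y_sy_t\le \tfrac12\bigl(1-\tfrac{1}{\omega(G)}\bigr)$. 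Substituting this and clearing denominators produces precisely $\lambda_1(G)^{2}\le \tfrac{2(\omega(G)-1)}{\omega(G)}\,m$.

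The delicate part, and the step I expect to be the main obstacle, is the equality characterisation, because equality now has to hold simultaneously in Cauchy--Schwarz and in Motzkin--Straus. Equality in the former forces the product $x_sx_t$ to be constant over all edges $e_{s,t}$; analysing the criticality (Lagrange) conditions for the fully supported maximiser $y=x^{2}$ of the Motzkin--Straus program forces the non-adjacency relation on $V(G)$ to be transitive, i.e. $G$ must be complete multipartite. On such a graph the Perron vector is constant on each part, say equal to $c_p$ on part $p$, and the constant-product condition becomes: $c_pc_q$ is independent of the adjacent pair of parts $(p,q)$. If $\omega(G)=2$ there is only one pair of parts, the condition is vacuous, and every complete bipartite graph occurs; this is case (i). If $\omega(G)\ge 3$ there are three mutually adjacent parts with $c_pc_q=c_pc_r=c_qc_r$, which forces $c_p=c_q=c_r$; propagating the equality across all parts and substituting back into $A(G)x=\lambda_1(G)x$ forces all parts to have equal size, so $G$ is a complete regular $\omega(G)$-partite graph, which is case (ii). The converse, that both families attain equality, follows by exhibiting the flat-on-each-part Perron vector and checking directly that all edge products agree.
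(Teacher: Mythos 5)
This statement is quoted in the paper's preliminaries as \cite[Theorem 2]{NIKIFOROV2009819} and is not proved there, so there is no in-paper argument to compare against; your proposal has to stand on its own. It does: the chain $\lambda_1(G)=2\sum_{e_{s,t}\in E(G)}x_sx_t$, Cauchy--Schwarz over the $m$ edge-products, and Motzkin--Straus applied to $y=x^2$ gives exactly $\tfrac14\lambda_1(G)^2\le m\cdot\tfrac12\bigl(1-\tfrac{1}{\omega(G)}\bigr)$, which is the inequality, and this is in fact the route Nikiforov himself takes. Your reduction to connected $G$ is legitimate but should be said precisely: since $G$ has no isolated vertices, every component contributes at least one edge, so a disconnected graph makes the inequality strict and cannot occur in the equality case.

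The one step you should not wave at is the claim that the ``criticality (Lagrange) conditions'' for the fully supported maximiser force $G$ to be complete multipartite. First-order stationarity alone does not do this: on any $d$-regular graph the uniform vector satisfies $\sum_{t\sim s}y_t=2f(y)$ for every $s$, yet $C_5$ is not complete multipartite. What you actually need is global maximality plus the standard weight-shifting argument: if $u\not\sim v$ are both in the support, then $N_u(y)=N_v(y)$ and $f$ is affine along the segment moving mass from $v$ to $u$, so the endpoint with $y_v=0$ is again a maximiser; if additionally $v\not\sim w$ but $u\sim w$, then at that endpoint $N_w$ strictly exceeds $2f$, and transferring a little mass onto $w$ from a support vertex with below-average $N_z$ increases $f$, a contradiction. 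With that lemma supplied, the rest of your equality analysis (Perron vector constant on parts by the twin argument, $c_pc_q$ constant over adjacent parts forcing either the vacuous bipartite case or equal $c_p$'s and hence equal part sizes when $\omega\ge 3$, and the direct verification for $K_{a,b}$ and the regular complete $\omega$-partite graph) is correct and complete.
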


Let $ G $ be a simple connected graph and $ v_i, v_j $ be any two vertices of $ G $. Then the \emph{distance} between $ v_i $ and $ v_j $ is the length of the shortest path in between $ v_i $ and $ v_j $. The \emph{diameter} of a graph $ G $ is the maximum distance between any two vertices in $ G $. The \emph{diameter} of a $ \mathbb{T} $-gain graph is the diameter of its underlying graph.
\begin{figure} [!htb]
	\begin{center}
		\includegraphics[scale= 0.60]{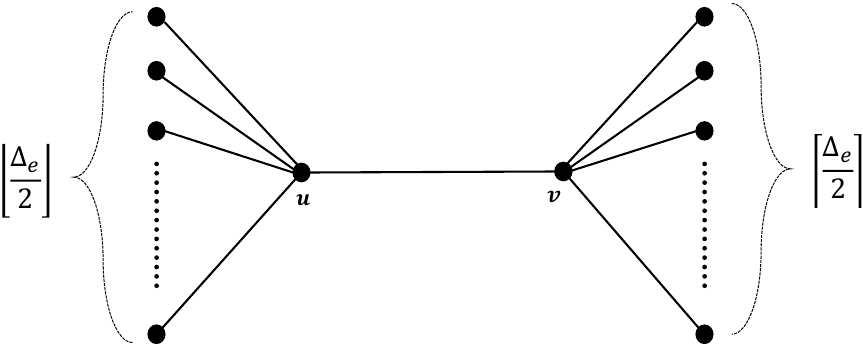}
		\caption{ Graph $ T_1 $} \label{fig3}
	\end{center}
\end{figure}

\begin{lemma}[{\cite[Lemma 2.1]{F.Tian-D.Wong}}]\label{Lm2.1}
	Let $ \mathcal{A}_{\Delta_e,3} $ be the set of all diameter $ 3 $ trees with the maximum edge degree $ \Delta_e $. Then for any $ T\in\mathcal{A}_{\Delta_e,3}  $, $ \mathcal{E}(T)\leq \mathcal{E}(T_1) $, where $ T_1\in \mathcal{A}_{\Delta_e,3}$ is shown in Figure \ref{fig3}. Equality occurs if and only if $ T=T_1 $.
\end{lemma}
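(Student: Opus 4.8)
The plan is to reduce the statement to an explicit one-parameter optimisation. First I would recall that a tree has diameter $3$ if and only if it is a \emph{double star} $S_{a,b}$: two adjacent centres $u,v$, with $a\geq 1$ pendant vertices attached to $u$ and $b\geq 1$ pendant vertices attached to $v$. A direct count shows the central edge $uv$ has edge degree $a+b$, while each pendant edge at $u$ (resp.\ $v$) has edge degree $a$ (resp.\ $b$); hence the maximum edge degree is $\Delta_e=a+b$. Thus $\mathcal{A}_{\Delta_e,3}$ is exactly the finite family $\{S_{a,b}:a+b=\Delta_e,\ a,b\geq 1\}$, parametrised by $a$. Since $T$ is a tree, its gain is switching-equivalent to the trivial gain, so $\mathcal{E}(T)$ coincides with the ordinary graph energy and I may work with the usual adjacency matrix.

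Next I would compute the energy of $S_{a,b}$ in closed form using the equitable partition into $\{u\}$, $\{v\}$, the leaves of $u$, and the leaves of $v$. The $a-1$ and $b-1$ eigenvectors supported on leaf-differences contribute the eigenvalue $0$, and the four remaining eigenvalues satisfy, writing $t=\lambda^2$, the quadratic $t^2-(a+b+1)t+ab=0$. Its roots $t_1,t_2$ obey $t_1+t_2=a+b+1$ and $t_1t_2=ab$, and the nonzero spectrum is $\{\pm\sqrt{t_1},\pm\sqrt{t_2}\}$. Since $t_1,t_2>0$, this gives
\[
\mathcal{E}(S_{a,b})=2\bigl(\sqrt{t_1}+\sqrt{t_2}\bigr)=2\sqrt{(t_1+t_2)+2\sqrt{t_1t_2}}=2\sqrt{\,\Delta_e+1+2\sqrt{ab}\,}.
\]

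With this formula in hand, the remaining work is elementary. The right-hand side is strictly increasing in the product $ab$, and subject to $a+b=\Delta_e$ the product $ab=a(\Delta_e-a)$ is a strictly concave function of $a$, maximised over the integers at $a=\lfloor\Delta_e/2\rfloor$, $b=\lceil\Delta_e/2\rceil$. This balanced double star is precisely $T_1$, so $\mathcal{E}(T)\leq\mathcal{E}(T_1)$ for every $T\in\mathcal{A}_{\Delta_e,3}$. Strict concavity forces the integer maximiser of $a(\Delta_e-a)$ to be unique as an unlabelled double star (the two symmetric choices $\{\lfloor\Delta_e/2\rfloor,\lceil\Delta_e/2\rceil\}$ yield isomorphic graphs), which yields the equality characterisation $T=T_1$.

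I expect the only genuinely substantive step to be the spectral computation for $S_{a,b}$; once the quotient matrix is set up, the characteristic equation $t^2-(a+b+1)t+ab=0$ and the resulting energy formula follow quickly, and the concavity optimisation is routine. The single point requiring care is the equality analysis, namely confirming that the integer maximiser of $a(\Delta_e-a)$ is unique as an unlabelled double star in both the even and the odd case.
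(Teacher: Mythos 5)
Your proposal is correct and complete. Note, however, that the paper itself gives no proof of this statement: Lemma \ref{Lm2.1} is simply quoted from Tian and Wong \cite{F.Tian-D.Wong}, so there is no in-paper argument to compare against; what you have written is a self-contained derivation of the cited result. Your reduction is sound at every step: a tree has diameter $3$ exactly when it is a double star $S_{a,b}$ with $a,b\geq 1$ (any vertex at distance $\geq 2$ from both centres would create a path of length $4$), and the edge-degree count correctly identifies $\Delta_e=a+b$, so $\mathcal{A}_{\Delta_e,3}=\{S_{a,b}:a+b=\Delta_e,\ a,b\geq 1\}$. For the spectral step, an even quicker route than the equitable partition is to observe that $S_{a,b}$ is a tree, so its characteristic polynomial equals its matching polynomial $x^{n}-(a+b+1)x^{n-2}+ab\,x^{n-4}$ (there are $a+b+1$ edges and exactly $ab$ two-matchings), which immediately gives your quadratic $t^{2}-(a+b+1)t+ab=0$ in $t=\lambda^{2}$ and hence
\[
\mathcal{E}(S_{a,b})=2\sqrt{\Delta_e+1+2\sqrt{ab}}.
\]
This closed form is consistent with Lemma \ref{Lm2.2}: for $\Delta_e$ even and $a=b=\Delta_e/2$ it gives $2\sqrt{2\Delta_e+1}$, and for $\Delta_e$ odd and $\{a,b\}=\{\tfrac{\Delta_e-1}{2},\tfrac{\Delta_e+1}{2}\}$ one checks $4(\Delta_e+1+\sqrt{\Delta_e^{2}-1})=\bigl(\sqrt{b+2\sqrt{b}}+\sqrt{b-2\sqrt{b}}\bigr)^{2}$ with $b=2(\Delta_e+1)$. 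The monotonicity in $ab$ and the strict concavity of $a(\Delta_e-a)$ then give both the inequality and the equality characterisation, with the only maximisers $a=\lfloor\Delta_e/2\rfloor$, $b=\lceil\Delta_e/2\rceil$ yielding the single unlabelled tree $T_1$ in both parities.
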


\begin{lemma}[{\cite[Lemma 2.3]{F.Tian-D.Wong}}]\label{Lm2.2}
	Let $ T_1 $ be a tree given in Figure \ref{fig3}. Then $ \mathcal{E}(T_1)=2\sqrt{2\Delta_e+1} $ if $ \Delta_e $ is even and $ \mathcal{E}(T_1)=\sqrt{b+2\sqrt{b}}+\sqrt{b-2\sqrt{b}} $ if $ \Delta_e $ is odd, where $ b=2(\Delta_e+1)$.
\end{lemma}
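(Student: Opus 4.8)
The plan is to compute the spectrum of $T_1$ explicitly and read off the energy from it. By Lemma \ref{Lm2.1}, $T_1$ is the balanced double star: two adjacent central vertices $u,v$ bearing $p$ and $q$ pendant leaves, with $p+q=\Delta_e$ and $\{p,q\}=\{\lfloor\Delta_e/2\rfloor,\lceil\Delta_e/2\rceil\}$, so that $T_1$ has $n=\Delta_e+2$ vertices. First I would note that any eigenvector associated with a nonzero eigenvalue must be constant on the leaves sharing a common center; the $(p-1)+(q-1)=\Delta_e-2$ independent ``differences'' of co-pendant leaves are then eigenvectors for the eigenvalue $0$. This accounts for the eigenvalue $0$ with multiplicity $\Delta_e-2$ and leaves exactly four eigenvalues to determine.

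To locate the nonzero eigenvalues, I would restrict an eigenvector to its values $(x,y,s,t)$ on $(u,\,v,\,\text{a leaf of }u,\,\text{a leaf of }v)$. The eigenequations $\lambda s=x$, $\lambda t=y$, $\lambda x=ps+y$, and $\lambda y=qt+x$ eliminate $s,t$ to give $(\lambda^2-p)(\lambda^2-q)=\lambda^2$. Setting $\mu=\lambda^2$, the nonzero eigenvalues are $\pm\sqrt{\mu_1},\pm\sqrt{\mu_2}$ with $\mu_1,\mu_2$ the roots of
\begin{equation*}
\mu^2-(\Delta_e+1)\mu+pq=0 .
\end{equation*}
Since $\mu_1+\mu_2=\Delta_e+1$ and $\mu_1\mu_2=pq>0$, and the discriminant $(\Delta_e+1)^2-4pq\ge(\Delta_e+1)^2-\Delta_e^2>0$, both roots are real and positive, so the four square roots are genuine real eigenvalues.

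Therefore $\mathcal{E}(T_1)=2(\sqrt{\mu_1}+\sqrt{\mu_2})$, and I would rewrite this using the symmetric functions as
\begin{equation*}
\mathcal{E}(T_1)=2\sqrt{(\Delta_e+1)+2\sqrt{pq}} .
\end{equation*}
For even $\Delta_e$, the choice $p=q=\Delta_e/2$ gives $\sqrt{pq}=\Delta_e/2$ and hence $\mathcal{E}(T_1)=2\sqrt{2\Delta_e+1}$. For odd $\Delta_e$, one has $pq=(\Delta_e^2-1)/4$, and the only remaining task is to match $2\sqrt{(\Delta_e+1)+\sqrt{\Delta_e^2-1}}$ with the nested-radical form: squaring $\sqrt{b+2\sqrt b}+\sqrt{b-2\sqrt b}$ for $b=2(\Delta_e+1)$ yields $2b+2\sqrt{b^2-4b}=4(\Delta_e+1)+4\sqrt{\Delta_e^2-1}$, which is exactly the square of the claimed energy. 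I expect this final radical identity to be the only delicate step, and it is dispatched cleanly by squaring both expressions and comparing.
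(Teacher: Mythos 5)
Your proof is correct. Note that the paper does not prove this lemma at all --- it is quoted verbatim from Tian and Wong \cite{F.Tian-D.Wong} as a black box --- so there is no in-paper argument to compare against; what you have written is a complete, self-contained derivation. Your identification of $T_1$ as the balanced double star $T_{\lfloor\Delta_e/2\rfloor,\lceil\Delta_e/2\rceil}$ on $\Delta_e+2$ vertices matches how the paper uses $T_1$ in Lemmas \ref{Lm4.5} and \ref{Lm4.6}, and your quotient computation is consistent with the characteristic polynomial $x^{n}-(\Delta_e+1)x^{n-2}+pq\,x^{n-4}$ that one can also read off from the formula for $P_{T_{p+1,q+1}}$ displayed in the paper's proof of Lemma \ref{Lm4.4}. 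The reduction $(\lambda^2-p)(\lambda^2-q)=\lambda^2$, the symmetric-function identity $\mathcal{E}(T_1)=2\sqrt{(\Delta_e+1)+2\sqrt{pq}}$, and the nested-radical match for odd $\Delta_e$ (both sides squaring to $4(\Delta_e+1)+4\sqrt{\Delta_e^2-1}$, with both sides nonnegative) all check out. The only cosmetic caveat is the boundary case $\Delta_e\le 1$, where $p=0$ and your assertions ``$\mu_1\mu_2=pq>0$'' and ``four nonzero eigenvalues'' fail literally (one of the $\mu_i$ vanishes); the energy formula $2(\sqrt{\mu_1}+\sqrt{\mu_2})$ and the final closed forms still hold there, so a one-line remark handling $T_1\in\{P_2,P_3\}$ separately would make the argument airtight.
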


 The maximum vertex degree, minimum edge degree, maximum edge degree and matching number of a $ \mathbb{T} $-gain graph $ \Phi=(G, \varphi) $ are defined as that of the underlying graph $ G $. Two $ \mathbb{T} $-gain graphs $ \Phi_1=(G, \varphi_1) $ and $ \Phi_2=(G, \varphi_2) $ are  \emph{switching equivalent} if there exist a unitary diagonal matrix $ U $ such that $ A(\Phi_2)=UA(\Phi_1)U^{*} $. We write $ \Phi_1 \sim \Phi_2$, if $ \Phi_1 $ and $ \Phi_2$ are switching equivalent. If $ \Phi_1 \sim \Phi_2 $ then $ \spec(\Phi_1)=\spec(\Phi_2) $. Let $ C $ be a cycle of $ n $ vertices, and the vertices are arranged as follows: $ C: v_1\sim v_2 \sim \cdots \sim v_n \sim v_1 $. Now the gain of $ \vec{C} $ is defined as $ \varphi(\vec{C})=\varphi(\vec{e}_{1,2})\varphi(\vec{e}_{2,3})\dots \varphi(\vec{e}_{n, n-1}) \varphi(\vec{e}_{n,1}) $.  For a complex number $\lambda$, let $\Re (\lambda)$ denote the real part of $\lambda$. For any cycle $ C $, instead of  $ \Re (\varphi(\vec{C}))$, we simply write $ \Re(\varphi(C)) $. A $ \mathbb{T} $-gain graph $ \Phi=(G, \varphi) $ is called \emph{balanced} if $ \varphi(C) =1$, for all cycle $ C $ in $ G $.  If $ \Phi $ is balanced, then $ \Phi \sim (G,1) $.  

Now, we recall a couple of results for the spectrum of $ \mathbb{T} $-gain graphs.

\begin{lemma}[{\cite[Corollary  3.1]{Our-paper-1}}]\label{Lm2.3}
	Let $ \Phi=(G, \varphi)$ be a $ \mathbb{T} $-gain graph with characteristic polynomial $ P_{\Phi}(x)= x^n+b_1(\Phi)x^{n-1}+b_2(\Phi)x^{n-2}+\cdots+b_n(\Phi)$. Then
	\begin{eqnarray} \label{eq5}
		b_{i}(\Phi)=\sum\limits_{H \in \mathcal{H}_{i}(G)}(-1)^{n(H)}2^{c(H)}\prod\limits_{C\in \mathcal{C}(H)}\Re(\varphi(C)),~~ \text{for}~ i=1,2, \dots, n
	\end{eqnarray}
	where $ \mathcal{H}_{i}(G) $ is the set of all elementary subgraphs with $ i $ vertices in $ G $ and $ n(H),~c(H),~\mathcal{C}(H) $ are the number of components, number of cycles and collection of cycles in $ H $, respectively.
\end{lemma}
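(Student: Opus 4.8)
The plan is to prove this via the classical Sachs-type expansion of the determinant, adapted to the Hermitian adjacency matrix $A(\Phi)$. Writing $P_{\Phi}(x)=\det(xI-A(\Phi))$ and comparing with the elementary symmetric functions of the eigenvalues, I would first record that each coefficient is a signed sum of principal minors,
\[
b_i(\Phi)=(-1)^i\sum_{\substack{S\subseteq V(G)\\ |S|=i}}\det\bigl(A(\Phi)[S]\bigr),
\]
where $A(\Phi)[S]$ is the principal submatrix on the index set $S$. The task then reduces to evaluating each $\det(A(\Phi)[S])$ and reorganising the result according to the elementary subgraphs on $S$.

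For a fixed $S$ with $|S|=i$, I would expand the minor by the Leibniz formula, $\det(A(\Phi)[S])=\sum_{\sigma}\operatorname{sgn}(\sigma)\prod_{p\in S}a_{p,\sigma(p)}$, the sum running over permutations $\sigma$ of $S$. Because $A(\Phi)$ has zero diagonal and $a_{p,q}\neq 0$ only when $v_p\sim v_q$, a permutation contributes a nonzero term exactly when it is fixed-point-free and every one of its permutation-cycles $(p_1\,p_2\,\cdots\,p_\ell)$ traces a closed walk $v_{p_1}\sim v_{p_2}\sim\cdots\sim v_{p_\ell}\sim v_{p_1}$ in $G$. A $2$-cycle $(p\,q)$ forces the edge $e_{p,q}$, while an $\ell$-cycle with $\ell\ge 3$ forces a cycle of $G$. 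Hence the surviving permutations are precisely those whose cycle partition of $S$ determines an elementary subgraph $H\in\mathcal{H}_i(G)$ spanning $S$; conversely, each such $H$ arises from several permutations, one for each independent choice of orientation of its cycle components.

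The heart of the argument is to collect, for each spanning elementary subgraph $H$, the contributions of all permutations realising it, and this uses two facts. First, the gain-graph relation $a_{p,q}a_{q,p}=\varphi(\vec{e}_{p,q})\varphi(\vec{e}_{q,p})=1$ shows that every $K_2$-component contributes the entry-product $1$, while a cycle component $C$ traversed in its two opposite orientations contributes $\varphi(\vec{C})$ and $\overline{\varphi(\vec{C})}$ respectively (inverse equals conjugate for unit-modulus gains), so summing the two orientations yields $\varphi(\vec{C})+\overline{\varphi(\vec{C})}=2\Re(\varphi(C))$. Since the $K_2$-components admit no orientation choice while the $c(H)$ cycle components may each be oriented in two ways independently, summing over all realising permutations produces the factor $2^{c(H)}\prod_{C\in\mathcal{C}(H)}\Re(\varphi(C))$. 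Second, I must track signs: a permutation-cycle of length $k$ has sign $(-1)^{k-1}$, so if $H$ has $r$ edge-components and cycle-components of lengths $\ell_1,\dots,\ell_{c(H)}$, the common sign of its realising permutations is $(-1)^{r+\sum_j(\ell_j-1)}$. Using $2r+\sum_j\ell_j=i$ and $n(H)=r+c(H)$, this exponent simplifies to $i-n(H)$, so the aggregate contribution of $H$ to $\det(A(\Phi)[S])$ is $(-1)^{i-n(H)}2^{c(H)}\prod_{C}\Re(\varphi(C))$.

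Substituting back and multiplying by the outer $(-1)^i$ cancels the internal $(-1)^i$ and leaves $(-1)^{-n(H)}=(-1)^{n(H)}$, summed over all $H\in\mathcal{H}_i(G)$, which is exactly \eqref{eq5}. The step I expect to be most delicate is the combined orientation count and sign bookkeeping: one must verify that reversing a cycle genuinely yields a distinct permutation carrying the conjugate gain (so that the factor $2^{c(H)}$ and the real part appear correctly), and that the parity identity $(-1)^{r+\sum_j(\ell_j-1)}=(-1)^{i-n(H)}$ holds uniformly. The Hermitian property is precisely what forces these conjugate pairs to combine into a real quantity, and it is the only genuinely gain-graph-specific ingredient beyond the classical coefficient theorem.
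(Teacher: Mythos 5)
Your proof is correct: the reduction of $b_i(\Phi)$ to signed principal minors, the identification of surviving Leibniz terms with elementary subgraphs, the pairing of the two orientations of each cycle component into $2\Re(\varphi(C))$ via the Hermitian relation $a_{q,p}=\overline{a_{p,q}}$, and the parity computation $(-1)^{r+\sum_j(\ell_j-1)}=(-1)^{i-n(H)}$ are all sound. The paper itself states this lemma as a citation without proof, and your argument is exactly the standard Sachs-type coefficient expansion used in the cited source, so nothing further is needed.
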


\begin{theorem} [{\cite[Lemma 4.1, Theorem 4.4]{Our-paper-1}}] \label{Th2.4}
	Let $ \Phi=(G, \varphi) $ be any $ \mathbb{T} $-gain graph on a connected graph $ G $. Then $ \rho(\Phi) \leq \rho(G)$. Equality occurs if and only if either $ \Phi $ or $ -\Phi $ is balanced.
\end{theorem}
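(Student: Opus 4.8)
The plan is to compare the Hermitian matrix $A(\Phi)$ with the nonnegative symmetric matrix $A(G)$ through the entrywise relation $|a_{p,q}| = A(G)_{p,q}$, and to extract the inequality from the extremal eigenvector together with the triangle inequality and the Rayleigh quotient for $A(G)$. Concretely, I would choose an eigenvalue $\lambda$ of $A(\Phi)$ with $|\lambda| = \rho(\Phi)$ and a unit eigenvector $x$, so that $x^{*} A(\Phi) x = \lambda$. Writing $|x|$ for the vector of entrywise moduli, the triangle inequality gives
\[
\rho(\Phi) = |x^{*} A(\Phi) x| \le \sum_{p,q} |x_p|\,|a_{p,q}|\,|x_q| = |x|^{T} A(G)\, |x|,
\]
and since $A(G)$ is symmetric and nonnegative with $\||x|\| = \|x\| = 1$, the Rayleigh bound together with Perron--Frobenius gives $|x|^{T} A(G)\,|x| \le \lambda_{\max}(A(G)) = \rho(G)$. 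Chaining these yields $\rho(\Phi) \le \rho(G)$.

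For the two easy directions of the equality, I would note that if $\Phi$ is balanced then $\Phi \sim (G,1)$, whence $\spec(\Phi) = \spec(G)$ and $\rho(\Phi) = \rho(G)$; and if $-\Phi$ is balanced then $-\Phi \sim (G,1)$, so $\spec(\Phi) = -\spec(G)$ and again $\rho(\Phi) = \rho(G)$. The substance is the forward direction, where I would exploit that equality forces both inequalities above to be tight simultaneously. Tightness of the Rayleigh step means $|x|$ is a unit eigenvector of $A(G)$ for $\lambda_{\max} = \rho(G)$; since $G$ is connected, $A(G)$ is irreducible, so the Perron eigenspace is one dimensional and spanned by a strictly positive vector, forcing $|x_p| > 0$ for every $p$.

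Tightness of the triangle-inequality step means all nonzero summands $z_{p,q} = \bar x_p\, a_{p,q}\, x_q$ share a common argument $e^{i\theta}$. Writing $x_p = |x_p| e^{i\gamma_p}$ and using $|x_p| > 0$, this forces $a_{p,q} = e^{i\theta} e^{i(\gamma_p - \gamma_q)}$ on every edge; the Hermitian constraint $a_{q,p} = \overline{a_{p,q}}$ then pins down $e^{2i\theta} = 1$, i.e. $e^{i\theta} = \pm 1$. Setting $U = \mathrm{diag}(e^{i\gamma_p})$, the case $e^{i\theta} = 1$ gives $A(\Phi) = U A(G) U^{*}$, so $\Phi \sim (G,1)$ is balanced, while the case $e^{i\theta} = -1$ gives $-A(\Phi) = U A(G) U^{*}$, so $-\Phi$ is balanced, producing exactly the claimed dichotomy.

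The main obstacle I anticipate is the forward equality analysis: correctly reading off the coboundary form $a_{p,q} = e^{i\theta} e^{i(\gamma_p - \gamma_q)}$ and then translating it into the switching statement. Two points require care: (i) connectedness is genuinely needed, via irreducibility, to guarantee $|x_p| > 0$ on \emph{all} vertices, so that the common-phase condition applies to every edge rather than only to edges with nonzero endpoint weights; and (ii) it is the Hermitian symmetry that collapses the a priori circle of admissible phases $e^{i\theta}$ down to the two values $\pm 1$. Alternatively, the inequality by itself is an instance of the general dominance fact $\rho(M) \le \rho(|M|)$ (via $|A(\Phi)^{k}| \le A(G)^{k}$ entrywise and Gelfand's formula), but the eigenvector route is preferable here because it feeds directly into the equality characterization.
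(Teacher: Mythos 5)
Your proof is correct; note that the paper itself does not prove this statement but imports it from \cite[Lemma 4.1, Theorem 4.4]{Our-paper-1}, and your argument (extremal eigenvector, triangle inequality against $|x|^{T}A(G)|x|$, Rayleigh bound, then Perron--Frobenius positivity plus the common-phase/Hermitian-symmetry analysis to force $e^{i\theta}=\pm 1$ and read off the switching) is essentially the standard proof given there. The two cautions you flag --- connectedness to get $|x_p|>0$ on all vertices, and Hermitian symmetry to collapse the phase to $\pm 1$ --- are exactly the points where care is needed, and you handle both correctly.
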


\begin{theorem}[{\cite[Theorem 4.6]{Our-paper-1}}]\label{Th2.5}
	Let $ \Phi $ be a $ \mathbb{T} $-gain graph on an underlying graph $ G $. Then $ \spec(\Phi)=\spec(G) $ if and only if $ \Phi $ is balanced.
\end{theorem}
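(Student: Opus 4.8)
The plan is to prove the two implications separately, with the forward direction (balanced $\Rightarrow$ cospectral with $G$) being immediate and the converse carrying all the content. For the easy direction, if $\Phi$ is balanced then by the remark recalled in Section~\ref{prelim} we have $\Phi \sim (G,1)$, switching equivalence preserves the spectrum, and $(G,1)$ has adjacency matrix equal to $A(G)$; hence $\spec(\Phi) = \spec((G,1)) = \spec(G)$ at once. The substance is the converse, and there I would translate the spectral hypothesis into equality of all characteristic-polynomial coefficients and then exploit the Sachs-type formula of Lemma~\ref{Lm2.3}.

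For the converse, assume $\spec(\Phi) = \spec(G)$. Equal spectra (with multiplicity) force $P_\Phi(x) = P_G(x)$, hence $b_i(\Phi) = b_i(G)$ for every $i$. Applying Lemma~\ref{Lm2.3} both to $\Phi$ and to the trivially-gained graph $(G,1)$ (for which every cycle satisfies $\Re(\varphi(C)) = 1$) and subtracting, I obtain for each $i$
\begin{equation*}
0 = b_i(G) - b_i(\Phi) = \sum_{H \in \mathcal{H}_i(G)} (-1)^{n(H)} 2^{c(H)} \left(1 - \prod_{C \in \mathcal{C}(H)} \Re(\varphi(C))\right).
\end{equation*}
The key elementary fact is that $|\varphi(C)| = 1$ gives $\Re(\varphi(C)) \le 1$, with equality if and only if $\varphi(C) = 1$; consequently each factor $1 - \prod_{C} \Re(\varphi(C))$ is nonnegative. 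Note, however, that the prefactor $(-1)^{n(H)}$ is not sign-definite, so one cannot read off the conclusion from a single coefficient — this is exactly why an induction is needed to isolate the contribution of individual cycles.

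I would then induct on cycle length, peeling off cycles one length at a time. Let $g$ be the girth of $G$. For $i < g$ no elementary subgraph contains a cycle, so those coefficients match automatically. For the base case $i = g$, the only elementary subgraphs carrying a cycle are the single $g$-cycles themselves (a $g$-cycle exhausts all $g$ of its vertices and so stands alone), and the displayed identity collapses to $-2\sum_{C}(1 - \Re(\varphi(C))) = 0$ summed over all $g$-cycles $C$; nonnegativity of each summand forces $\Re(\varphi(C)) = 1$, i.e.\ $\varphi(C) = 1$, for every shortest cycle. For the inductive step, suppose every cycle of length strictly less than $\ell$ has trivial gain. In the identity for $i = \ell$, any elementary subgraph whose cycle-components all have length $< \ell$ contributes equally to $b_\ell(\Phi)$ and $b_\ell(G)$ by the induction hypothesis, while the only elementary subgraph on $\ell$ vertices containing a cycle of length $\ell$ is that $\ell$-cycle alone. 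Hence the identity again reduces to $-2\sum_{C}(1 - \Re(\varphi(C))) = 0$ over all $\ell$-cycles $C$, and the same positivity argument yields $\varphi(C) = 1$ for every cycle of length $\ell$. Since every cycle of $G$ has some length, this shows $\Phi$ is balanced.

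The main obstacle is controlling this inductive step: a priori the coefficient $b_\ell$ mixes contributions from pure matchings, from several short cycles together with a matching, and from a single long cycle, and one must argue that only the last type survives the subtraction. The clean resolution is the structural dichotomy for an elementary subgraph $H$ on $\ell$ vertices: either $H$ contains a cycle-component of length exactly $\ell$ (which then occupies all vertices and is the whole of $H$), or every cycle-component of $H$ has length $< \ell$ (handled by the induction hypothesis). This dichotomy makes the telescoping exact and isolates precisely the $\ell$-cycles whose gains we wish to pin down, completing the proof.
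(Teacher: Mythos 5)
Your argument is correct, but note first that this statement is not proved in the present paper at all: Theorem~\ref{Th2.5} is imported verbatim from \cite[Theorem 4.6]{Our-paper-1} as a preliminary, so there is no in-paper proof to compare against. Judged on its own merits, your proof is sound and self-contained given Lemma~\ref{Lm2.3}. The forward direction via $\Phi\sim(G,1)$ is exactly right. In the converse, the crucial structural dichotomy you identify is valid: an elementary subgraph on $\ell$ vertices either is a single $\ell$-cycle (which then exhausts all $\ell$ vertices) or has all its cycle-components of length $<\ell$, so after invoking the induction hypothesis the difference $b_\ell(G)-b_\ell(\Phi)$ collapses to $-2\sum_{|C|=\ell}\bigl(1-\Re(\varphi(C))\bigr)$, a sum of terms of a single sign, and the equality-case analysis of $\Re(\varphi(C))\le 1$ with $|\varphi(C)|=1$ finishes it. You are also right that one cannot shortcut the induction, since $(-1)^{n(H)}$ is not sign-definite across all elementary subgraphs. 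A different route, available entirely from the results already recalled in Section~\ref{prelim}, would be: equal spectra give $\rho(\Phi)=\rho(G)$, so Theorem~\ref{Th2.4} forces $\Phi$ or $-\Phi$ balanced; if only $-\Phi$ is balanced then $\spec(\Phi)=-\spec(G)=\spec(G)$ forces the spectrum of $G$ to be symmetric, hence $G$ bipartite, and Theorem~\ref{Th26} then upgrades balance of $-\Phi$ to balance of $\Phi$. That route is shorter but leans on Perron--Frobenius machinery and the bipartite dichotomy; your coefficient induction is more elementary and pins down the gain of every individual cycle directly.
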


\begin{theorem}[{\cite[Theorem 4.1]{Our-paper-1}}]\label{Th2.11}
	Let $\Phi$ be a bipartite $ \mathbb{T} $-gain graph. Then, the spectrum of $ \Phi $ is symmetric about the origin.
\end{theorem}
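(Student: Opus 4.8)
The plan is to exploit the block structure that bipartiteness forces on the Hermitian matrix $A(\Phi)$ and to exhibit an explicit unitary similarity carrying $A(\Phi)$ to $-A(\Phi)$. First I would fix a bipartition $V(G)=X\sqcup Y$ with no edges inside $X$ or inside $Y$, and order the vertices so that those in $X$ come first. Since $a_{p,q}=0$ whenever $v_p$ and $v_q$ lie in the same part, the adjacency matrix acquires the block form
\[
A(\Phi)=\begin{pmatrix} 0 & B \\ B^{*} & 0 \end{pmatrix},
\]
where $B$ is the $|X|\times|Y|$ block recording the gains on edges joining the two parts; Hermiticity of $A(\Phi)$ is precisely the statement that the lower-left block equals $B^{*}$.

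Next I would introduce the diagonal signature matrix $D=\mathrm{diag}(I_{|X|},-I_{|Y|})$, a real unitary diagonal matrix with $D=D^{*}=D^{-1}$. A direct block multiplication yields
\[
D\,A(\Phi)\,D^{-1}=\begin{pmatrix} 0 & -B \\ -B^{*} & 0 \end{pmatrix}=-A(\Phi).
\]
Hence $A(\Phi)$ and $-A(\Phi)$ are unitarily similar; in the language recalled in Section \ref{prelim}, $\Phi$ and $-\Phi$ are switching equivalent, so they share the same spectrum. Since $\spec(-A(\Phi))=\{-\lambda:\lambda\in\spec(\Phi)\}$, I conclude $\spec(\Phi)=-\spec(\Phi)$, that is, the eigenvalues occur in pairs $\pm\lambda$ of equal multiplicity and the spectrum is symmetric about the origin.

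There is essentially no deep obstacle here; the only point requiring care is correct bookkeeping of the Hermitian (rather than merely symmetric) structure, namely checking that conjugation by the real matrix $D$ sends $B\mapsto -B$ and $B^{*}\mapsto -B^{*}$ simultaneously, so that the output is again Hermitian and equals $-A(\Phi)$ exactly. As an alternative route that avoids the block computation, one could invoke the coefficient formula of Lemma \ref{Lm2.3}: in a bipartite graph every cycle has even length, so every elementary subgraph (a disjoint union of edges and cycles) covers an even number of vertices. Thus $\mathcal{H}_i(G)=\emptyset$ and hence $b_i(\Phi)=0$ for every odd $i$, so $P_{\Phi}(x)$ involves only monomials $x^{n-i}$ with $i$ even. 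This gives $P_{\Phi}(-x)=(-1)^{n}P_{\Phi}(x)$, which again forces the roots to be symmetric about $0$.
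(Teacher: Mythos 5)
Your proof is correct. The paper only quotes this result (as Theorem 4.1 of \cite{Our-paper-1}) and gives no proof of its own, so there is nothing in-text to compare against; your argument --- ordering the vertices by the bipartition so that $A(\Phi)=\bigl(\begin{smallmatrix}0 & B\\ B^{*} & 0\end{smallmatrix}\bigr)$ and conjugating by the signature matrix $D=\mathrm{diag}(I_{|X|},-I_{|Y|})$ to obtain $-A(\Phi)$ --- is the standard proof of spectral symmetry for bipartite gain graphs, and it correctly handles the Hermitian structure. Your alternative route via Lemma \ref{Lm2.3} (bipartite graphs have no odd elementary subgraphs, so $b_i(\Phi)=0$ for odd $i$ and $P_{\Phi}(-x)=(-1)^nP_{\Phi}(x)$) is also valid and properly accounts for multiplicities.
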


\begin{theorem}[{\cite[Theorem 4.5]{Our-paper-1}}] \label{Th26}
	If $ \Phi=(G, \varphi) $ be a connected $ \mathbb{T} $-gain graph, then
	\begin{itemize}
		\item[(1)] If $ G $ is bipartite, then $ \Phi $ is balanced implies $ -\Phi $ is balanced.
		\item[(2)] If $ \Phi $ is balanced implies $ -\Phi $ is balanced for some gain, then $ G $ is bipartite.
	\end{itemize}
\end{theorem}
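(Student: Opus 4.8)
The plan is to reduce both parts to a single sign identity describing the gain of a cycle under negation of all edge gains. Write $-\Phi=(G,-\varphi)$ for the $\mathbb{T}$-gain graph with adjacency matrix $-A(\Phi)$; its edge gains are $(-\varphi)(\vec e_{p,q})=-\varphi(\vec e_{p,q})$, and these still lie in $\mathbb{T}$ and satisfy the required inverse relation $(-\varphi)(\vec e_{p,q})=\overline{(-\varphi)(\vec e_{q,p})}$. First I would record that if $C$ is any cycle of length $\ell(C)$, then, since the cycle gain $\varphi(C)$ is a product of exactly $\ell(C)$ edge gains, negating each factor yields
\[
(-\varphi)(C)=(-1)^{\ell(C)}\,\varphi(C).
\]
This elementary observation, combined with the definition that $\Phi$ is balanced precisely when $\varphi(C)=1$ for every cycle $C$, is the only ingredient the argument requires.

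For part (1), suppose $G$ is bipartite, so every cycle $C$ has even length and hence $(-1)^{\ell(C)}=1$. If $\Phi$ is balanced then $\varphi(C)=1$ for all $C$, and the identity gives $(-\varphi)(C)=(-1)^{\ell(C)}\varphi(C)=1$ for all $C$; thus $-\Phi$ is balanced.

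For part (2), I read the hypothesis as asserting the existence of a gain $\varphi$ for which $\Phi$ is balanced and $-\Phi$ is balanced as well (any weaker reading makes the implication vacuous, since the trivial gain $\varphi\equiv 1$ already makes $\Phi$ balanced on every $G$). Balance of $\Phi$ gives $\varphi(C)=1$ and balance of $-\Phi$ gives $(-\varphi)(C)=1$ for every cycle $C$; substituting the identity yields $(-1)^{\ell(C)}=1$, so every cycle of $G$ has even length. A graph with no odd cycle is bipartite, which is the desired conclusion.

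The proof is essentially routine, and I do not anticipate a genuine obstacle: the two things to get right are the sign bookkeeping in the cycle-gain identity — that negating all $\ell(C)$ edge gains multiplies the cycle gain by $(-1)^{\ell(C)}$ — and the classical characterization of bipartiteness by the absence of odd cycles. The only subtle point is fixing the correct interpretation of statement (2), after which both implications follow in one line each.
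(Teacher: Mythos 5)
Your argument is correct: the identity $(-\varphi)(C)=(-1)^{\ell(C)}\varphi(C)$ together with the definition of balance gives both implications, and your reading of part (2) is the only non-vacuous one. Note that this theorem is only quoted in the present paper as a preliminary (it is Theorem 4.5 of the cited reference \cite{Our-paper-1}), so there is no in-paper proof to compare against; your proof is the standard one for this fact, and the sign bookkeeping and the appeal to the odd-cycle characterization of bipartiteness are both handled correctly.
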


\begin{theorem}[{\cite[Theorem 4.2]{our-paper3}}]\label{Th2.7}
	Let $ \Phi=(G, \varphi) $ be any $ \mathbb{T} $-gain graph on a connected bipartite graph $ G $. Then $ \Phi $ has exactly one positive eigenvalue if and only if $ \Phi $ is a balanced complete bipartite graph.
\end{theorem}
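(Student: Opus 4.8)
The plan is to exploit the bipartite block structure of the Hermitian matrix $A(\Phi)$ together with the spectral symmetry guaranteed by Theorem~\ref{Th2.11}. Fix a bipartition $V(G)=X\cup Y$ and order the vertices of $X$ first, so that
$$A(\Phi)=\begin{pmatrix} 0 & B \\ B^{*} & 0 \end{pmatrix},$$
where $B$ is the $|X|\times|Y|$ block whose entries lie in $\mathbb{T}\cup\{0\}$. The nonzero eigenvalues of $A(\Phi)$ are precisely the numbers $\pm\sigma$ as $\sigma$ runs over the singular values of $B$, so the number of positive eigenvalues of $\Phi$ equals $\operatorname{rank}(B)$, and $A(\Phi)$ has rank $2\operatorname{rank}(B)$. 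By Theorem~\ref{Th2.11} the spectrum of $\Phi$ is symmetric about the origin, hence ``$\Phi$ has exactly one positive eigenvalue'' is equivalent to $\spec(\Phi)=\{\lambda,0,\dots,0,-\lambda\}$ with $\lambda>0$, i.e.\ to the single condition $\operatorname{rank}(B)=1$. Reducing the theorem to this rank-one condition on $B$ is the conceptual heart of the argument.

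For the ``if'' direction, suppose $\Phi$ is a balanced complete bipartite graph on $K_{m,n}$. By Theorem~\ref{Th2.5} we have $\spec(\Phi)=\spec(K_{m,n})=\{\sqrt{mn},-\sqrt{mn},0^{(m+n-2)}\}$, which contains exactly one positive eigenvalue, so this direction is immediate.

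For the ``only if'' direction I would proceed as follows. Assuming exactly one positive eigenvalue, the discussion above gives $\operatorname{rank}(B)=1$, so I can factor $B=uv^{*}$ with $u\in\mathbb{C}^{|X|}$, $v\in\mathbb{C}^{|Y|}$; equivalently $B_{ij}=u_i\overline{v_j}$. Since $G$ is connected and $\Phi$ has a positive eigenvalue, $G$ has at least one edge and hence no isolated vertex, which forces every $u_i\neq 0$ and every $v_j\neq 0$; consequently $B_{ij}\neq 0$ for all $i,j$, so every vertex of $X$ is adjacent to every vertex of $Y$ and $G=K_{|X|,|Y|}$ is complete bipartite. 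Because each nonzero entry of $B$ has unit modulus, $|u_i|\,|v_j|=1$ for all $i,j$, so $|u_i|$ and $|v_j|$ are constants with product $1$; writing $u_i=|u_i|e^{\mathrm{i}\theta_i}$ and $v_j=|v_j|e^{\mathrm{i}\psi_j}$, we get $B_{ij}=e^{\mathrm{i}(\theta_i-\psi_j)}$. Switching by the diagonal unitary $U=\operatorname{diag}(e^{-\mathrm{i}\theta_1},\dots,e^{-\mathrm{i}\theta_{|X|}},\,e^{-\mathrm{i}\psi_1},\dots,e^{-\mathrm{i}\psi_{|Y|}})$ turns every entry of $B$ into $1$, so $\Phi\sim(K_{m,n},1)$ and therefore $\Phi$ is balanced, completing the proof.

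The main obstacle is the passage from the purely spectral hypothesis to the rank-one factorization together with its graph-theoretic consequences: one must combine the spectral symmetry of Theorem~\ref{Th2.11} with the singular-value description of the off-diagonal block $B$ to extract $\operatorname{rank}(B)=1$, and then argue that connectedness upgrades the rank-one support of $B$ into the full edge set of a complete bipartite graph while the unit-modulus constraint makes the factorization switching-equivalent to the all-ones gain. Once these structural points are secured, the remaining ingredients (the explicit switching and the spectrum of $K_{m,n}$) are routine.
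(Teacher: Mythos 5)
Your argument is correct and self-contained. Note first that the paper does not prove this statement at all: Theorem~\ref{Th2.7} is imported verbatim from \cite[Theorem 4.2]{our-paper3}, so there is no in-paper proof to compare against. Judged on its own, your proof is sound: for a bipartite gain graph the eigenvalues of $\begin{pmatrix} 0 & B \\ B^{*} & 0\end{pmatrix}$ are indeed $\pm\sigma$ over the singular values $\sigma$ of $B$ (padded with zeros), so ``exactly one positive eigenvalue'' is equivalent to $\operatorname{rank}(B)=1$; the invocation of Theorem~\ref{Th2.11} is therefore redundant, since the singular-value description already yields the spectral symmetry. The rank-one factorization $B=uv^{*}$, the deduction that connectedness forbids zero rows and columns (hence all $u_i, v_j\neq 0$ and $G=K_{|X|,|Y|}$), the constancy of $|u_i|$ and $|v_j|$ forced by the unit-modulus entries, and the explicit diagonal switching to $(K_{m,n},1)$ are all correct; the converse via Theorem~\ref{Th2.5} is immediate. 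One small point worth making explicit: you should note that $\Phi\sim(K_{m,n},1)$ implies $\Phi$ is balanced because switching preserves cycle gains --- the paper only records the implication in the other direction (``if $\Phi$ is balanced, then $\Phi\sim(G,1)$''), so this converse step deserves a sentence. With that sentence added, the proof is complete and arguably more transparent than an appeal to the external reference.
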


Arizmendi et al. \cite{Vertexenergy1} introduced the notion of the energy of a vertex for an undirected graph. Later,  in \cite{our-paper3},  we extended this notion for $ \mathbb{T} $-gain graphs. Let $ v_i \in V(\Phi) $. Then the \emph{energy of the vertex} $ v_i $ in $ \Phi $ is defined by $\mathcal{E}_{\Phi}(v_i):=|A(\Phi)|_{ii} $, where $ |A(\Phi)|_{ii}$ is the $ (i,i)th $ entry of the matrix $(A(\Phi)A(\Phi)^{*})^{\frac{1}{2}} $. Note that, $ \mathcal{E}(\Phi)=\sum\limits_{i=1}^{n}\mathcal{E}_{\Phi}(v_i) $.

\begin{lemma} [{\cite[Lemma 3.1]{our-paper3}}]\label{Lm2.4}
	Let $ \Phi $ be a $ \mathbb{T} $-gain graph with vertex set $\{ v_1, v_2, \cdots,  v_n\}  $. Then
	\begin{equation*}
		\mathcal{E}_{\Phi}(v_i)=\sum\limits_{j=1}^{n}P_{ij}|\lambda_j|, \text{  for } i=1,2, \dots n,
	\end{equation*}
	where $ P_{ij}=|q_{ij}|^{2} $ and $ Q=(q_{ij}) $ is the unitary matrix whose columns are the eigenvectors of $ A(\Phi) $ and $ \lambda_j $ is the $ j$-th  eigenvalue of $A(\Phi) $.	  	
\end{lemma}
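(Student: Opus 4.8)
The plan is to exploit the Hermitian structure of $A(\Phi)$ together with the spectral theorem. Since $A(\Phi)$ is Hermitian, we have $A(\Phi)^*=A(\Phi)$, so the matrix defining the vertex energy simplifies to $\big(A(\Phi)A(\Phi)^*\big)^{1/2}=\big(A(\Phi)^2\big)^{1/2}$. By the spectral theorem I would write $A(\Phi)=Q\Lambda Q^*$, where $\Lambda=\mathrm{diag}(\lambda_1,\dots,\lambda_n)$ collects the (real) eigenvalues and $Q=(q_{ij})$ is the unitary matrix whose columns are the corresponding orthonormal eigenvectors. This is exactly the matrix $Q$ appearing in the statement.

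Next I would identify the positive semidefinite square root explicitly. Squaring gives $A(\Phi)^2=Q\Lambda^2 Q^*$. Set $M:=Q\,|\Lambda|\,Q^*$ with $|\Lambda|=\mathrm{diag}(|\lambda_1|,\dots,|\lambda_n|)$. Then $M$ is positive semidefinite, being unitarily similar to a nonnegative diagonal matrix, and $M^2=Q\,|\Lambda|^2\,Q^*=Q\Lambda^2 Q^*=A(\Phi)^2$. By uniqueness of the positive semidefinite square root of a positive semidefinite matrix, $M=\big(A(\Phi)^2\big)^{1/2}=\big(A(\Phi)A(\Phi)^*\big)^{1/2}$.

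Finally I would read off the diagonal entries. Using $(Q^*)_{ji}=\overline{q_{ij}}$, the $(i,i)$ entry of $M$ is $M_{ii}=\sum_{j=1}^{n}q_{ij}\,|\lambda_j|\,\overline{q_{ij}}=\sum_{j=1}^{n}|q_{ij}|^{2}|\lambda_j|$. By definition $\mathcal{E}_{\Phi}(v_i)=|A(\Phi)|_{ii}=M_{ii}$, and writing $P_{ij}=|q_{ij}|^{2}$ yields precisely $\mathcal{E}_{\Phi}(v_i)=\sum_{j=1}^{n}P_{ij}|\lambda_j|$, as claimed.

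There is essentially no serious obstacle here: the only point requiring a little care is justifying that the functional-calculus square root coincides with $Q\,|\Lambda|\,Q^*$, which is handled by invoking uniqueness of the positive square root. Everything else is a direct computation on the diagonal entries, made transparent by the Hermitian spectral decomposition.
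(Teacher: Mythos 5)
Your proof is correct: the reduction $\bigl(A(\Phi)A(\Phi)^*\bigr)^{1/2}=\bigl(A(\Phi)^2\bigr)^{1/2}=Q\,|\Lambda|\,Q^*$ via uniqueness of the positive semidefinite square root, followed by reading off the diagonal entries, is exactly the standard argument. The paper itself does not reprove this lemma (it is imported from the authors' earlier work), but your spectral-decomposition derivation is the same one used there and has no gaps.
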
  	
The following integral representation will be useful. 
\begin{theorem}[{\label{Th2.9}\cite[Theorem 1]{Miodrag-Bozin-Gutman}}]
	Let $ X(t) $ be a polynomial of degree $ n $ with leading coefficient $ 1 $ and $ \lambda_1, \lambda_2, \dots, \lambda_n $ be the roots of $ X(t) $. Then 
	\begin{eqnarray}\label{eq8}
		\sum\limits_{j}|\Re(\lambda_j)|=\frac{1}{\pi}\int_{-\infty}^{\infty}\frac{1}{t^{2}}\log \left|t^{n}X\left(\frac{i}{t}\right)\right|dt.
	\end{eqnarray}
\end{theorem}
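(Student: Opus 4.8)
The plan is to linearize the logarithm by factoring and then reduce the identity \eqref{eq8} to a one-variable integral that can be evaluated by differentiating under the integral sign. Writing $X(t)=\prod_{j=1}^{n}(t-\lambda_j)$ and replacing $t$ by $i/t$ gives $t^{n}X(i/t)=\prod_{j=1}^{n}(i-\lambda_j t)$, so that $\log\bigl|t^{n}X(i/t)\bigr|=\sum_{j=1}^{n}\log|i-\lambda_j t|$. Since the right-hand side of \eqref{eq8} is linear in this sum, it suffices to establish, for a single root $\lambda=a+bi$ with $a=\Re(\lambda)$, the scalar identity
\begin{equation*}
\frac{1}{\pi}\int_{-\infty}^{\infty}\frac{1}{t^{2}}\log|i-\lambda t|\,dt=|a|,
\end{equation*}
the integral being read as a principal value at $t=0$. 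Here $|i-\lambda t|^{2}=a^{2}t^{2}+(1-bt)^{2}$, so near $t=0$ the integrand behaves like $-b/t$, whose symmetric singularity cancels in the principal value, while at $t=\pm\infty$ it is $O\bigl(t^{-2}\log|t|\bigr)$ and hence integrable.

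Next I would regard $a$ as a parameter and set $F(a)=\tfrac{1}{2\pi}\int_{-\infty}^{\infty}t^{-2}\log\bigl(a^{2}t^{2}+(1-bt)^{2}\bigr)\,dt$, the goal being $F(a)=|a|$. Differentiating under the integral sign cancels the factor $t^{-2}$ and gives, for $a\neq 0$,
\begin{equation*}
F'(a)=\frac{a}{\pi}\int_{-\infty}^{\infty}\frac{dt}{|\lambda|^{2}t^{2}-2bt+1}.
\end{equation*}
The quadratic in the denominator has discriminant $4b^{2}-4|\lambda|^{2}=-4a^{2}<0$, so the elementary formula $\int_{-\infty}^{\infty}(\alpha t^{2}+\beta t+\gamma)^{-1}\,dt=2\pi/\sqrt{4\alpha\gamma-\beta^{2}}$ yields $\int_{-\infty}^{\infty}(|\lambda|^{2}t^{2}-2bt+1)^{-1}\,dt=\pi/|a|$, whence $F'(a)=\operatorname{sign}(a)$ and, since $F$ is even in $a$, $F(a)=|a|+F(0)$. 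To pin down the constant I would compute $F(0)$: at $a=0$ one has $|i-\lambda t|=|1-bt|$, and the scaling $t\mapsto t/b$ reduces $F(0)$ to $\tfrac{b}{\pi}\,\mathrm{PV}\!\int_{-\infty}^{\infty}t^{-2}\log|1-t|\,dt$. Integrating this by parts with $g=-1/t$ as antiderivative of $t^{-2}$ produces singular boundary contributions at $t=0$ that are logarithmic in the cutoff $\varepsilon$ but of opposite sign on the two half-lines, so they cancel in the principal-value limit and leave $F(0)=0$. Summing over $j$ then reproduces $\sum_{j}|\Re(\lambda_j)|$ on the left-hand side.

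The main obstacle will be the analytic bookkeeping at $t=0$, where the factor $t^{-2}$ makes the integrand non-integrable term by term before the $-b/t$ part cancels. I would make this rigorous by working on the truncated domain $\varepsilon\le|t|\le R$, where the integrand is smooth and every manipulation---differentiation in $a$, the by-parts computation, and the rational-integral evaluation---is classically justified, and only afterwards let $\varepsilon\to0$ and $R\to\infty$, checking that the odd $-b/t$ contribution and the $\log\varepsilon$ boundary terms vanish in the limit. Once this is in place, the evaluation of the rational integral and the vanishing of $F(0)$ are routine, and the representation follows.
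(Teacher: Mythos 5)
The paper does not prove this statement; it is quoted verbatim from the cited reference (Mateljevi\'c--Bo\v{z}in--Gutman, Theorem 1) and used as a black box, so there is no in-paper proof to compare against. Judged on its own, your argument is sound and is essentially the standard route to the Coulson-type formula. The reduction $t^{n}X(i/t)=\prod_{j}(i-\lambda_j t)$ and the linearization of the logarithm are correct; the per-root identity $\tfrac{1}{\pi}\,\mathrm{PV}\!\int t^{-2}\log|i-\lambda t|\,dt=|\Re\lambda|$ is the right target; and your computations check out: $a^{2}t^{2}+(1-bt)^{2}=|\lambda|^{2}t^{2}-2bt+1$ has discriminant $-4a^{2}$, giving $F'(a)=\operatorname{sign}(a)$, and the antiderivative $\log|1-t|\,(1-1/t)-\log|t|$ shows the boundary terms at $\pm\varepsilon$ cancel so that $F(0)=0$. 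Two small points you should make explicit when writing this up. First, concluding $F(a)=|a|+F(0)$ from $F'=\operatorname{sign}(a)$ on $a\neq 0$ requires continuity of $F$ at $a=0$; this holds (dominated convergence away from the zero of $1-bt$, plus the integrability of the logarithmic singularity there), but it is not automatic from evenness alone. Second, the theorem as stated does not mention a principal value; for a single root the PV is genuinely needed because of the $-b/t$ term, whereas in the paper's application $X=P_{\Phi}$ has real coefficients, so $\sum_j\Im(\lambda_j)=0$ and the singularities cancel across roots --- worth a remark so the scope of \eqref{eq8} is clear. With the $\varepsilon,R$-truncation you describe, the proof is complete.
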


Next, we recall a couple of results from matrix theory, which will be used in the proofs of the main results.

\begin{theorem}[{\cite[Corollary 2.4]{Day-So}}]\label{Th2.10}
	Let $X= \begin{pmatrix}
		Y & A\\
		B& Z
	\end{pmatrix}$ be a partition matrix such that $ Y $ and $ Z $ are the square matrices. Then $ \mathcal{E}(X)\geq \mathcal{E}(Y) $. Equality holds if and only if $ A, B $, and $ Z $ are zero matrices.
\end{theorem}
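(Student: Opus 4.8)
The plan is to read $\mathcal{E}(\cdot)$ here as the sum of singular values, i.e. the trace norm $\mathcal{E}(M)=\sum_i \sigma_i(M)$: since $X$ need not be Hermitian even though $Y$ and $Z$ are square, this is the correct reading of the cited Day--So statement, and it coincides with the eigenvalue definition used for gain graphs in the Hermitian case. The central idea is that $Y$ is obtained from $X$ by first deleting the block column $\left(\begin{smallmatrix} A\\ Z\end{smallmatrix}\right)$ and then deleting the block row $B$, so I would reduce the assertion to a single ``deletion principle'' and track energy at each step.

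First I would record the one-step principle: for a matrix $M=[\,P\mid Q\,]$ with $P$ and $Q$ sharing the same rows, one has $MM^{*}=PP^{*}+QQ^{*}$, so the squared singular values of $M$ are the eigenvalues of $PP^{*}+QQ^{*}$. Because $QQ^{*}\succeq 0$, the Loewner inequality $MM^{*}\succeq PP^{*}$ together with Weyl's monotonicity (equivalently Courant--Fischer) gives $\lambda_i(PP^{*}+QQ^{*})\ge \lambda_i(PP^{*})\ge 0$ for every $i$, hence $\sigma_i(M)\ge \sigma_i(P)$ termwise and therefore $\mathcal{E}(M)\ge \mathcal{E}(P)$. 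The analogous statement for row deletion, $M=\left(\begin{smallmatrix} P\\ Q\end{smallmatrix}\right)$, follows identically using $M^{*}M=P^{*}P+Q^{*}Q$.

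Applying column deletion to $X=\bigl[\,M_1\mid M_2\,\bigr]$ with $M_1=\left(\begin{smallmatrix} Y\\ B\end{smallmatrix}\right)$ and $M_2=\left(\begin{smallmatrix} A\\ Z\end{smallmatrix}\right)$ yields $\mathcal{E}(X)\ge \mathcal{E}(M_1)$, and applying row deletion to $M_1$ yields $\mathcal{E}(M_1)\ge \mathcal{E}(Y)$; chaining these gives $\mathcal{E}(X)\ge \mathcal{E}(Y)$. The ``if'' direction of the equality claim is immediate: when $A=B=Z=0$ the nonzero singular values of $X$ are exactly those of $Y$.

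The part I expect to require the real work is the ``only if'' direction, which hinges on the equality condition of the deletion principle. In $\mathcal{E}(M)=\mathcal{E}(P)$ for $M=[\,P\mid Q\,]$, the inequalities $\sigma_i(M)\ge \sigma_i(P)\ge 0$ hold termwise, so equality of the sums forces $\sigma_i(M)=\sigma_i(P)$ for all $i$, i.e. $\lambda_i(PP^{*}+QQ^{*})=\lambda_i(PP^{*})$ for all $i$; summing, $\tr(QQ^{*})=0$, which forces $Q=0$. Now if $\mathcal{E}(X)=\mathcal{E}(Y)$, both inequalities in the chain $\mathcal{E}(X)\ge \mathcal{E}(M_1)\ge \mathcal{E}(Y)$ must be equalities: the column-deletion step then forces the deleted block $M_2=\left(\begin{smallmatrix} A\\ Z\end{smallmatrix}\right)=0$, hence $A=Z=0$, and the row-deletion step forces the deleted block $B=0$. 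This is exactly the desired characterization, so the main obstacle is really the careful bookkeeping that both deletion inequalities are simultaneously tight and that termwise tightness propagates to the trace condition.
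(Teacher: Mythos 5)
This statement is quoted from Day--So \cite{Day-So} and the paper supplies no proof of its own, so there is nothing internal to compare against; I can only judge your argument on its merits, and it is correct and complete. Your reading of $\mathcal{E}$ as the trace norm (sum of singular values) is the right one for a possibly non-Hermitian block matrix, and it is consistent with how the paper uses the result. The two-step reduction is sound: for $M=[\,P\mid Q\,]$ you have $MM^{*}=PP^{*}+QQ^{*}\succeq PP^{*}$, Weyl monotonicity gives $\sigma_i(M)\ge\sigma_i(P)$ termwise (both families read off from the $m\times m$ Gram matrices, so the index ranges match), and hence $\mathcal{E}(M)\ge\mathcal{E}(P)$; the row-deletion analogue and the chaining through $M_1=\bigl(\begin{smallmatrix}Y\\ B\end{smallmatrix}\bigr)$ are equally clean. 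The equality analysis is the right one: termwise domination plus equality of sums forces $\lambda_i(PP^{*}+QQ^{*})=\lambda_i(PP^{*})$ for every $i$, taking traces gives $\operatorname{trace}(QQ^{*})=\lVert Q\rVert_F^2=0$, and applying this at both deletion steps kills $A$, $Z$, and $B$. This is arguably more elementary than deriving the corollary from the subadditivity of the trace norm (the route suggested by the surrounding results in Day--So, cf.\ Theorem \ref{Th2.8}), since that decomposition $X=\bigl(\begin{smallmatrix}Y&0\\0&0\end{smallmatrix}\bigr)+\bigl(\begin{smallmatrix}0&A\\B&Z\end{smallmatrix}\bigr)$ does not by itself yield the stated equality characterization.
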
	

\begin{theorem}[{\cite{Cheng-Horn-Li}}]\label{Th2.8}
	Let $ C,~C_1$ and $C_2$ be three square complex matrices of order $ n $ such that \break $ C=C_1+C_2 $. If $ S_j(\cdot{}) $ is the $j$-th singular value of corresponding matrix, then \break $ \sum\limits_{p}S_{p}(C)\leq \sum\limits_{p}S_{p}(C_1)+\sum\limits_{p}S_{p}(C_2)$.
\end{theorem}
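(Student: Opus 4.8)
The plan is to recognize that $\sum_p S_p(C)$ is exactly the \emph{nuclear norm} (trace norm) $\|C\|_\ast := \tr\big((C^\ast C)^{1/2}\big)$ of $C$, and that the claimed inequality is precisely the triangle inequality $\|C_1+C_2\|_\ast \le \|C_1\|_\ast + \|C_2\|_\ast$ for this norm. Rather than invoke that $\|\cdot\|_\ast$ is a norm, I would prove the inequality directly through a variational (dual) characterization of the sum of singular values, which makes the subadditivity fall out from linearity of the trace.

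The key lemma I would establish first is that for \emph{every} $n\times n$ complex matrix $M$ one has $|\tr(M)| \le \sum_p S_p(M)$. To see this, take a polar decomposition $M = U_0 P$ with $U_0$ unitary and $P = (M^\ast M)^{1/2}$ positive semidefinite, and diagonalize $P = W D W^\ast$ with $D = \mathrm{diag}(S_1(M),\dots,S_n(M))$. Then
\[ \tr(M) = \tr(U_0 W D W^\ast) = \tr(W^\ast U_0 W D) = \sum_p S_p(M)\,(W^\ast U_0 W)_{pp}. \]
Since $W^\ast U_0 W$ is unitary, each of its diagonal entries has modulus at most $1$, whence $|\tr(M)| \le \sum_p S_p(M)\,|(W^\ast U_0 W)_{pp}| \le \sum_p S_p(M)$.

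Next I would upgrade this to the variational identity $\sum_p S_p(C) = \max_{U \text{ unitary}} \Re\,\tr(CU)$. The inequality ``$\ge$'' follows by applying the key lemma to $M = CU$ together with the unitary invariance $S_p(CU) = S_p(C)$, giving $\Re\,\tr(CU) \le |\tr(CU)| \le \sum_p S_p(C)$ for every unitary $U$; equality is attained by writing a singular value decomposition $C = V\Sigma W^\ast$ and choosing $U = W V^\ast$, so that $CU = V\Sigma V^\ast$ and $\tr(CU) = \tr(\Sigma) = \sum_p S_p(C)$. With this identity in hand the theorem is immediate: picking a unitary $U_\star$ attaining the maximum for $C = C_1 + C_2$, linearity of the trace and the bound $\Re\,\tr(C_j U_\star) \le |\tr(C_j U_\star)| \le \sum_p S_p(C_j)$ (again the key lemma, applied to $M = C_j U_\star$) yield
\[ \sum_p S_p(C) = \Re\,\tr(C_1 U_\star) + \Re\,\tr(C_2 U_\star) \le \sum_p S_p(C_1) + \sum_p S_p(C_2). \]

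The only real obstacle is the key lemma $|\tr(M)| \le \sum_p S_p(M)$; everything after it is bookkeeping. The polar-decomposition argument above sidesteps the heavier machinery (von Neumann's trace inequality, or majorization of eigenvalues by singular values), reducing the whole statement to the elementary fact that the diagonal entries of a unitary matrix have modulus at most one.
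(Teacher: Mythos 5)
Your argument is correct. Note, however, that the paper does not prove this statement at all: Theorem~\ref{Th2.8} is imported verbatim from the cited reference and used as a black box, so there is no in-paper proof to compare against. What you have written is a clean, self-contained derivation of the triangle inequality for the trace (nuclear) norm via the dual characterization $\sum_p S_p(C)=\max_{U\ \text{unitary}}\Re\,\tr(CU)$. Each step checks out: the polar decomposition $M=U_0P$ with $U_0$ unitary exists for every square matrix (including singular ones, where $U_0$ is merely non-unique); the diagonal entries of the unitary matrix $W^\ast U_0W$ indeed have modulus at most $1$, giving the key bound $|\tr(M)|\le\sum_pS_p(M)$; unitary invariance of singular values gives the ``$\ge$'' half of the variational identity, and the choice $U=WV^\ast$ from an SVD $C=V\Sigma W^\ast$ attains equality; and subadditivity then follows from linearity of the trace. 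This is essentially the textbook route (a special case of von Neumann's trace inequality, obtained here by elementary means), and it is entirely adequate as a proof of the cited theorem.
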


\section{Lower bounds for energy of gain graphs in terms of minimum vertex degree }\label{lower_bound}
In \cite{Xiaobin-Ma}, Ma established that $\mathcal{E}(G)\geq 2\delta(G)$ for a connected undirected graph $G$ and studied the classes of graphs for which the equality hold, where $ \delta(G) $ is the minimum vertex degree. First, we note that this bound need not be true for arbitrary connected $\mathbb{T}$-gain graphs. 
\begin{example}\label{ex2}{\em
		Consider $ \Phi=(K_3, \varphi) $, where $K_3$ denotes the complete graph on $3$ vertices, with the gains are given by  $
		\varphi(\vec{e}_{1,2})=\varphi(\vec{e}_{2,3})=\varphi(\vec{e}_{3,1})=i$. Then $A(\Phi)= \begin{pmatrix}
			0 & i & -i\\
			-i& 0 & i\\
			i & -i&0
		\end{pmatrix}$. Then $ \spec(\Phi)=\{ -\sqrt{3}, 0, \sqrt{3}\} $. Thus $ \mathcal{E}(\Phi)=2\sqrt{3}\approx 3.464 < 2\delta(\Phi) $.} 
\end{example}

Surprisingly, the counterpart of the result of Ma holds for the triangle-free connected $ \mathbb{T} $-gain graphs,  which is the main result of this section (Theorem \ref{Th3.3}). We shall prove this result with the aid of the following auxiliary results. For a gain graph $\Phi = (G, \varphi)$,  we first establish a lower bound for the energy of a vertex in terms of the vertex degrees and the spectral radius of the graph $G$.
\begin{theorem}\label{Th3.1}
	Let $ \Phi=(G, \varphi) $ be a connected $ \mathbb{T} $-gain graph with at least one edge. Then
	\begin{equation}\label{main-res-1}
		\mathcal{E}_{\Phi}(v_i)\geq \frac{d(v_i)}{\rho(G)},\qquad\text{for all } v_i \in V(G).
	\end{equation}
	Further, equality holds in (\ref{main-res-1}) if and only if $ \Phi $ is balanced and $G$ is complete bipartite.
\end{theorem}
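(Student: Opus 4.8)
The plan is to derive the inequality from a weighted estimate on the eigenvalues of $A(\Phi)$ and to read off the equality case from when that estimate is tight. Since $A(\Phi)$ is Hermitian, $(A(\Phi)A(\Phi)^{*})^{1/2}=(A(\Phi)^2)^{1/2}$, and Lemma~\ref{Lm2.4} writes the vertex energy as a convex combination $\mathcal{E}_{\Phi}(v_i)=\sum_{j}P_{ij}|\lambda_j|$, where $P_{ij}=|q_{ij}|^2\ge 0$ and $\sum_j P_{ij}=1$. The key observation is that the same weights reproduce the $i$-th diagonal entry of $A(\Phi)^2$: on one hand $(A(\Phi)^2)_{ii}=\sum_j P_{ij}\lambda_j^2$, and on the other hand $(A(\Phi)^2)_{ii}=\sum_k |a_{ik}|^2=d(v_i)$, because each nonzero entry of $A(\Phi)$ is a unit complex number. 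By Theorem~\ref{Th2.4} every eigenvalue satisfies $|\lambda_j|\le\rho(\Phi)\le\rho(G)$, whence $\lambda_j^2\le\rho(G)\,|\lambda_j|$. Multiplying by $P_{ij}\ge 0$ and summing over $j$ gives $d(v_i)=\sum_j P_{ij}\lambda_j^2\le \rho(G)\sum_j P_{ij}|\lambda_j|=\rho(G)\,\mathcal{E}_{\Phi}(v_i)$, which is exactly (\ref{main-res-1}).

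For the equality analysis I read (\ref{main-res-1}) as an equality at every vertex. Then every summand of $\sum_j P_{ij}\bigl(\rho(G)|\lambda_j|-\lambda_j^2\bigr)=0$ must vanish. If there were an eigenvalue with $0<|\lambda_j|<\rho(G)$, the factor $\rho(G)|\lambda_j|-\lambda_j^2$ would be strictly positive, forcing $P_{ij}=|q_{ij}|^2=0$ for every $i$; but then the corresponding eigenvector would be zero, a contradiction. Hence $\spec(\Phi)\subseteq\{-\rho(G),0,\rho(G)\}$. Since $A(\Phi)\ne 0$, some eigenvalue equals $\pm\rho(G)$, so $\rho(\Phi)=\rho(G)$ and Theorem~\ref{Th2.4} shows that $\Phi$ or $-\Phi$ is balanced. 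Because $\tr A(\Phi)=0$, the eigenvalues $\rho(G)$ and $-\rho(G)$ occur with equal multiplicity, so $\spec(\Phi)$ is symmetric about $0$; combined with Theorem~\ref{Th2.5} (applied to $\Phi$ or to $-\Phi$, using $\spec(-\Phi)=-\spec(\Phi)$) this yields $\spec(G)=\spec(\Phi)$. As $G$ is connected, $\rho(G)$ is a simple eigenvalue of $A(G)$, so $\spec(G)=\{\rho(G),0^{(n-2)},-\rho(G)\}$.

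From here the structure of $G$ follows: $\lambda_{\min}(A(G))=-\rho(G)=-\lambda_{\max}(A(G))$ forces $G$ to be bipartite, and $A(G)$ has a single positive eigenvalue. Applying Theorem~\ref{Th2.7} to the (trivially balanced) gain graph $(G,1)$ on the connected bipartite graph $G$ then identifies $G$ as a complete bipartite graph $K_{a,b}$. Finally, since $G$ is bipartite, Theorem~\ref{Th26} makes balancedness of $\Phi$ and of $-\Phi$ equivalent, so from the alternative ``$\Phi$ or $-\Phi$ balanced'' we conclude that $\Phi$ itself is balanced. For the converse I use that switching by a unitary diagonal $U$ leaves each diagonal entry of $(A(\Phi)^2)^{1/2}$ unchanged, hence preserves vertex energies; so I may assume $\Phi=(K_{a,b},1)$. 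A direct computation of $\bigl(A(K_{a,b})^2\bigr)^{1/2}$, whose diagonal blocks are scalar multiples of all-ones matrices, gives $\mathcal{E}_{\Phi}(v)=\sqrt{b/a}=d(v)/\rho(G)$ for $v$ in the part of size $a$ and symmetrically for the other part, establishing equality.

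The routine part is the inequality itself, which collapses to the elementary bound $\lambda_j^2\le\rho(G)|\lambda_j|$. The delicate part, and the main obstacle, is the equality case: the real work lies in passing from the spectral constraint $\spec(\Phi)\subseteq\{-\rho(G),0,\rho(G)\}$ to the combinatorial conclusion that $G$ is complete bipartite, and in carrying the $\Phi$-versus-$(-\Phi)$ balanced dichotomy produced by Theorem~\ref{Th2.4} until the bipartiteness of $G$ collapses it, via Theorem~\ref{Th26}, to balancedness of $\Phi$.
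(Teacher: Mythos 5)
Your proposal is correct and follows essentially the same route as the paper: the inequality comes from $\lambda_j^2\le\rho(G)|\lambda_j|$ combined with Lemma~\ref{Lm2.4} and the identity $(A(\Phi)^2)_{ii}=d(v_i)$, and the equality case is settled by forcing $\spec(\Phi)\subseteq\{-\rho(G),0,\rho(G)\}$ and then invoking Theorems~\ref{Th2.4}, \ref{Th2.5}, \ref{Th2.7} and \ref{Th26}. Your handling of the equality case is in fact slightly more careful than the paper's (you track which weights $P_{ij}$ can vanish before concluding the spectral constraint holds for all $j$), and your converse via an explicit computation of $(A(K_{a,b})^2)^{1/2}$ replaces the paper's shorter appeal to the three-point spectrum, but these are cosmetic differences.
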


\begin{proof}
	Let $ \lambda_n\leq \lambda_{n-1}\leq \cdots \leq \lambda_1 $ be the eigenvalues of $ \Phi $. Then, by Theorem \ref{Th2.4}, $ \rho(\Phi)\leq \rho(G)$, where $ \rho(\Phi)=\max\{ -\lambda_n, \lambda_1\}$. Therefore $  -\rho(G) \leq \lambda_i \leq \rho(G) $ for all $ i $. Since $ \left| \frac{\lambda_i}{\rho(G)}\right|\leq 1 $, so $ \left( \frac{\lambda_i}{\rho(G)}\right)^{2}\leq \left| \frac{\lambda_i}{\rho(G)}\right|$; and equality holds if and only if $ \lambda_i \in \{ -\rho(G), 0, \rho(G)\} $. Note that, the $i$-th diagonal entry of $A(\Phi)^2$ is $d(v_i)$.  Thus, by  Lemma \ref{Lm2.4}, we get \begin{equation}\label{eq20}
		\mathcal{E}_{\Phi}(v_i) =\sum\limits_{j=1}^{n}|q_{ij}|^{2}|\lambda_j|\geq \sum\limits_{j=1}^{n}|q_{ij}|^{2}\frac{\lambda_j^{2}}{\rho(G)}=\frac{d(v_i)}{\rho(G)},
	\end{equation}
	where $ Q=(q_{ij}) $ is the unitary matrix whose columns are eigenvectors of $ \Phi $. 
	
	For any $ i $, equality holds in the equation \eqref{eq20}  if and only if $ \lambda_j \in \{ -\rho(G), 0, \rho(G)\} $ for all $ j $. As $ G $ has at least one edge, a non-zero eigenvalue exists, say  $ \lambda_i $, different from zero. Thus $ |\lambda_i|=\rho(G)$, and hence $ \rho(\Phi)=\rho(G) $. Therefore, by Theorem \ref{Th2.4}, either $ \Phi $ is balanced or $ -\Phi $ is balanced. 
	
	If $ \Phi $ is balanced, then, by Theorem \ref{Th2.5}, $ \spec(\Phi)=\spec(G) $. As $G$ is connected by the Perron-Frobenius theorem, $\rho(G) $ is a simple eigenvalue of $A(G)$. Since the trace of $ A(G) $ is zero, so $ \spec(\Phi)=\spec(G)=\{ -\rho(G)^{(1)}, 0^{(n-2)}, \rho(G)^{(1)}\} $. Thus $ \Phi=(G, \varphi)$ is a bipartite $ \mathbb{T} $-gain graph with exactly one positive eigenvalue. Therefore, by Theorem \ref{Th2.7}, $ \Phi $ is balanced complete bipartite. 
	
	If $ -\Phi $ is balanced, then $ -\Phi $ is balanced complete bipartite. By Theorem \ref{Th26}, we have $ -\Phi $ is balanced implies $ \Phi $ is balanced. Thus $ \Phi $ is balanced complete bipartite.
	
	Conversely, if $ \Phi $ is  balanced complete bipartite $ \mathbb{T} $-gain graph, then $ \spec(\Phi)=\{ -\rho(G)^{(1)}\break, 0^{(n-2)}, \rho(G)^{(1)}\} $. Hence, equality occurs in \eqref{eq20}.
\end{proof}

Next, we give a lower bound for the energy in terms of the number of edges and the spectral radius of the underlying graph.
\begin{theorem}
	Let $ \Phi=(G, \varphi) $ be a connected $ \mathbb{T} $-gain graph. Then $ \mathcal{E}(\Phi)\geq\frac{2m}{\rho(G)} $, where $m$ is the number of edges in $G$. Equality holds if and only if $ \Phi $ is balanced complete bipartite.
\end{theorem}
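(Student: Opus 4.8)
The plan is to derive this bound directly from the per-vertex estimate of Theorem \ref{Th3.1} by summing over all vertices. The key identity I would use is $\mathcal{E}(\Phi)=\sum_{i=1}^{n}\mathcal{E}_{\Phi}(v_i)$, recorded just before Lemma \ref{Lm2.4}, which reduces the global energy to a sum of vertex energies. Since $G$ is connected and carries the edge count $m$, if $m=0$ the graph is a single vertex and the statement is vacuous; so I may assume $G$ has at least one edge, exactly the hypothesis needed to invoke Theorem \ref{Th3.1}.

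First I would apply Theorem \ref{Th3.1} to each vertex $v_i$, giving $\mathcal{E}_{\Phi}(v_i)\geq d(v_i)/\rho(G)$. Summing these inequalities and pulling the constant $1/\rho(G)$ out of the sum yields
\begin{equation*}
	\mathcal{E}(\Phi)=\sum_{i=1}^{n}\mathcal{E}_{\Phi}(v_i)\geq \frac{1}{\rho(G)}\sum_{i=1}^{n}d(v_i)=\frac{2m}{\rho(G)},
\end{equation*}
where the last equality is the handshake lemma $\sum_{i=1}^{n}d(v_i)=2m$. This establishes the inequality with essentially no additional work beyond Theorem \ref{Th3.1}.

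For the equality characterization, I would note that equality in the summed bound forces equality in the per-vertex inequality $\mathcal{E}_{\Phi}(v_i)\geq d(v_i)/\rho(G)$ for every $v_i$. By the equality clause of Theorem \ref{Th3.1}, equality at a single vertex already implies that $\Phi$ is balanced and $G$ is complete bipartite; conversely, when $\Phi$ is balanced complete bipartite, its spectrum is $\{-\rho(G)^{(1)},0^{(n-2)},\rho(G)^{(1)}\}$, so equality holds at every vertex and hence in the sum. Thus $\mathcal{E}(\Phi)=2m/\rho(G)$ if and only if $\Phi$ is balanced complete bipartite.

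I do not anticipate a genuine obstacle here: this result is a direct corollary of Theorem \ref{Th3.1}, and the only point requiring a moment's care is that the equality condition in Theorem \ref{Th3.1} is already \emph{global} (it pins down $\Phi$ as balanced complete bipartite), so equality at one vertex is equivalent to equality at all of them, and the summed inequality inherits the same characterization without any compatibility issue.
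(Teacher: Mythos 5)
Your proposal is correct and follows essentially the same route as the paper: the paper likewise sums the per-vertex bound of Theorem \ref{Th3.1} over all vertices, uses $\sum_i d(v_i)=2m$, and inherits the equality characterization directly from that theorem. Your extra remarks on the $m=0$ case and on equality at one vertex versus all vertices are fine but not needed beyond what the paper already does.
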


\begin{proof}
	Let $ \{ v_1, v_2, \dots, v_n\} $ be the vertex set of $\Phi$.
	Then $ \mathcal{E}(\Phi)=\sum\limits_{j=1}^{n}\mathcal{E}_{\Phi}(v_j) $. By Theorem \ref{Th3.1}, $ \mathcal{E}(\Phi)=\sum\limits_{j=1}^{n}\mathcal{E}_{\Phi}(v_j)\geq \frac{2m}{\rho(G)}$, and the equality holds if and only if $ \Phi $ is balanced complete bipartite.
\end{proof} 	

The following is the main result of this section.

\begin{theorem} \label{Th3.3}
	Let $ \Phi=(G, \varphi) $ be a connected triangle-free $ \mathbb{T} $-gain  graph. Then\break $ \mathcal{E}(\Phi)\geq 2\delta $, where $\delta$ is the minimum vertex degree of $G$. Equality holds if and only if $ \Phi $ is a balanced complete bipartite $ \mathbb{T} $-gain graph and both the partite sets have size $ \delta$.
\end{theorem}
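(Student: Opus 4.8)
The plan is to squeeze $\mathcal{E}(\Phi)$ below $2\delta$ through a chain of three inequalities and then recover the equality case from the equality condition of each link. The first link is already in hand: summing the vertex-energy bound of Theorem~\ref{Th3.1} over all vertices gives $\mathcal{E}(\Phi)\ge \tfrac{2m}{\rho(G)}$ (the preceding theorem), with equality precisely when $\Phi$ is balanced complete bipartite. The second link bounds $\rho(G)$ in terms of $m$: since $G$ is triangle-free and (assuming it has at least one edge) has clique number $\omega(G)=2$, Nikiforov's Theorem~\ref{Th2.3} gives $\rho(G)^{2}\le m$, i.e.\ $\rho(G)\le\sqrt{m}$, with equality iff $G$ is complete bipartite; hence $\tfrac{2m}{\rho(G)}\ge 2\sqrt{m}$. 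The whole statement then reduces to the purely combinatorial third link $\sqrt{m}\ge\delta$, i.e.\ $m\ge\delta^{2}$, for a triangle-free graph of minimum degree $\delta$.

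To establish $m\ge\delta^{2}$ I would use triangle-freeness directly. Fix any vertex $u$; since $G$ has no triangle, the neighbourhood $N(u)$ is an independent set, so every edge incident to $N(u)$ has exactly one endpoint there and no such edge is double-counted. Therefore $m\ge\sum_{w\in N(u)}d(w)\ge |N(u)|\,\delta=d(u)\,\delta\ge\delta^{2}$, using $d(w)\ge\delta$ for each neighbour $w$ and $d(u)\ge\delta$. Concatenating the three links yields $\mathcal{E}(\Phi)\ge\tfrac{2m}{\rho(G)}\ge 2\sqrt{m}\ge 2\delta$.

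For the equality analysis I would impose equality in all three links at once. Equality in the first link forces $\Phi$ to be balanced complete bipartite, say on $K_{a,b}$, and this already makes the second link tight; it remains to use the third link. For $K_{a,b}$ one has $\delta=\min(a,b)$ and $m=ab$, so $m=\delta^{2}$ forces $a=b=\delta$. Conversely, if $\Phi$ is balanced $K_{\delta,\delta}$ then $\spec(\Phi)=\spec(G)=\{-\delta,\,0^{(2\delta-2)},\,\delta\}$, whence $\mathcal{E}(\Phi)=2\delta$ and the bound is attained. I expect the combinatorial inequality $m\ge\delta^{2}$ and, more subtly, the bookkeeping needed to verify that the three separate equality conditions are mutually compatible and collapse exactly to ``balanced $K_{\delta,\delta}$'' to be the delicate part of the argument; the spectral ingredients are off-the-shelf once triangle-freeness pins down $\omega(G)=2$.
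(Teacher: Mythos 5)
Your proof is correct, and it shares the paper's spectral skeleton (the vertex-energy bound of Theorem~\ref{Th3.1} summed to give $\mathcal{E}(\Phi)\ge 2m/\rho(G)$, followed by $\rho(G)\le\sqrt{m}$ with equality characterized by complete bipartiteness via Theorem~\ref{Th2.3}), but the combinatorial endgame is genuinely different. The paper writes the chain as $\mathcal{E}(\Phi)\ge \frac{n\delta}{\rho(G)}\ge\frac{n\delta}{\sqrt m}\ge \frac{n\delta}{\sqrt{\lfloor n^2/4\rfloor}}\ge 2\delta$, i.e.\ it first lower-bounds $2m=\sum_j d(v_j)$ by $n\delta$ and then invokes Mantel's theorem (Theorem~\ref{Th2.1}) to bound $m$ \emph{above} by $n^2/4$; the equality case then picks up ``$G$ is $\delta$-regular'' and ``$G$ is balanced bipartite with equal parts'' as separate conditions. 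You instead keep $2m/\rho(G)\ge 2\sqrt m$ and close with the elementary lower bound $m\ge\delta^2$ obtained from the independence of a neighbourhood $N(u)$ in a triangle-free graph — this avoids Mantel entirely and makes the last step a one-line count, while your equality analysis is also leaner: once the first link forces $\Phi$ to be balanced complete bipartite on some $K_{a,b}$, the single condition $m=\delta^2$ pins down $a=b=\delta$, so you never need to extract regularity as a separate equality condition. Both arguments are sound; yours trades one classical extremal theorem for an elementary double count and slightly simplifies the bookkeeping in the equality case. The only (shared, degenerate) loose end is the one-vertex graph, which you already fence off with your ``at least one edge'' parenthetical, exactly as Theorem~\ref{Th3.1} requires.
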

\begin{proof}
	By Theorem \ref{Th3.1}, Theorem \ref{Th2.1}, and Theorem \ref{Th2.2}, we have
	\begin{equation}\label{eq24}
		\mathcal{E}(\Phi)=\sum\limits_{j=1}^{n}\mathcal{E}_{\Phi}(v_j)\geq \sum\limits_{j=1}^{n}\frac{d(v_j)}{\rho(G)}\geq \frac{n\delta}{\rho(G)}\geq \frac{n\delta}{\sqrt{m}}\geq \frac{n\delta}{\sqrt{\lfloor\frac{n^{2}}{4}\rfloor}}\geq \frac{n\delta}{\sqrt{\frac{n^{2}}{4}}}=2\delta.
	\end{equation}
	Equality holds if only if all the terms in the equation \eqref{eq24} are equal. By Theorem \ref{Th3.1},\break $\sum\limits_{j=1}^{n}\mathcal{E}_{\Phi}(v_j) = \sum\limits_{j=1}^{n}\frac{d(v_j)}{\rho(G)}$ if and only if $ \Phi $ is balanced complete bipartite $ \mathbb{T} $-gain graph.  It is easy to see that $\sum\limits_{j=1}^{n}\frac{d(v_j)}{\rho(G)}= \frac{n\delta}{\rho(G)}$ if and only if $ G $ is regular with degree $ \delta$. By Theorem  \ref{Th2.3}(i),\break $\rho(G) = \sqrt m$ if and only if $ G $ is a complete bipartite graph. By Mantel's theorem,\break  $\frac{n\delta}{\sqrt{m}}= \frac{n\delta}{\sqrt{\lfloor\frac{n^{2}}{4}\rfloor}}$ if and only if $G$ is bipartite with equal partition size.  Combining these observations,  we get $ \Phi\sim(K_{\delta,\delta},1) $.  Thus $ \mathcal{E}(\Phi)=2\delta$ implies $ \Phi\sim(K_{\delta,\delta},1) $. The converse is easy to verify.
\end{proof}

\begin{remark}
	Theorem \ref{Th3.3} holds for signed and mixed graphs as well.
\end{remark}
\section{ Upper bound for the energy of gain graphs in terms of the matching number}\label{upper_bound}
In this section, we establish an upper bound for the energy of $ \mathbb{T} $-gain graphs in terms of the matching number of the underlying graph. This result is the counterpart of the corresponding known result for the undirected graphs \cite{ S_Akbari, Pan-Chen-Li, F.Tian-D.Wong}. We characterize all the $ \mathbb{T} $-gain graphs for which the upper bound is attained. We begin this section with needed results, including the Coulson integral formula for the energy of $\mathbb{T}$-gain graphs and a relationship between the characteristic and the matching polynomials of  $ \mathbb{T} $-gain graphs.

The $ \mathbb{T} $-gain energy of $\mathbb{T}$-gain graph $\Phi$ is the sum of the singular values of $A(\Phi)$. This section provides an integral representation for the $\mathbb{T}$-gain energy. Using this representation, we establish a bound for the $\mathbb{T}$-gain energy. Let $ \Phi=(G, \varphi) $ be a $ \mathbb{T} $-gain graph, and let $ P_{\Phi}(x):=\det(xI-A(\Phi))$ be the characteristic polynomial of $\Phi $.

Let $ \Phi $ be a $ \mathbb{T} $-gain graph on $ n $ vertices with the characteristic polynomial $ P_{\Phi}(x) $. Then, by Theorem \ref{Th2.9}, 
\begin{eqnarray}\label{eq6}
	\mathcal{E}(\Phi)=\frac{1}{\pi}\int_{-\infty}^{+\infty}\frac{1}{t^{2}}\log\bigg|t^{n}P_{\Phi}\left(\frac{i}{t}\right)\bigg|dt.
\end{eqnarray}

Let $ P_{\Phi}(x)= x^n+b_1(\Phi)x^{n-1}+b_2(\Phi)x^{n-2}+\cdots+b_n(\Phi)$ be the characteristic polynomial of $ \Phi $. Then  
simplifying the expression \eqref{eq6}, we have

\begin{align}\label{eq4}
	\mathcal{E}(\Phi)&=\frac{1}{\pi}\int_{-\infty}^{+\infty}\frac{1}{t^{2}}\log\bigg |1+ t^{n}\sum\limits_{k=1}^{n}b_{k}(\Phi)\left(\frac{i}{t}\right)^{n-k} \bigg|dt 	\nonumber\\
	&=\frac{1}{\pi}\int_{-\infty}^{+\infty}\frac{1}{t^{2}}\log\bigg |1+t^{n}\sum\limits_{k=1}^{n} (-i)^{k}b_{k}(\Phi)t^{k-n} \bigg|dt \nonumber\\
	&=\frac{1}{2\pi}\int_{-\infty}^{+\infty}\frac{1}{t^{2}}\log\bigg \{A(t)^2+B(t)^2 \bigg \}dt,
\end{align}

where $ A(t)=\bigg(1+\sum\limits_{k=1}^{\lfloor\frac{n}{2} \rfloor} (-1)^kb_{2k}(\Phi)t^{2k} \bigg)$ and $ B(t)=\bigg(\sum\limits_{k=0}^{\lfloor\frac{n-1}{2} \rfloor}(-1)^kb_{2k+1}(\Phi)t^{2k+1}\bigg) $.

\vspace{.5cm}
The above discussion gives a proof for the following theorem. 
\begin{theorem}\label{Lm4.2}
	Let $ \Phi $ be a $ \mathbb{T} $-gain graph with characteristic polynomial $ P_{\Phi}(x)= x^n+b_1(\Phi)x^{n-1}+b_2(\Phi)x^{n-2}+\cdots+b_n(\Phi).$  Then 
	\begin{equation}\label{eq10}
		\mathcal{E}(\Phi)=\frac{1}{2\pi}\int_{-\infty}^{+\infty}\frac{1}{t^{2}}\log\Bigg \{\bigg(1+ \sum\limits_{k=1}^{\lfloor\frac{n}{2} \rfloor} (-1)^kb_{2k}(\Phi)t^{2k}\bigg)^2+\bigg( \sum\limits_{k=0}^{\lfloor\frac{n-1}{2} \rfloor}(-1)^kb_{2k+1}(\Phi)t^{2k+1}\bigg)^2 \Bigg \}dt.
	\end{equation}		
\end{theorem}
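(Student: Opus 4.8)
The plan is to obtain the formula as a direct consequence of the Coulson-type integral representation in Theorem \ref{Th2.9}, the only genuine input being the reality of the spectrum of $A(\Phi)$; everything else is elementary manipulation of the powers of $i$.

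The conceptual step I would isolate first is the following. Theorem \ref{Th2.9} evaluates $\sum_j |\Re(\lambda_j)|$ for an arbitrary monic polynomial, whereas the quantity of interest is $\sum_j |\lambda_j|$. These coincide here because $A(\Phi)$ is Hermitian, so every root $\lambda_j$ of $P_\Phi$ is real; hence $|\Re(\lambda_j)| = |\lambda_j|$ for each $j$ and $\sum_j |\Re(\lambda_j)| = \mathcal{E}(\Phi)$. Applying Theorem \ref{Th2.9} with $X = P_\Phi$ then gives at once
\[
\mathcal{E}(\Phi) = \frac{1}{\pi}\int_{-\infty}^{+\infty}\frac{1}{t^2}\log\left|t^n P_\Phi\!\left(\frac{i}{t}\right)\right|\,dt,
\]
which is \eqref{eq6}; convergence is inherited from the cited theorem and needs no separate argument.

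It then remains to rewrite the integrand. Substituting $P_\Phi(x) = x^n + \sum_{k=1}^n b_k(\Phi)x^{n-k}$ and using $t^n (i/t)^{n-k} = i^{n-k} t^k$, I would write $t^n P_\Phi(i/t) = i^n + \sum_{k=1}^n b_k(\Phi) i^{n-k} t^k = i^n\bigl(1 + \sum_{k=1}^n (-i)^k b_k(\Phi) t^k\bigr)$, where I factored out $i^n$ and used $i^{-k} = (-i)^k$. Because $|i^n| = 1$, this prefactor drops out of the modulus. I would then split the sum by the parity of $k$ via $(-i)^{2k} = (-1)^k$ (real) and $(-i)^{2k+1} = -i(-1)^k$ (imaginary), which identifies the real part as $1 + \sum_{k=1}^{\lfloor n/2\rfloor}(-1)^k b_{2k}(\Phi)t^{2k}$ and the imaginary part as $-\sum_{k=0}^{\lfloor (n-1)/2\rfloor}(-1)^k b_{2k+1}(\Phi)t^{2k+1}$. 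Finally, using $\log|z| = \tfrac{1}{2}\log\!\left((\operatorname{Re} z)^2 + (\operatorname{Im} z)^2\right)$ turns the $\tfrac1\pi$ into $\tfrac1{2\pi}$ and produces exactly the sum of squares $A(t)^2 + B(t)^2$ inside the logarithm, which is \eqref{eq10}.

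I do not anticipate a real obstacle: once the Hermiticity of $A(\Phi)$ is invoked, the remainder is bookkeeping. The only points demanding care are tracking the sign of the imaginary part through $(-i)^{2k+1} = -i(-1)^k$ — harmless in the end since only $B(t)^2$ survives — and checking the floor bounds $\lfloor n/2\rfloor$ and $\lfloor (n-1)/2\rfloor$ so that the top-degree coefficient $b_n(\Phi)$ is correctly assigned to $A$ or $B$ according to the parity of $n$.
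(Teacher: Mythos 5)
Your proposal is correct and follows essentially the same route as the paper: apply the Coulson-type integral formula of Theorem \ref{Th2.9} to $P_{\Phi}$, use the Hermiticity of $A(\Phi)$ to identify $\sum_j|\Re(\lambda_j)|$ with $\mathcal{E}(\Phi)$, factor out $i^n$ inside the modulus, and separate the real and imaginary parts by the parity of $k$ before invoking $\log|z|=\tfrac12\log\bigl((\Re z)^2+(\Im z)^2\bigr)$. The only difference is that you make explicit the reality-of-spectrum step that the paper leaves implicit, which is a small improvement in exposition rather than a change of method.
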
 

For a  graph $G$, let  $ m(G,j) $ be the number of matchings of size $ j$ in $ G $. The matching polynomial of a graph $G$ on $n$ vertices, denoted by $ m_G(x) $, is defined as follows   
\begin{equation*}
	m_G(x)=\sum\limits_{j=0}^{\lfloor \frac{n}{2}\rfloor} (-1)^jm(G,j)x^{n-2j}.
\end{equation*}
Define $ m(G,0)=1$.  A \textit{matching} in a $ \mathbb{T} $-gain graph $  \Phi=(G, \varphi) $ is the matching in the underlying graph $ G $, and the \textit{matching polynomial} of $ \Phi $, denoted by $ m_\Phi(x) $, is defined as  $ m_\Phi(x)=m_G(x) $.

For a subgraph $K$  of a graph $G$,  let $n(K)$ denote the number of components in $K$, and $G-K$ is the graph obtained from $G$ by deleting the vertices of $K$. 
In \cite{Godsil_Gutman}, Godsil and Gutman established the following relationship between the characteristic polynomial and the matching polynomial of a simple graph $ G $. 

\begin{theorem}[{\cite[Theorem 4]{Godsil_Gutman}}]\label{Th4.1}
	Let $ P_G(x) $ and $ m_{G}(x) $ be the characteristic and the matching polynomials of a simple graph $ G $, respectively. Then 
	\begin{equation*}
		P_G(x)=m_G(x)+\sum\limits_{K}(-2)^{n(K)}m_{G-K}(x),
	\end{equation*}
	where the summation is over all nontrivial subgraphs $ K$ of $ G $, which are the union of vertex disjoint cycles ( i.e.,  $ 2 $-regular subgraphs of $ G $).
\end{theorem}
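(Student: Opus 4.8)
The plan is to derive the identity directly from Sachs' coefficient theorem for the characteristic polynomial, which is precisely the special case of Lemma \ref{Lm2.3} applied to the trivial gain graph $(G,1)$. Taking $\varphi\equiv 1$ makes every factor $\Re(\varphi(C))=1$, so the coefficient of $x^{n-i}$ in $P_G(x)$ reduces to $b_i(G)=\sum_{H\in\mathcal{H}_i(G)}(-1)^{n(H)}2^{c(H)}$, the sum being over all elementary subgraphs $H$ on $i$ vertices, where each such $H$ is a vertex-disjoint union of edges and cycles.

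The key structural observation is that every elementary subgraph $H$ decomposes uniquely as $H=K\sqcup M$, where $K$ is the union of the cycle components of $H$ (hence a $2$-regular subgraph of $G$, possibly empty) and $M$ is a matching occupying the remaining vertices, all of which lie in $G-K$. Under this decomposition the two statistics in Sachs' formula split cleanly: the number of components satisfies $n(H)=n(K)+|M|$, and the number of cycles satisfies $c(H)=n(K)$. Consequently $(-1)^{n(H)}2^{c(H)}=(-2)^{n(K)}(-1)^{|M|}$, which separates the cycle contribution from the matching contribution.

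Next I would reorganize the double sum defining $P_G(x)=\sum_{i}b_i(G)x^{n-i}$ by first summing over the cycle part $K$ and then over the matchings $M$ in $G-K$. If $K$ has $v_K$ vertices and $M$ has $j$ edges, then $H=K\sqcup M$ has $v_K+2j$ vertices and thus contributes to the coefficient of $x^{n-v_K-2j}$. For a fixed $K$, summing the terms $(-1)^{j}m(G-K,j)\,x^{(n-v_K)-2j}$ over all $j$ reassembles exactly the matching polynomial $m_{G-K}(x)$ of the $(n-v_K)$-vertex graph $G-K$. This yields
\[
	P_G(x)=\sum_{K}(-2)^{n(K)}m_{G-K}(x),
\]
where the sum ranges over all $2$-regular subgraphs $K$ of $G$, including the empty one. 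Isolating the empty-cycle term (for which $n(K)=0$ and $G-K=G$) produces the leading summand $m_G(x)$ and leaves precisely the stated sum over nontrivial $K$.

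The main obstacle is the bookkeeping in the regrouping step: I must verify that the cycle/matching decomposition genuinely gives a bijection between pairs $(K,M)$ and elementary subgraphs, and that the exponents of $x$ together with the signs $(-1)^{|M|}$ line up so that the inner sum collapses to $m_{G-K}(x)$ rather than some shifted or twisted variant. Once the exponent and sign accounting is pinned down, the remaining manipulations are routine.
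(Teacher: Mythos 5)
Your argument is correct and is essentially the same as the paper's own treatment: the paper states Theorem \ref{Th4.1} as a cited result, but its proof of the gain-graph generalization (Theorem \ref{Th4.2}) uses exactly your strategy --- apply the Sachs-type coefficient formula of Lemma \ref{Lm2.3}, split each elementary subgraph $H$ into its cycle part $K$ and matching part, use $(-1)^{n(H)}2^{c(H)}=(-2)^{n(K)}(-1)^{|M|}$, and regroup so the inner sum over matchings of $G-K$ reassembles $m_{G-K}(x)$. Your version is just the specialization $\varphi\equiv 1$, where every factor $\Re(\varphi(C))$ equals $1$, and the exponent and sign bookkeeping you flag as the main obstacle does go through exactly as you describe.
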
 

Our next objective is to extend the above theorem for the  $ \mathbb{T} $-gain graphs.
\begin{theorem}\label{Th4.2}
	Let $ \Phi=(G, \varphi) $ be a $ \mathbb{T} $-gain graph, and  $ P_\Phi(x) $ and $ m_G(x) $ be the characteristic and the matching polynomials of $\Phi$, respectively. Then 
	\begin{equation*}
		P_\Phi(x)=m_G(x)+\sum\limits_{K}(-2)^{n(K)}\prod\limits_{C\in K}\Re(\varphi(C))m_{G-K}(x),
	\end{equation*}
	where the summation is over all nontrivial subgraphs $ K$ of $ G $ which are union of vertex disjoint cycles, and the product is over all cycles $C$ in $K$.
\end{theorem}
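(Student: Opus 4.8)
The plan is to read the coefficients of $P_\Phi(x)$ directly off the elementary-subgraph expansion of Lemma~\ref{Lm2.3} and then to reorganize the resulting sum by separating, inside each elementary subgraph, its edge part from its cycle part. Recall that an elementary subgraph $H$ (one counted in some $\mathcal{H}_i(G)$) has every component equal to a single edge or to a cycle. Writing $H=M\sqcup K$, where $M$ is the union of the edge-components and $K$ is the union of the cycle-components, the two parts are vertex-disjoint; thus $K$ is a $2$-regular subgraph of $G$ and $M$ is a matching of $G-K$.

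First I would record the bookkeeping identities for this decomposition. If $M$ has $j$ edges and $K$ has $c$ cycles, then the number of components is $n(H)=j+c$, the number of cycles is $c(H)=c=n(K)$, the vertex count is $|V(H)|=2j+|V(K)|$, and $\prod_{C\in\mathcal{C}(H)}\Re(\varphi(C))=\prod_{C\in K}\Re(\varphi(C))$. Hence the weight attached to $H$ in \eqref{eq5} factors as
\begin{equation*}
(-1)^{n(H)}2^{c(H)}=(-1)^{j}(-2)^{c}=(-1)^{|M|}(-2)^{n(K)}.
\end{equation*}

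Next I would substitute $P_\Phi(x)=\sum_{i=0}^{n}b_i(\Phi)x^{n-i}$ with $b_0(\Phi)=1$, insert Lemma~\ref{Lm2.3}, and interchange the order of summation so that the outer sum runs over all $2$-regular subgraphs $K$ of $G$ (including the empty one) and the inner sum over all matchings $M$ of $G-K$. Using the factorization above, the contribution of a fixed $K$ becomes
\begin{equation*}
(-2)^{n(K)}\prod_{C\in K}\Re(\varphi(C))\sum_{M}(-1)^{|M|}x^{(n-|V(K)|)-2|M|},
\end{equation*}
and, grouping matchings $M$ of $G-K$ by their size $j$, the inner sum is exactly $\sum_{j}(-1)^{j}m(G-K,j)x^{(n-|V(K)|)-2j}=m_{G-K}(x)$ by the definition of the matching polynomial. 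Isolating the empty subgraph $K=\emptyset$, which contributes $m_G(x)$, yields the claimed identity.

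The only points needing care — rather than a genuine obstacle — are combinatorial: verifying that $H\mapsto(K,M)$ is a bijection between elementary subgraphs of $G$ (of every size) and pairs consisting of a $2$-regular subgraph $K$ together with a matching $M$ of $G-K$, and that the exponents of $x$ track correctly through the reindexing so the inner sum reassembles into the \emph{full} matching polynomial $m_{G-K}(x)$ rather than a truncation. Since every matching of $G-K$ arises (with $m(G-K,0)=1$ accounting for the empty matching) and the vertex count is additive over the disjoint pieces $M$ and $K$, these checks go through, and Theorem~\ref{Th4.2} follows. As a sanity check, the classical Godsil--Gutman relation (Theorem~\ref{Th4.1}) is recovered when $\Phi$ is balanced, since then $\Re(\varphi(C))=1$ for every cycle $C$.
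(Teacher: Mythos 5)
Your proposal is correct and follows essentially the same route as the paper's own proof: both start from the coefficient formula of Lemma \ref{Lm2.3}, split each elementary subgraph $H$ into its cycle part $K$ and its edge part (your $M$, the paper's $F$), factor the weight as $(-1)^{|M|}(-2)^{n(K)}$, interchange the order of summation so that the inner sum over matchings of $G-K$ reassembles into $m_{G-K}(x)$, and finally separate the $K=\emptyset$ term. The sign and vertex-count bookkeeping you record matches the paper's computation exactly, so no further comment is needed.
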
 

\begin{proof}
	
	
	Let $ P_\Phi(x) =x^n+\sum\limits_{j=1}^{n}b_j(\Phi)x^{n-j}$	be the characteristic polynomial of $ \Phi $. By Lemma \ref{Lm2.3}, 
	\begin{equation}\label{eq12} P_\Phi(x) =\sum\limits_{j=0}^{n}\left( \sum\limits_{H\in \mathcal{H}_j}(-1)^{n(H)}2^{c(H)}\prod\limits_{C\in \mathcal{C}(H)} \Re(\varphi(C))\right)x^{n-j}.
	\end{equation}
	Note that $ H $ is an arbitrary elementary subgraph of $ G $ with $ j $ number of vertices. Then, each component of $ H $ is either a cycle or an edge. Split $ H $ into two subgraphs $ K $ and $ F $ such that $ K $ contains all the cycles (vertex disjoint) and $ F $ contains the remaining edges (independent edges) of $ H $, respectively. Therefore, $ H= F \cup K $ such that the number of vertices $ j $ in $ H $ can be expressed as $ j=2n(F)+|K| $, where $ |K| $ denotes the number of vertices in $K$. By rearranging the internal summation of \eqref{eq12}  in terms of $K$ and $F$, we get 
	
	\begin{equation*}
		P_\Phi(x) =\sum\limits_{j=0}^{n} \sum\limits_{K} \sum\limits_{F} (-2)^{n(K)} (-1)^{n(F)} \prod\limits_{C\in K} \Re(\varphi(C))x^{n-(2n(F))+|K|)},
	\end{equation*}where $ j=2n(F)+|K| $.
	Now, consider $ K $ as an arbitrary subgraph of $ G $ such that it contains only vertex-disjoint cycles. After reordering the summations, we get 	
	\begin{align*}
		P_\Phi(x) &=\sum\limits_{K}(-2)^{n(K)}\prod\limits_{C\in K} \Re(\varphi(C)) \sum\limits_{n(F)=0}^{\lfloor \frac{n-|K|}{2} \rfloor} \sum\limits_{F}  (-1)^{n(F)} x^{n-(2n(F)+|K|)}\\
		&=\sum\limits_{K}(-2)^{n(K)}\prod\limits_{C\in K} \Re(\varphi(C)) \sum\limits_{n(F)=0}^{\lfloor \frac{n-|K|}{2} \rfloor} m(G-K, n(F))  (-1)^{n(F)} x^{n-(2n(F)+|K|)}\\
		&=\sum\limits_{K}(-2)^{n(K)}\prod\limits_{C\in K} \Re(\varphi(C)) \sum\limits_{t=0}^{\lfloor \frac{n-|K|}{2} \rfloor}(-1)^t m(G-K,t)  x^{n-|K|-2t}\\
		&=\sum\limits_{K}(-2)^{n(K)}\prod\limits_{C\in K} \Re(\varphi(C))m_{G-K}(x).
	\end{align*}
	We split the summation into two parts, $ K=\phi $
	and $ K\ne \phi $. 
	Then 
	\begin{align*}
		P_\Phi(x)= m_G(x)+\sum\limits_{K\ne \phi}(-2)^{n(K)}\prod\limits_{C\in K} \Re(\varphi(C))m_{G-K}(x)
	\end{align*}
	where the summation is over all nontrivial subgraph $ K $ of $ G $ which are union of vertex disjoint cycles, $ n(K) $ is the number of cycles in $ K $, and $ G-K $ is the graph obtained from $ G $ by deleting the vertices of $K $.
\end{proof}

%

\begin{remark}

	The matching polynomial of a simple graph is the same as its characteristic polynomial if and only if the graph is a forest \cite{Godsil_Gutman}. By Theorem  \ref{Th4.2}, it is easy to see that if $ \Phi=(G, \varphi) $ is a $ \mathbb{T} $-gain graph and $G$ is a forest, then the matching polynomial and the characteristic polynomial are the same for $\Phi$. However, in general, the converse need not be true. 
	Let $ \Phi=(C_3, \varphi) $ be a $ \mathbb{T} $-gain graph on a triangle $ C_3:v_1\sim v_2 \sim v_3\sim v_1 $   such that $ \varphi(\vec{e}_{1,2}) =\varphi(\vec{e}_{2,3})=\varphi(\vec{e}_{3,1})=i$. Then $ P_{\Phi}(x)=m_{\Phi}(x)=x^3-3x $. 
\end{remark}
The following result provides a sufficient condition for a $\mathbb{T}$-gain graph with the same characteristic and matching polynomials. 

\begin{corollary}\label{Cor4.2}
	Let $ \Phi=(G, \varphi) $ be a $ \mathbb{T} $-gain graph with $ \Re(\varphi(C))=0 $ for all  cycle $ C $ in $ G $. Then $$ P_\Phi(x)=m_G(x)=\sum\limits_{j=0}^{\lfloor \frac{n}{2} \rfloor} (-1)^{j}m(G,j)x^{n-2j} .$$
\end{corollary}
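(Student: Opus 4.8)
The plan is to derive Corollary \ref{Cor4.2} as an immediate specialization of Theorem \ref{Th4.2}. Theorem \ref{Th4.2} expresses the characteristic polynomial of $\Phi=(G,\varphi)$ as
\[
P_\Phi(x)=m_G(x)+\sum_{K}(-2)^{n(K)}\prod_{C\in K}\Re(\varphi(C))\,m_{G-K}(x),
\]
where $K$ ranges over all nontrivial subgraphs that are vertex-disjoint unions of cycles. First I would observe that the hypothesis $\Re(\varphi(C))=0$ for every cycle $C$ in $G$ forces each summand indexed by a nontrivial $K$ to vanish. Indeed, any such $K$ contains at least one cycle $C_0$, and since $C_0$ is a cycle in $G$, we have $\Re(\varphi(C_0))=0$ by hypothesis; hence the product $\prod_{C\in K}\Re(\varphi(C))$ contains the zero factor $\Re(\varphi(C_0))$ and is therefore $0$. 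This kills the entire sum over $K\neq\phi$, leaving $P_\Phi(x)=m_G(x)$.

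Next I would supply the explicit expansion of $m_G(x)$ to match the right-hand side of the claimed identity. This is nothing more than the definition of the matching polynomial recalled just before Theorem \ref{Th4.1}, namely
\[
m_G(x)=\sum_{j=0}^{\lfloor n/2\rfloor}(-1)^{j}\,m(G,j)\,x^{n-2j},
\]
with the convention $m(G,0)=1$. Substituting this into $P_\Phi(x)=m_G(x)$ yields exactly the stated conclusion $P_\Phi(x)=m_G(x)=\sum_{j=0}^{\lfloor n/2\rfloor}(-1)^j m(G,j)x^{n-2j}$.

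There is essentially no obstacle here, as the corollary is a direct consequence of Theorem \ref{Th4.2} together with the defining formula for the matching polynomial. The only point deserving a word of care is the edge case where $G$ has no cycles at all (i.e.\ $G$ is a forest): then the sum over nontrivial $K$ is empty to begin with, and the identity holds vacuously, consistent with the remark preceding the corollary. In all remaining cases the vanishing-factor argument applies verbatim, so the proof reduces to these two short lines.
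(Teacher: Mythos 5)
Your proposal is correct and is exactly the argument the paper intends: the corollary follows immediately from Theorem \ref{Th4.2} because every nontrivial $K$ contains a cycle whose gain has zero real part, annihilating the sum, and the explicit expansion is just the definition of $m_G(x)$. The paper in fact states the corollary without a separate proof, so your write-up supplies precisely the omitted (and essentially unique) reasoning.
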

As a consequence of the above result, we have a formula for energy. 
\begin{corollary}\label{Cor4.3}
	Let $ \Phi=(G, \varphi) $ be a $ \mathbb{T} $-gain graph with $ \Re(\varphi(C))=0 $ for all cycle $ C $ in $ G $. Then 
	\begin{equation*}
		\mathcal{E}(\Phi)=\frac{2}{\pi}\int\limits_{0}^{\infty}\frac{1}{x^2}\log\bigg( 1+\sum\limits_{j=1}^{\lfloor\frac{n}{2} \rfloor}m(G,j)x^{2j}\bigg)dx.
	\end{equation*}
\end{corollary}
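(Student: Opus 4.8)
The plan is to combine Corollary~\ref{Cor4.2} with the integral representation in Theorem~\ref{Lm4.2}. Since $\Re(\varphi(C))=0$ for every cycle $C$ of $G$, Corollary~\ref{Cor4.2} gives $P_\Phi(x)=m_G(x)=\sum_{j=0}^{\lfloor n/2\rfloor}(-1)^j m(G,j)x^{n-2j}$. Matching this against the generic expansion $P_\Phi(x)=x^n+\sum_{k=1}^{n}b_k(\Phi)x^{n-k}$, I would read off the coefficients directly: the matching polynomial contains only powers $x^{n-2j}$, so every odd-indexed coefficient vanishes, $b_{2k+1}(\Phi)=0$, while the even-indexed coefficients are $b_{2k}(\Phi)=(-1)^k m(G,k)$.

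Next, I would substitute these coefficients into formula~\eqref{eq10} of Theorem~\ref{Lm4.2}. Because every odd-indexed coefficient is zero, the term $B(t)=\sum_{k}(-1)^k b_{2k+1}(\Phi)t^{2k+1}$ vanishes identically, and the argument of the logarithm collapses to $A(t)^2$, where
$$A(t)=1+\sum_{k=1}^{\lfloor n/2\rfloor}(-1)^k b_{2k}(\Phi)t^{2k}=1+\sum_{k=1}^{\lfloor n/2\rfloor} m(G,k)\,t^{2k}.$$
The crucial bookkeeping point is that the explicit sign $(-1)^k$ in the formula cancels the sign hidden in $b_{2k}(\Phi)=(-1)^k m(G,k)$, leaving only nonnegative coefficients. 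Consequently $A(t)\ge 1>0$ for all real $t$, which lets me rewrite $\log\{A(t)^2\}=2\log A(t)$ with no absolute-value ambiguity.

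Finally, I would use parity. The polynomial $A(t)$ is even in $t$, and $t^{-2}$ is even, so $t^{-2}\log A(t)$ is an even function; folding the integral over $(-\infty,\infty)$ onto $(0,\infty)$ contributes a factor~$2$. Tracking the constants---the $\tfrac{1}{2\pi}$ prefactor of \eqref{eq10}, the factor $2$ from $\log\{A(t)^2\}=2\log A(t)$, and the factor $2$ from folding---yields the asserted $\tfrac{2}{\pi}$, and renaming $t$ to $x$ gives exactly the stated formula.

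I do not expect a substantive obstacle, as the result is essentially a direct substitution. The only points meriting care are the cancellation of the two signs producing the nonnegative coefficients of $A(t)$, and the positivity $A(t)>0$ that justifies passing from $\log\{A(t)^2\}$ to $2\log A(t)$; the convergence of the resulting integral is already inherited from the representation~\eqref{eq6} underlying Theorem~\ref{Lm4.2}.
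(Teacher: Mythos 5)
Your proposal is correct and follows exactly the paper's route: invoke Corollary \ref{Cor4.2} to read off $b_{2j}(\Phi)=(-1)^jm(G,j)$ and $b_{2j+1}(\Phi)=0$, then substitute into Theorem \ref{Lm4.2}. The paper compresses the final bookkeeping into ``the result follows from Theorem \ref{Lm4.2}''; your explicit tracking of the sign cancellation, the positivity of $A(t)$, and the fold of the integral onto $(0,\infty)$ just spells out what the paper leaves implicit.
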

\begin{proof}
	Let $ P_{\Phi}(x)=x^n+b_1(\Phi)x^{n-1}+b_2(\Phi)x^{n-2}+\cdots+b_n(\Phi) $ be the characteristic polynomial of $ \Phi $. From Corollary \ref{Cor4.2}, we have $P_\Phi(x)=\sum\limits_{j=0}^{\lfloor \frac{n}{2} \rfloor} (-1)^{j}m(G,j)x^{n-2j}$. Comparing coefficients of both the above polynomials, we get $ b_{2j}(\Phi)=(-1)^jm(G,j) $ and $ b_{2j+1}(\Phi)=0 $, for $ j=0,1, \dots, \lfloor \frac{n}{2} \rfloor $. Now, the result follows from Theorem \ref{Lm4.2}. 
\end{proof}
Corollary \ref{Cor4.3} is useful in constructing equienergetic $ \mathbb{T} $-gain graphs which are not switching equivalent. Two $ \mathbb{T} $-gain graphs $ \Phi_1 $ and $ \Phi_2 $ are \textit{ equienergetic} if $ \mathcal{E}(\Phi_1)=\mathcal{E}(\Phi_2)$.   Let $ G $ be a graph with cycles that are vertex disjoint. 	If $ \Phi_1=(G, \varphi_1)$ and $ \Phi_2=(G, \varphi_2) $ are two $ \mathbb{T} $-gain graphs such that $\Re(\varphi_j(C))=0 $, $ j=1,2 $ for all cycles $ C $ in $ G $, then $\Phi_1$ and $\Phi_2$ are equienergetic. 

%


Define the graphs $G_{p,q,r}$ and $T_{p,q}$, with $p, q, r \geq 0$ as follows: $G_{p,q,r}$ is a graph on $(p+q+r+2)$ vertices with $r$ triangles with a common base edge, say $e = \{u, v\}$, and the vertex $u$ is adjacent to $p$ pendent vertices,  and $v$ is adjacent to $q$ pendent vertices. The tree $T_{p, q}$ consists of $(p+q + 2)$ vertices, where  there is an edge $ e =\{u,v\}$ such that $u$ is adjacent to $p$ pendent vertices, and $v$ is adjacent to $q$ pendent vertices. Both the graphs are depicted in Figure 2.

\begin{figure} [!htb]
	\begin{center}
		\includegraphics[scale= 0.65]{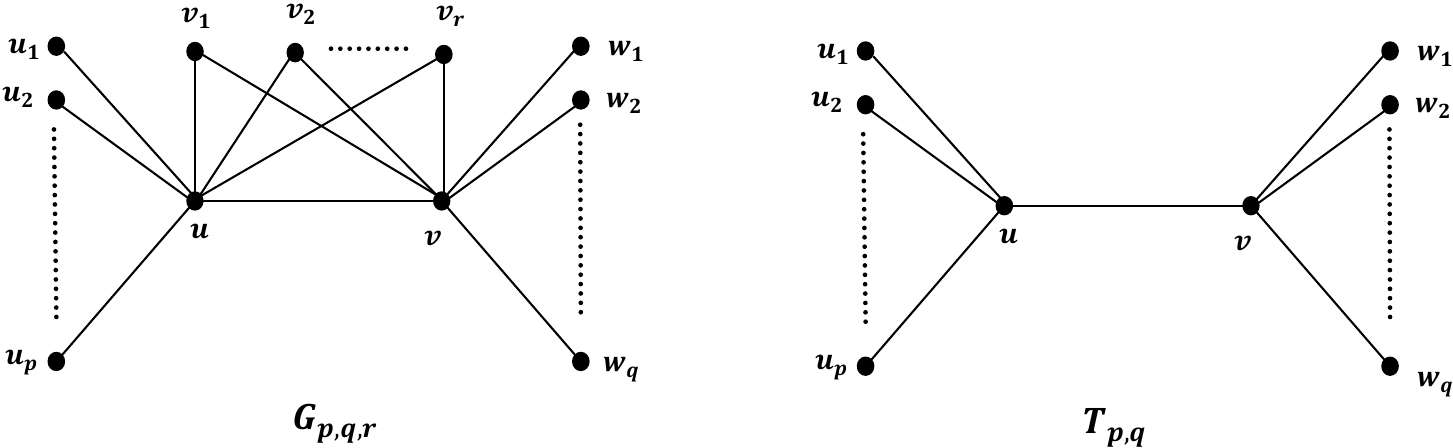}
		\caption{ Graphs $ G_{p,q,r} $ and $ T_{p,q} $} \label{Figg1}
	\end{center}
\end{figure}

Our next objective is to obtain bounds for the energy of a $ \mathbb{T} $-gain graph in terms of the matching number. The corresponding result is known for undirected graphs \cite[Theorem 12]{S_Akbari}. The following key result is used in the proof of \cite[Theorem 12]{S_Akbari}.

\begin{lemma} [{\cite{Pan-Chen-Li}}] \label{lm4.1}For any positive integer $ r $, 
	$ \mathcal{E}(G_{p,q,r})<\mathcal{E}(T_{p+r, q+r}).$
	
\end{lemma}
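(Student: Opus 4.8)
The plan is to represent both energies via the Coulson-type integral of Theorem \ref{Lm4.2} and then compare the two integrands pointwise. Since $\frac{1}{t^2}>0$ and $\log$ is increasing, it suffices to show that the polynomial inside the logarithm associated to $T_{p+r,q+r}$ strictly dominates the one associated to $G_{p,q,r}$ for every $t\neq 0$. Throughout one works with the trivial gain (so $\Re(\varphi(C))=1$), since the statement concerns ordinary graph energy.

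First I would compute the two characteristic polynomials, writing $N=p+q+r+2$ for the order of $G_{p,q,r}$. A key structural observation is that in both $G_{p,q,r}$ and $T_{p+r,q+r}$ every edge is incident to $u$ or $v$, so every matching has size at most $2$; thus each matching polynomial has only three nonzero terms. For the tree $T_{p+r,q+r}$ we have $P_T=m_T$, with $m(T,1)=p+q+2r+1$ and $m(T,2)=(p+r)(q+r)$. For $G_{p,q,r}$ I would apply Theorem \ref{Th4.2}: the only $2$-regular subgraphs are the $r$ triangles on the base edge, each contributing $-2x^{N-3}$, and the $\binom{r}{2}$ four-cycles $u\,w_i\,v\,w_j$, each contributing $-2x^{N-4}$. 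Counting matchings gives $m(G,1)=p+q+2r+1$ and $m(G,2)=pq+pr+qr+r^2-r$; after adding the $-2\binom{r}{2}=-r(r-1)$ correction from the four-cycles, the coefficient of $x^{N-4}$ collapses to the clean value $pq+pr+qr$, while the triangles produce the single odd coefficient $b_3(G)=-2r$.

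Next I would assemble the functions $A(t),B(t)$ of Theorem \ref{Lm4.2}. This yields $A_G(t)=1+(p+q+2r+1)t^2+(pq+pr+qr)t^4$ and $B_G(t)=2r\,t^3$, while the tree gives $B_T(t)=0$ and $A_T(t)=1+(p+q+2r+1)t^2+(p+r)(q+r)t^4$. The crucial algebraic fact is that these differ by exactly $A_T(t)=A_G(t)+r^2t^4$. A direct expansion then gives $A_T^2-(A_G^2+B_G^2)=r^2t^4\big(2+2(p+q+2r-1)t^2+(2(pq+pr+qr)+r^2)t^4\big)$, and since $r\geq 1$ and $p,q\geq 0$ every coefficient inside the bracket is strictly positive; hence the difference is strictly positive for all $t\neq 0$. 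Substituting into the integral representation of Theorem \ref{Lm4.2} for each graph yields $\mathcal{E}(T_{p+r,q+r})>\mathcal{E}(G_{p,q,r})$.

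I expect the main obstacle to be the computation of $P_{G_{p,q,r}}$ itself, specifically the correct enumeration of the $2$-regular subgraphs (the triangles and the four-cycles) and of the size-two matchings, together with verifying the cancellation that reduces the $x^{N-4}$ coefficient to $pq+pr+qr$. Once the polynomials are in hand, the identity $A_T=A_G+r^2t^4$ and the positivity of the resulting difference are routine, and the pointwise domination immediately transfers to the strict energy inequality.
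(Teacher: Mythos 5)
Your proof is correct: the characteristic polynomials of $G_{p,q,r}$ (via the Godsil--Gutman relation, with the $r$ triangles and $\binom{r}{2}$ quadrilaterals as the only $2$-regular subgraphs) and of $T_{p+r,q+r}$ are computed correctly, the identity $A_T(t)=A_G(t)+r^2t^4$ holds, and the resulting pointwise domination of the integrands gives the strict inequality. The paper itself only cites this lemma from the literature, but your argument is essentially the same Coulson-integral comparison the paper carries out for its Lemma \ref{Lm4.4} (the $r=1$ case with arbitrary gain), so it matches the paper's methodology and supplies a valid self-contained proof.
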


Unfortunately,  the above result is not true for $ \mathbb{T} $-gain graphs.

\begin{example}\label{deco-coun-exam}
	
	Consider two graphs $ \Phi=(G_{1,1,2}, \varphi) $ and $ T_{3,3} $ given in Figure \ref{fig5}. For the $ \mathbb{T} $-gain graph $ \Phi $, $ \varphi(\vec{e}_{1,2})=\varphi(\vec{e}_{2,1})=-1$, and gain of all other edges are $ 1 $. Then $ \mathcal{E}(\Phi)\not<\mathcal{E}(T_{3,3})$. 
	\begin{figure} [!htb]
		\begin{center}
			\includegraphics[scale= 0.60]{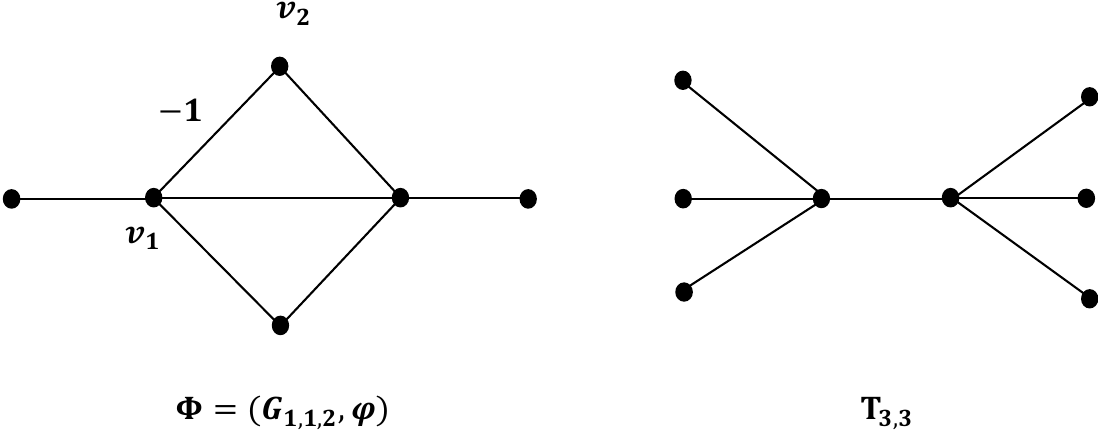}
			\caption{ $ \mathbb{T} $-gain graph $ \Phi=(G_{1,1,2}, \varphi) $ and $T_{3,3}$}
			\label{fig5}
		\end{center}
	\end{figure}
\end{example}

Nevertheless, we have the following result for $ \mathbb{T} $-gain graphs, which is good enough for our purpose. 

\begin{lemma}\label{Lm4.4} Let $ \Phi=(G_{p,q,1}, \varphi) $ be  any $ \mathbb{T} $-gain graph. Then $ \mathcal{E}(\Phi)<\mathcal{E}(T_{p+1, q+1}).$
\end{lemma}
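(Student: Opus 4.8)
The plan is to collapse the whole problem onto a single explicit quartic whose coefficients absorb the only dependence on the gain, and then compare the two energies through the Coulson integral formula of Theorem \ref{Lm4.2}. First I would identify $P_\Phi$. Since $G_{p,q,1}$ contains exactly one cycle, namely the triangle $C$ on the base edge $\{u,v\}$ and the apex $w$, the only nontrivial union of vertex-disjoint cycles available to Theorem \ref{Th4.2} is $K=C$, and $G_{p,q,1}-C$ is just $p+q$ isolated vertices. Hence, writing $c:=\Re(\varphi(C))\in[-1,1]$, Theorem \ref{Th4.2} gives $P_\Phi(x)=m_{G_{p,q,1}}(x)-2c\,x^{p+q}$. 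A short matching-polynomial computation via edge deletion (e.g. removing $uw$, which turns $G_{p,q,1}$ into the double star $T_{p,q+1}$) yields
\[ m_{G_{p,q,1}}(x)=x^{p+q-1}\big[x^4-(p+q+3)x^2+(pq+p+q)\big], \]
so that $P_\Phi(x)=x^{p+q-1}g_c(x)$ with $g_c(x)=x^4-(p+q+3)x^2-2c\,x+(pq+p+q)$. As $T_{p+1,q+1}$ is a tree, its characteristic and matching polynomials coincide, and the same recursion gives $P_{T_{p+1,q+1}}(x)=x^{p+q}h(x)$ with $h(x)=x^4-(p+q+3)x^2+(pq+p+q+1)$.

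Next I would feed these factorizations into \eqref{eq10}. Because $x^{p+q-1}$ (respectively $x^{p+q}$) contributes only a unimodular factor to $\lvert t^N P(i/t)\rvert$, both energies reduce to the ``quartic energies'' of $g_c$ and $h$; applying the computation behind \eqref{eq10} to a quartic $x^4+Ax^2+Bx+D$ produces the integrand argument $(1-At^2+Dt^4)^2+B^2t^6$. With $P(t):=1+(p+q+3)t^2+(pq+p+q)t^4$ this gives
\[ \mathcal{E}(\Phi)=\frac{1}{2\pi}\int_{-\infty}^{\infty}\frac{1}{t^2}\log\!\big[P(t)^2+4c^2t^6\big]\,dt, \qquad \mathcal{E}(T_{p+1,q+1})=\frac{1}{2\pi}\int_{-\infty}^{\infty}\frac{1}{t^2}\log\!\big[(P(t)+t^4)^2\big]\,dt. \]

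The heart of the argument is then a pointwise comparison of the two logarithmic integrands. Their difference of arguments is
\[ (P+t^4)^2-\big(P^2+4c^2t^6\big)=t^4\big(2P+t^4-4c^2t^2\big)=t^4\Big(2+\big(2(p+q+3)-4c^2\big)t^2+\big(2(pq+p+q)+1\big)t^4\Big), \]
and here the hypotheses $c^2\le 1$ and $p,q\ge 0$ do all the work: the coefficient of $t^2$ is $2(p+q+3)-4c^2\ge 2>0$, while the constant term and the $t^4$-coefficient are strictly positive as well, so the parenthesised factor is positive for every real $t$. Consequently the difference is strictly positive for all $t\neq 0$, the $h$-integrand strictly dominates the $g_c$-integrand there, and integrating yields $\mathcal{E}(T_{p+1,q+1})>\mathcal{E}(\Phi)$, as required.

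I expect the main difficulty to be bookkeeping rather than conceptual: pinning down $m_{G_{p,q,1}}(x)$ and the single cycle-term in Theorem \ref{Th4.2} with the correct signs and exponents, and handling the degenerate case $p=q=0$ (where $g_c$ itself acquires a root at $0$, yet the Coulson reduction through \eqref{eq10} still applies verbatim since it only ever sees $\lvert t^4 g_c(i/t)\rvert$). The remaining care is to confirm integrability of both integrands at $t=0$ (where each $\frac{1}{t^2}\log[\cdots]$ has a finite limit) and at $t=\pm\infty$, so that the pointwise strict inequality legitimately upgrades to a strict inequality of the integrals.
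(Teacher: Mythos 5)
Your proposal is correct and follows essentially the same route as the paper: compute $P_\Phi$ via Theorem \ref{Th4.2}, reduce both energies to Coulson integrals of explicit quartic data, and compare the integrands pointwise, using $\Re(\varphi(C))^2\le 1$ to absorb the cycle term. The only cosmetic difference is that you factor out the power of $x$ (noting it contributes a unimodular factor), whereas the paper pads $\Phi$ with an isolated vertex to equalize the degrees before applying \eqref{eq10}; the resulting functions $f_1,f_2$ and the positivity argument are identical.
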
 

\begin{proof}
	
	Let  $\Phi=( G_{p,q,1}, 1)$ be a $ \mathbb{T} $-gain graph on the underlying graph $ G_{p,q,1} $ given in Figure 2. Then $\Phi$ contains exactly one cycle of length $3$, say $ C $. Let $ k=p+q+2 $. Then $ k\geq 2 $. Define  $ \Phi_1=\Phi\cup K_1 $. Then $ \mathcal{E}(\Phi)=\mathcal{E}(\Phi_1) $. Here $ |V(\Phi_1)|=|V(T_{p+1,q+1})|=k+2 $. Since $ T_{p+1, q+1} $ is a tree, so 
	$$ P_{T_{p+1, q+1}}(x)=m_{T_{p+1, q+1}}(x)=x^{k+2}-(k+1)x^{k}+(1+p+q+pq)x^{k-2}.$$
	
	\noindent Also, by Theorem \ref{Th4.2}, we have \begin{align*}
		P_{\Phi_1}(x)&=m_{\Phi_1}(x)+ (-2)^1 \Re(\varphi(C))m_{\Phi_1-C}(x)\\
		&=x^{k+2}-(k+1)x^k+(pq+p+q)x^{k-2}-2m_{(k-1)K_1}(x)\Re(\varphi(C))\\
		&=x^{k+2}-(k+1)x^k+(pq+p+q)x^{k-2}-2x^{k-1}\Re(\varphi(C)).\\
	\end{align*}
	
	By Coulson integral formula \eqref{eq10} for the $ \mathbb{T} $-gain graphs, we have 
	
	$$  \mathcal{E}(T_{p+1,q+1})=\frac{1}{2\pi} \int\limits_{-\infty}^{+\infty} \frac{1}{x^2} \log \left( f_1(x)\right)dx$$
	$$  \mathcal{E}(\Phi_1)=\frac{1}{2\pi} \int\limits_{-\infty}^{+\infty} \frac{1}{x^2} \log \left( f_2(x)\right)dx,$$
	where 
	$$ f_1(x)=\left(1+(k+1)x^2+(1+p+q+pq)x^4\right)^2 $$
	and $$ f_{2}(x)=\left(1+(k+1)x^2+(pq+p+q)x^4 \right)^2+\left( 2\Re(\varphi(C)) x^3\right)^2.$$
	Then,
	\begin{align*}
		f_1(x)-f_2(x)&=x^4\{ 2+2(k+1)x^2+2(1+p+q+pq)x^4-x^4\}-4x^6\Re(\varphi(C))^2.
	\end{align*}
	Since  $ |\Re(\varphi(C))| \leq 1$, so 
	\begin{align*}
		f_1(x)-f_2(x)&\geq x^4\{ 2+2(k+1)x^2+(1+2p+2q+2pq)x^4\}-4x^6\\
		&= x^4\{ 2+(2p+2q+2)x^2+(1+2p+2q+2pq)x^4\}>0.
	\end{align*}
	Now $ f_1(x)-f_2(x)=0$ if and only if $ x=0 $. Therefore, for any non-empty graph, \break $ \mathcal{E}(\Phi)<\mathcal{E}(T_{p+1,q+1}).$ 
\end{proof}

\noindent Let us consider the tree $ T_1 $ of diameter $ 3 $ and the maximum edge degree $ \Delta_e $ given in Figure \ref{fig3} (see Section 2). Now, we compare the energies of  $ (G_{p,q,1},\varphi) $ and $ T_1 $, where $ p,q \geq 0 $. 

\begin{lemma}\label{Lm4.5}
	
	Let $ \Phi=(G_{p,q,1}, \varphi) $ be a $ \mathbb{T} $-gain graph, and $ T_1 $ be a tree given in Figure \ref{fig3} such that the maximum edge degree of $ \Phi $ is less than or equal to the maximum edge degree of $ T_1 $. Then $$ \mathcal{E}(\Phi)< \mathcal{E}(T_1).$$
\end{lemma}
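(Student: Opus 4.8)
The plan is to build a chain of inequalities that funnels $\mathcal{E}(\Phi)$ up to $\mathcal{E}(T_1)$ through explicit double stars, so that the existing Lemmas \ref{Lm4.4} and \ref{Lm2.1} do the heavy lifting. The first thing I would record is the maximum edge degree of $G_{p,q,1}$: the central edge $\{u,v\}$ has edge degree $(d(u)-1)+(d(v)-1) = (p+1)+(q+1) = p+q+2$, while every other edge (the two triangle edges $uw,vw$ and the pendant edges) has strictly smaller edge degree. Hence $\Delta_e(\Phi) = p+q+2$, and the hypothesis becomes $p+q+2 \le \Delta_e$, where $\Delta_e := \Delta_e(T_1)$.

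Next I would invoke Lemma \ref{Lm4.4} to get the strict inequality $\mathcal{E}(\Phi) < \mathcal{E}(T_{p+1,q+1})$; this is the only place strictness enters, and it reduces the problem to comparing a double star against $T_1$. To handle that comparison I would record the closed form for the energy of a double star: since $T_{s,t}$ is a tree, its characteristic polynomial equals its matching polynomial $x^{s+t-2}\left(x^4 - (s+t+1)x^2 + st\right)$, so $\mathcal{E}(T_{s,t}) = 2\sqrt{(s+t+1) + 2\sqrt{st}}$. From this formula it is immediate that attaching an additional pendant vertex to either centre strictly increases the energy, because both $s+t$ and the product $st$ increase.

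With these two facts in hand, the last step is a padding argument. I would attach $\Delta_e - (p+q+2) \ge 0$ extra pendant vertices to one centre of $T_{p+1,q+1}$, producing a double star $T_{s,t}$ with $s+t = \Delta_e$; since $\Delta_e \ge q+2$, both centres keep at least one pendant, so $T_{s,t}$ is a diameter-$3$ tree whose (unique) maximum edge degree is exactly $\Delta_e$, i.e.\ $T_{s,t} \in \mathcal{A}_{\Delta_e,3}$. The monotonicity from the previous step gives $\mathcal{E}(T_{p+1,q+1}) \le \mathcal{E}(T_{s,t})$, and Lemma \ref{Lm2.1} gives $\mathcal{E}(T_{s,t}) \le \mathcal{E}(T_1)$. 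Chaining everything yields $\mathcal{E}(\Phi) < \mathcal{E}(T_{p+1,q+1}) \le \mathcal{E}(T_{s,t}) \le \mathcal{E}(T_1)$, which is the claim.

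The part needing the most care is the bookkeeping in the padding step: one must check that the enlarged double star genuinely still has diameter $3$ (both centres retain a pendant, which follows from $p,q\ge 0$ together with $p+q+2\le\Delta_e$) and that its maximum edge degree equals exactly $\Delta_e$ rather than something larger, so that Lemma \ref{Lm2.1} is applicable. If instead one tried to compare $\mathcal{E}(T_{p+1,q+1})$ with $\mathcal{E}(T_1)$ directly through the formulas of Lemma \ref{Lm2.2}, the obstacle would shift to proving that the extremal value $\mathcal{E}(T_1)$ is monotone increasing in $\Delta_e$ across the even/odd parity split; the padding route sidesteps that case analysis entirely.
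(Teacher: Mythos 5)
Your argument is correct, and it follows the same overall skeleton as the paper's proof --- both start from Lemma \ref{Lm4.4} to get the strict inequality $\mathcal{E}(\Phi)<\mathcal{E}(T_{p+1,q+1})$ and then funnel the double star up to $T_1$ --- but the middle of the chain is handled differently. The paper first \emph{balances} the double star, applying Lemma \ref{Lm2.1} within the class $\mathcal{A}_{p+q+2,3}$ to get $\mathcal{E}(T_{p+1,q+1})\leq \mathcal{E}(T_{\lfloor \frac{p+q+2}{2}\rfloor,\lceil \frac{p+q+2}{2}\rceil})$, and then observes that this balanced double star is an induced subgraph of $T_1$, so Theorem \ref{Th2.10} (the Day--So principal-submatrix inequality) finishes the job. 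You instead \emph{pad} first, growing $T_{p+1,q+1}$ to some $T_{s,t}\in\mathcal{A}_{\Delta_e,3}$ via the closed form $\mathcal{E}(T_{s,t})=2\sqrt{(s+t+1)+2\sqrt{st}}$, and only then invoke Lemma \ref{Lm2.1} at the top level; this trades Theorem \ref{Th2.10} for an explicit eigenvalue computation that the paper never writes down (your formula is easily checked against Lemma \ref{Lm2.2}, so it is reliable). Two harmless imprecisions: when $q=0$ the triangle edge $uw$ of $G_{p,q,1}$ has edge degree $p+2=p+q+2$, so it is not \emph{strictly} smaller than that of the base edge (the value of $\Delta_e(\Phi)$ is unaffected); and your justification of monotonicity ``both $s+t$ and $st$ increase'' silently uses that the opposite centre retains at least one pendant, which does hold here since $t=q+1\geq 1$. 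Neither affects the validity of the proof.
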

\begin{proof}
	Let $ \Phi=(G_{p,q,1}, \varphi) $ be a $ \mathbb{T} $-gain graph on $ G_{p,q,1} $. By Lemma \ref{Lm4.4}, we have $  \mathcal{E}(\Phi)<\mathcal{E}(T_{p+1, q+1}) $.  Note that, by Lemma \ref{Lm2.1}, $ \mathcal{E}(T_{p+1, q+1})\leq \mathcal{E}(T_{\lfloor \frac{p+q+2}{2} \rfloor, \lceil \frac{p+q+2}{2} \rceil}) $. Consider the tree $ T_1 $ given in Figure \ref{fig3}, where $ \Delta_e (T_1)$ is the maximum edge degree of $ T_1 $. Since\break $ \Delta_e (T_1) \geq p+q+2$, so $ T_{\lfloor \frac{p+q+2}{2} \rfloor, \lceil \frac{p+q+2}{2} \rceil} $ is an induced subgraph of $ T_1 $. Therefore, by Theorem \ref{Th2.10}, $ \mathcal{E}(T_{\lfloor \frac{p+q+2}{2} \rfloor, \lceil \frac{p+q+2}{2} \rceil}) \leq \mathcal{E}(T_1) $. By combining the above three inequalities, we have $ \mathcal{E}(\Phi)<\mathcal{E}(T_1) $.
\end{proof}
Next, we compare the energies of  $ (T_{p,q},\varphi) $ and $ T_1 $, where $ p,q \geq 0 $. 
\begin{lemma}\label{Lm4.6}	
	Let $ \Phi=(T_{p,q}, \varphi) $ be a $ \mathbb{T} $-gain graph and $ T_1 $ be a tree given in Figure \ref{fig3} such that the maximum edge degree of $ \Phi $ is less than or equal to the maximum edge degree of $ T_1 $. Then $ \mathcal{E}(\Phi) \leq \mathcal{E}(T_1) $. Equality holds if and only if $ T_{p,q}$ is isomorphic to $T_1 $.
\end{lemma}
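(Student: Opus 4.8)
The plan is to first remove the gain from the picture entirely. Since $T_{p,q}$ is a tree it contains no cycle, so \emph{every} gain function on it is vacuously balanced; by Theorem \ref{Th2.5} we then have $\spec(\Phi)=\spec(T_{p,q})$, and consequently $\mathcal{E}(\Phi)=\mathcal{E}(T_{p,q})$ for every $\varphi$. Thus it suffices to prove $\mathcal{E}(T_{p,q})\le\mathcal{E}(T_1)$ for the underlying tree, with the gain playing no role at all.

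Next I would write both energies in closed form. Because $T_{p,q}$ is a forest, Theorem \ref{Th4.2} (or simply $P_{\Phi}=m_G$ for forests) gives $P_{T_{p,q}}(x)=m_{T_{p,q}}(x)$. The only nonzero matching numbers are $m(T_{p,q},0)=1$, $m(T_{p,q},1)=p+q+1$ and $m(T_{p,q},2)=pq$ (a size-$2$ matching must take one pendant edge at $u$ and one at $v$), so $P_{T_{p,q}}(x)=x^{p+q-2}\bigl(x^{4}-(p+q+1)x^{2}+pq\bigr)$. The four nonzero eigenvalues come in pairs $\pm\sqrt{r_{\pm}}$ with $r_{+}+r_{-}=p+q+1$ and $r_{+}r_{-}=pq$, whence $\mathcal{E}(T_{p,q})=2\sqrt{(p+q+1)+2\sqrt{pq}}$. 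The extremal tree $T_1$ of Figure \ref{fig3} is the balanced double star $T_{a,b}$ with $a=\lceil \Delta_e/2\rceil$, $b=\lfloor \Delta_e/2\rfloor$, $a+b=\Delta_e$, and the same computation (consistent with Lemma \ref{Lm2.2}) gives $\mathcal{E}(T_1)=2\sqrt{(\Delta_e+1)+2\sqrt{ab}}$.

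The inequality then reduces to the elementary estimate $p+q+2\sqrt{pq}\le \Delta_e+2\sqrt{ab}$. The hypothesis that the maximum edge degree of $\Phi$ is $p+q\le \Delta_e$ handles the first summand, and for the product I would use the integer AM--GM bound $pq\le\lfloor\tfrac{p+q}{2}\rfloor\lceil\tfrac{p+q}{2}\rceil$ together with the monotonicity of $s\mapsto\lfloor\tfrac{s}{2}\rfloor\lceil\tfrac{s}{2}\rceil$ to conclude $pq\le\lfloor\tfrac{\Delta_e}{2}\rfloor\lceil\tfrac{\Delta_e}{2}\rceil=ab$. For equality one needs both $p+q=\Delta_e$ and $pq=ab$; combined these force $\{p,q\}=\{a,b\}$, i.e. $T_{p,q}\cong T_1$, and the converse is immediate.

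The computation itself is routine; the two points that need care are (i) noticing that the gain is irrelevant because the underlying graph is a tree, which is exactly what makes the bound hold for \emph{every} $\varphi$, and (ii) the equality analysis. I expect the only genuine subtlety to be the degenerate star cases $p=0$ or $q=0$, where $T_{p,q}$ has diameter $2$ so that Lemma \ref{Lm2.1} does not apply and a direct embedding of $T_{p,q}$ into $T_1$ as an induced subgraph fails (the star has a vertex of degree larger than any vertex of $T_1$). These cases are absorbed uniformly by the explicit energy formula above, which is the reason I favour the closed-form route over an induced-subgraph argument of the type used in Lemma \ref{Lm4.5}.
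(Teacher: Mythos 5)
Your proof is correct, but it takes a genuinely different route from the paper's. The paper argues $\mathcal{E}(\Phi)=\mathcal{E}(T_{p,q})\leq\mathcal{E}(T_{\lfloor\frac{p+q}{2}\rfloor,\lceil\frac{p+q}{2}\rceil})$ via Lemma \ref{Lm2.1} and then embeds the balanced double star as an induced subgraph of $T_1$, invoking Theorem \ref{Th2.10}; the equality case is read off from the equality conditions of those two results. You instead compute everything in closed form: the matching counts $m(T_{p,q},1)=p+q+1$, $m(T_{p,q},2)=pq$ give $P_{T_{p,q}}(x)=x^{p+q-2}\bigl(x^4-(p+q+1)x^2+pq\bigr)$, hence $\mathcal{E}(T_{p,q})=2\sqrt{(p+q+1)+2\sqrt{pq}}$, and the lemma collapses to the elementary estimates $p+q\leq\Delta_e$ and $pq\leq\lfloor\frac{p+q}{2}\rfloor\lceil\frac{p+q}{2}\rceil\leq ab$, with a clean equality analysis ($p+q=\Delta_e$ and $pq=ab$ force $\{p,q\}=\{a,b\}$). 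What your route buys is self-containment and, more importantly, uniform treatment of the degenerate cases $p=0$ or $q=0$: there $T_{p,q}$ is a star of diameter $2$, so Lemma \ref{Lm2.1} (stated only for diameter-$3$ trees in $\mathcal{A}_{\Delta_e,3}$) does not literally apply, and the star cannot be embedded as an induced subgraph of $T_1$ since it has a vertex of degree exceeding $\Delta(T_1)$ --- a case the paper's proof glosses over but which does arise in the decomposition of Lemma \ref{Lm4.7}. The paper's route, on the other hand, reuses the cited machinery and avoids the explicit eigenvalue computation. Both proofs dispose of the gain in the same way, since a tree carries only balanced gains.
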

\begin{proof}
	By Lemma \ref{Lm2.1}, $ \mathcal{E}(\Phi)=\mathcal{E}(T_{p,q})\leq \mathcal{E}(T_{\lfloor \frac{p+q}{2} \rfloor, \lceil \frac{p+q}{2} \rceil}) $. Let $ \Delta_e $ be the maximum edge degree of $ T_1 $. Since $ p+q\leq \Delta_e $, so $ T_{\lfloor \frac{p+q}{2} \rfloor, \lceil \frac{p+q}{2} \rceil} $ is an induced subgraph of $ T_1 $. Therefore, by Theorem \ref{Th2.10}, $ \mathcal{E}(T_{\lfloor \frac{p+q}{2} \rfloor, \lceil \frac{p+q}{2} \rceil}) \leq \mathcal{E}(T_1) $. Thus, $ \mathcal{E}(\Phi)\leq \mathcal{E}(T_1) $. If $ \mathcal{E}(\Phi)=\mathcal{E}(T_1) $, then all the intermediate inequalities will be equal. Therefore, by Theorem \ref{Th2.10} and Lemma \ref{Lm2.1}, $ T_{p,q}$ is isomorphic to $T_1$. 
\end{proof}

In \cite{S_Akbari}, Akbari et al. decomposed an undirected graph $ G $, with matching number $\mu$, into $ \mu $ copies edge-disjoint copies of  $ G_{p,q, r} $, where $ r \geq 0 $. In the next lemma, we decompose $ G $ into $ \mu $ edge-disjoint copies of either $ G_{p,q,0} $ or $ G_{p,q,1} $. This result is crucial in proving the main result of this section.
\begin{lemma}\label{Lm4.7}
	Any graph $ G $ with matching number $ \mu $ can be decomposed into $ \mu $  edge-disjoint copies either $ T_{p,q} $ or $ G_{p,q,1} $.
\end{lemma}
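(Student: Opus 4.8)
The plan is to fix a maximum matching $M=\{e_1,\dots,e_\mu\}$ with $e_i=\{u_i,v_i\}$ and to build exactly one piece $H_i$ around each $e_i$, taking $e_i$ as the base edge of $H_i$. Since each $H_i$ will contain precisely one matching edge, the pieces are automatically edge-disjoint and number exactly $\mu$; so the whole problem reduces to distributing the remaining edges of $G$ among the $H_i$ so that (i) every edge of $H_i$ is incident with $u_i$ or $v_i$, and (ii) at most one vertex is adjacent in $H_i$ to both $u_i$ and $v_i$. Indeed, under (i) the piece $H_i$ is exactly $G_{p_i,q_i,r_i}$, where $p_i,q_i$ count the vertices joined in $H_i$ to only $u_i$ and to only $v_i$, and $r_i$ is the number of retained common neighbours; then (ii) says $r_i\le 1$, i.e. $H_i$ is $T_{p_i,q_i}=G_{p_i,q_i,0}$ or $G_{p_i,q_i,1}$.

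For (i), I first note that the set of $M$-unsaturated (exposed) vertices is independent: an edge joining two exposed vertices could be added to $M$, contradicting maximality. Hence every edge of $G$ is incident with at least one saturated vertex, and a saturated vertex lies on a unique $e_i$. I then assign edges as follows. Each base edge $e_i$ goes to $H_i$. An edge with exactly one exposed endpoint has its other endpoint on a unique $e_i$, and I send it to $H_i$. For an edge $\{a,b\}$ whose endpoints lie on two distinct matching edges, $a\in e_i$ and $b\in e_j$, I will send it to $H_i$ or to $H_j$; either choice keeps it incident with the base edge of the chosen piece, so (i) holds no matter how these cross-edges are resolved.

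The crux is (ii), the control of triangles. A common neighbour $w$ of $u_i,v_i$ that is exposed forces both $\{u_i,w\}$ and $\{v_i,w\}$ into $H_i$; but maximality forbids two such vertices, since for exposed $w_1,w_2$ the walk $w_1\,u_i\,v_i\,w_2$ would be an $M$-augmenting path. Thus each piece receives at most one triangle from an exposed apex. For a common neighbour $w$ of $u_i,v_i$ that is saturated, say $w\in e_j$, the two edges $\{u_i,w\},\{v_i,w\}$ can be split, sending at most one into $H_i$ and the other into $H_j$, which prevents $w$ from becoming an apex of $H_i$. The main obstacle is to carry out this splitting globally and consistently: when two matching edges are mutually cross-adjacent (for instance when $\{u_i,v_i,u_j,v_j\}$ spans a $K_4$), the requirements ``no second apex in $H_i$'' and ``no second apex in $H_j$'' interact, and a naive per-vertex rule can cascade. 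I would resolve this by observing that the constraints couple only pairs of matching edges through their cross-edges, and that the resulting local constraint system is always satisfiable — e.g. for a $K_4$ the diagonal assignment $\{u_i,u_j\},\{v_i,v_j\}\to H_i$ and $\{u_i,v_j\},\{v_i,u_j\}\to H_j$ creates no apex in either piece — and then combining such a consistent choice over all cross-edges with the exposed-edge assignment above. Proving that a globally consistent choice always exists, so that $r_i\le 1$ holds simultaneously for every $i$, is the technical heart of the argument; once it is in place, each $H_i$ is $T_{p_i,q_i}$ or $G_{p_i,q_i,1}$, and the $H_i$ constitute the required decomposition of $G$ into $\mu$ edge-disjoint pieces.
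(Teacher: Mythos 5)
Your overall architecture is the same as the paper's: one piece per matching edge, every remaining edge assigned to a piece whose base edge it meets, and an augmenting-path argument (the walk $w_1\,u_i\,v_i\,w_2$) to cap the number of \emph{exposed} apexes at one. Where you diverge is in how the \emph{saturated} apexes are eliminated. The paper uses an iterative repair: whenever a triangle on base $e_j$ has its apex $t$ lying on another matching edge $e_i$, the two edges $\{x,t\},\{y,t\}$ are shipped from $S_j$ to $S_i$, and this is repeated until no such triangle remains; any surviving apex is then exposed and the augmenting-path argument finishes. (The paper does not justify termination of this process, which is a weakness of its write-up.) You instead propose a one-shot assignment of the cross-edges, but you declare the existence of a globally consistent assignment to be the unproven ``technical heart.''

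In fact the two observations you already make close that step, so you are closer to done than you think. A saturated apex of $H_i$ is a vertex $w$ lying on some $e_j$ with both $\{u_i,w\}$ and $\{v_i,w\}$ assigned to $H_i$; both of these are cross-edges between $e_i$ and $e_j$, so the ``no saturated apex'' constraints decompose into independent constraints, one per unordered pair $\{i,j\}$, each involving only the at most four cross-edges between $e_i$ and $e_j$. For a single pair, your diagonal rule --- $\{u_i,u_j\},\{v_i,v_j\}\to H_i$ and $\{u_i,v_j\},\{v_i,u_j\}\to H_j$, applied to whichever of the four edges are present --- works: each of the four potential apexes ($u_j$ or $v_j$ in $H_i$, $u_i$ or $v_i$ in $H_j$) requires two specific cross-edges on the same side, and the diagonal rule always separates those two; deleting edges from the full $K_4$ configuration only removes potential apexes. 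Since distinct pairs involve disjoint edge sets, the global assignment is simply the union of the pairwise ones, and every piece then has at most its single exposed apex, hence is $T_{p,q}$ or $G_{p,q,1}$. So there is no missing idea and no step that fails, but as submitted the proof asserts rather than proves its central claim; make the pairwise decoupling and the verification of the diagonal rule explicit and the argument is complete --- and arguably cleaner than the paper's, since it needs no termination argument.
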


\begin{proof}
	Let $ G $ be a graph with matching number $\mu $. Let $ \mathcal{M}=\{ e_1, e_2, \dots, e_{\mu}\} $ be a maximum matching of $ G $.  Let  $ H_j $ be a subgraph of $ G $ containing $ e_j $ and its all incidence edges, for $ j=1,2, \cdots, \mu $. Take $ S_1=H_1 $ and $ S_j=H_j\setminus \bigcup\limits_{i=1}^{j-1} E(H_i) $, for $ j=2,3, \dots, \mu $. Note that each $ S_j $ is of the form $ G_{p,q,r} $, where $ r\geq 0 $.
	
	For $ j=1,2, \dots, \mu $, construct the subgraph $ L_j $ from the graph $ S_j$ as follows: Suppose two of the nonmatching edges of any triangle in $S_j$ are incident with a matching edge $e_i$, for $i \neq j$, then remove the edges from $S_j$ and add them to $S_i$. That is, if the union $ S_j\cup S_i $ contains the subgraph $ K $ ( see Figure \ref{fig6}), then keep the edges $ e_{x,t} $ and $ e_{y,t} $ in the subgraph $ S_i $, and discard these edges in $ S_j $.  
	Continue this process until none of the modified subgraphs contain a triangle with two of its edges incident with two matching edges in the subgraph. For $ j=1,2 \dots, \mu $, let $ L_j $ denote the subgraph containing the matching edge $e_j$ obtained at the end of the above process. It is easy to see that all subgraphs $ L_j $ are edge-disjoint, and their union is $ G $.
	
	\begin{figure} [!htb]
		\begin{center}
			\includegraphics[scale= 0.60]{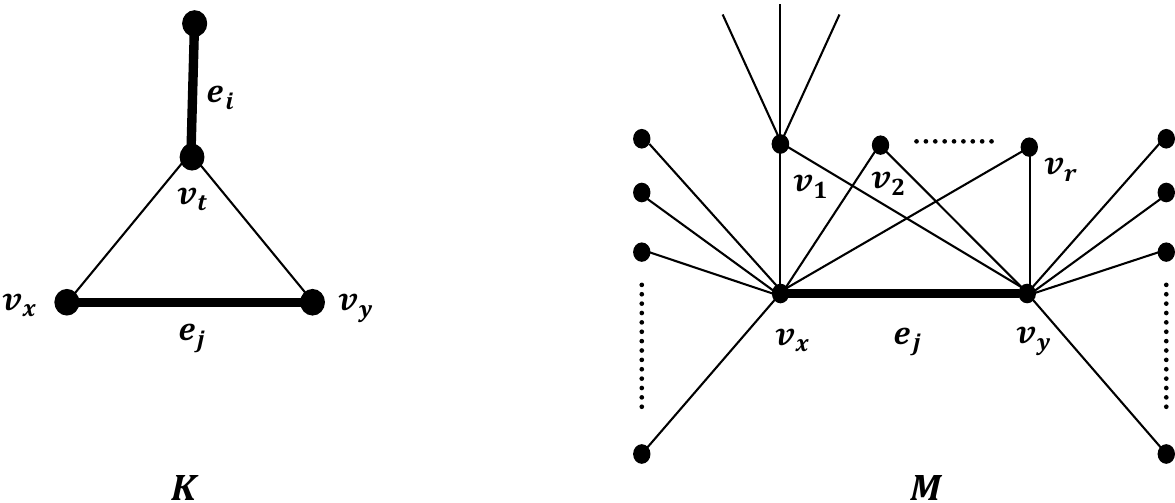}
			\caption{}
			\label{fig6}
		\end{center}
	\end{figure}
	
	Next, we claim that, each $ L_j $ is of the form either $ T_{p,q} $ or $ G_{p,q,1} $. For, suppose that $ L_j $ contains more than one triangle. Let $ v_1,v_2, \dots, v_r $ be the vertices of the triangles (see graph $ M $ in Figure \ref{fig6}), where $ r\geq 2 $. From the construction of $ L_j $,  none of the vertices $ v_1, v_2, \dots, v_r $ is adjacent to any matching edges in $ \mathcal{M} $. Therefore,  the collection $ \mathcal{M'}=\{ e_1,e_2, \dots, e_{j-1}, e_{j+1}, \dots, e_\mu, e_{x,1}, e_{y,2}\} $ is a matching, and it contains more edges than  $\mathcal{M}$, which is a contradiction. Thus, the result follows.
\end{proof}

Now, we are ready to prove the main result of this section, which gives a bound for the energy in terms of matching number and maximum edge degree. Let $P_n$ denote the path on $n$ vertices.
%

\begin{theorem}\label{Th4.3}
	Let $ \Phi=(G,\varphi) $ be any $\mathbb{T}$-gain graph on a simple graph $ G $ with the matching number $ \mu$ and the maximum edge degree $ \Delta_e$. Then we have the following:
	
	\begin{itemize}
		\item[(A)] If $ \Delta_e$ is even, then $ \mathcal{E}(\Phi)\leq 2\mu\sqrt{2\Delta_e+1} $. Equality holds if and only if $ \Phi $ is the disjoint union of $ \mu$ copies of $ (P_2, \varphi) $ and some isolated vertices.
		\item [(B)]  If $ \Delta_e$ is odd, then $ \mathcal{E}(\Phi)\leq \mu\left( \sqrt{b+2\sqrt{b}}+\sqrt{b-2\sqrt{b}}\right) $, where $ b=2(\Delta_e+1) $. Equality holds if and only if $ \Phi$ is $ \mu$ copies of $ (P_3, \varphi) $ and some isolated vertices.
	\end{itemize} 
\end{theorem}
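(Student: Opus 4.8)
The plan is to reduce the global bound to a per-matching-edge bound via an edge-disjoint decomposition together with the subadditivity of the nuclear norm. First I would invoke Lemma \ref{Lm4.7} to write $G = L_1 \cup \cdots \cup L_\mu$ as an edge-disjoint union of $\mu$ subgraphs, each isomorphic to some $T_{p,q}$ or $G_{p,q,1}$ and each containing exactly one edge $e_j$ of a fixed maximum matching. Writing $\Phi_j=(L_j,\varphi)$ for the restriction of the gain and regarding $A(\Phi_j)$ as an $n\times n$ matrix supported on $V(L_j)$, edge-disjointness gives $A(\Phi)=\sum_{j=1}^{\mu}A(\Phi_j)$. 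Since $\mathcal{E}$ is the sum of singular values, Theorem \ref{Th2.8} yields $\mathcal{E}(\Phi)\le \sum_{j=1}^{\mu}\mathcal{E}(\Phi_j)$.

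Next I would bound each summand by the extremal diameter-$3$ tree. Let $T_1$ be the balanced double star of Figure \ref{fig3} with maximum edge degree $\Delta_e$; since $\Delta_e(L_j)\le \Delta_e$, Lemma \ref{Lm4.5} gives $\mathcal{E}(\Phi_j)<\mathcal{E}(T_1)$ whenever $L_j\cong G_{p,q,1}$, while Lemma \ref{Lm4.6} gives $\mathcal{E}(\Phi_j)\le \mathcal{E}(T_1)$ whenever $L_j\cong T_{p,q}$. Summing over $j$ gives $\mathcal{E}(\Phi)\le \mu\,\mathcal{E}(T_1)$, and substituting the closed form of $\mathcal{E}(T_1)$ from Lemma \ref{Lm2.2} produces $2\mu\sqrt{2\Delta_e+1}$ when $\Delta_e$ is even and $\mu\left(\sqrt{b+2\sqrt b}+\sqrt{b-2\sqrt b}\right)$ with $b=2(\Delta_e+1)$ when $\Delta_e$ is odd. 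This settles the inequalities in (A) and (B) at once.

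For the equality statements, the sufficiency is a direct computation: $\mu$ disjoint copies of $(P_2,\varphi)$ have $\Delta_e=0$ and energy $2\mu=2\mu\sqrt{1}$, while $\mu$ disjoint copies of $(P_3,\varphi)$ have $\Delta_e=1$ and energy $2\sqrt2\,\mu$, which matches the odd formula at $b=4$. For necessity, equality forces both displayed inequalities to be simultaneously tight. Tightness of the per-piece bound, together with the strict inequality of Lemma \ref{Lm4.5}, excludes every $G_{p,q,1}$-piece, so each $L_j\cong T_{p,q}$; the equality clause of Lemma \ref{Lm4.6} then forces $L_j\cong T_1$ for all $j$.

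The hard part is the equality analysis of the subadditivity step, since Theorem \ref{Th2.8} is quoted without an equality criterion. I would establish separately that $\mathcal{E}\big(\sum_j A(\Phi_j)\big)=\sum_j\mathcal{E}(\Phi_j)$ with each $L_j\cong T_1$ can hold only when the pieces $L_j$ are pairwise vertex-disjoint, for instance by feeding the decomposition into the Coulson-type representation of Theorem \ref{Lm4.2} and checking that any shared vertex strictly lowers the integrand, hence the total energy, relative to the disjoint union. Granting vertex-disjointness, $A(\Phi)$ becomes block diagonal and the matching number is additive, so $\mu(G)=\mu\cdot\mu(T_1)$. If $\Delta_e\ge 2$ (even) or $\Delta_e\ge 3$ (odd) then $T_1=T_{\lfloor\Delta_e/2\rfloor,\lceil\Delta_e/2\rceil}$ is a balanced double star with $\mu(T_1)=2$, forcing $\mu(G)=2\mu\ne\mu$, a contradiction. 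Hence $\Delta_e=0$ (even) or $\Delta_e=1$ (odd), i.e. $T_1=P_2$ or $T_1=P_3$, which yields exactly the extremal families in (A) and (B).
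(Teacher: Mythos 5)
Your derivation of the inequalities is exactly the paper's: decompose via Lemma \ref{Lm4.7}, apply Theorem \ref{Th2.8} to get $\mathcal{E}(\Phi)\le\sum_{j}\mathcal{E}(L_j,\varphi)$, bound each piece by $\mathcal{E}(T_1)$ through Lemmas \ref{Lm4.5} and \ref{Lm4.6}, and finish with Lemma \ref{Lm2.2}. The first half of your equality analysis (each $\mathcal{E}(L_j,\varphi)=\mathcal{E}(T_1)$ forces $L_j\cong T_1$, and no piece can be a $G_{p,q,1}$ because Lemma \ref{Lm4.5} is strict) also matches the paper.

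The gap is the step you yourself flag as ``the hard part'': showing that the pieces $L_j$ are pairwise vertex-disjoint. Your proposed fix --- feeding the decomposition into the Coulson representation of Theorem \ref{Lm4.2} and checking that a shared vertex strictly lowers the integrand --- is not carried out and is not a routine verification: the integrand for $\mathcal{E}(\Phi)$ is built from the coefficients of $P_\Phi$, which has no simple expression in terms of the polynomials $P_{(L_j,\varphi)}$ once the pieces overlap, so there is nothing obvious to compare. More importantly, no equality criterion for Theorem \ref{Th2.8} is needed at all. The paper closes this step combinatorially: once $L_j\cong T_1$, the matching edge $e_j$ has edge degree exactly $\Delta_e$ inside $L_j$, while $d(e_j)\le\Delta_e$ in $G$; so if $L_j$ were not a connected component there would be an edge $\bar e$ incident with $e_j$ but lying in some other piece $L_t$, giving $\Delta_e\le d(e_j)-1\le\Delta_e-1$, a contradiction. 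Granting the components, your final step (matching-number additivity forces $\mu(T_1)=1$, hence $\Delta_e\le 1$ and $T_1\in\{P_2,P_3\}$) is a valid variant of the paper's ``enlarge the matching'' argument, and your sufficiency computations for $P_2$ and $P_3$ are correct. So the skeleton is right, but as written the necessity direction of the equality characterization is incomplete, and the analytic route you sketch for repairing it is unlikely to be the right tool.
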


\begin{proof}
	Let $ \Phi=(G, \varphi) $ be a $ \mathbb{T} $-gain graph  with matching number $ \mu $. Let $ \mathcal{M}=\{ e_1, e_2, \cdots, e_{\mu} \} $ be a maximum matching of $ \Phi $, and let $ d(e_j) $  denote the edge degree of $ e_j $ in $ \Phi $ for $ j=1,2, \cdots,\mu$. Then, by Lemma \ref{Lm4.7}, we can decompose $ \Phi $ into edge-disjoint subgraphs $ (L_j, \varphi) $, for $ j=1,2, \dots, \mu $. As $ A(\Phi)=\sum\limits_{j=1}^{\mu}A(L_j, \varphi) $, by  Theorem \ref{Th2.8}, we get $ \mathcal{E}(\Phi)\leq \sum\limits_{j=1}^{\mu}\mathcal{E}(L_j, \varphi) $. Now each subgraph $ L_j $ is of the form $ G_{p_j, q_j,r}$, with either $ r=0 $ or $ r=1 $, and $ p_j+q_j+2r\leq d(e_j)\leq \Delta_{e} $. Now, by Lemma \ref{Lm4.5} and Lemma \ref{Lm4.6}, $ \mathcal{E}(G_{p_j,q_j, r}, \varphi) \leq \mathcal{E}(T_1)$, for $ j=1, 2, \cdots, \mu $, where $T_1$ is given in Figure \ref{fig3}. Thus $ \mathcal{E}(\Phi)\leq \mu \mathcal{E}(T_1) $. Hence, by Lemma \ref{Lm2.2}, the inequalities in $ (A) $ and $ (B) $ follows. 
	
	Suppose $ \mathcal{E}(\Phi)=\mu \mathcal{E}(T_1) $. Then $ \mathcal{E}(L_j, \varphi)=\mathcal{E}(T_1) $, for $ j=1,2, \cdots, \mu$. Also, by Lemma \ref{Lm4.5}, $ L_j $ can not be the form of $ G_{p_j,q_j,1} $. Thus each $ L_j $ is of the from $ T_{p_j,q_j} $ for $ j=1,2,\cdots, \mu $. By Lemma \ref{Lm4.6}, $ L_j=T_1 $, for $ j=1,2, \dots, \mu $. Each subgraph $ L_j $ is a component of $ G $. For, suppose that $ L_j $ is not a connected component of $ G $. Then there exists an edge $ \bar{e}$ incident with both the matching edges $ e_j $ and $ e_t $ for some $ t\ne j$, otherwise we can find a matching larger than $ \mu $. Without loss of generality, let us assume that $ \bar{e} $ is an edge in $ L_t $. The degree of $ e_j $ in $ L_j $ is less than or equal to $ d(e_j)-1 $. Since $ L_j=T_1 $, so the degree of $ e_j $ in $ L_j $ is $ \Delta_e $. Therefore, $ \Delta_e\leq d(e_j)-1\leq \Delta_e-1 $, which is a contradiction. So $ L_j $ is a connected component. Therefore all the subgraphs $ L_j $, for $ j=1,2, \dots, \mu $ are connected components of $ G $ with maximum edge degree $ \Delta_e $. Suppose that  $ \Delta_{e}\geq 2 $ for some matching edge $e$. Then, by removing the edge $e$ from $\mathcal{M}$ and adding the two edges that are incident with the end vertices of $e$, we get a matching size larger than $ \mu $; this is not possible, as $\mathcal{M}$ is a maximum matching. Thus $ \Delta_e\leq 1 $. If $ \Delta_e=1 $, then $ T_1=P_3 $ and if $ \Delta_e=0 $, then $ T_1=P_2 $. Hence, the result follows.
\end{proof}

\begin{corollary}\label{Cor4.5}
	Let $ \Phi=(G,\varphi) $ be any $\mathbb{T}$-gain graph on a simple graph $ G $ with the matching number $ \mu$ and the maximum edge degree $ \Delta_e$. Then 
	\begin{equation*}
		\mathcal{E}(\Phi)\leq 2\mu\sqrt{2\Delta_e+1}.
	\end{equation*}
\end{corollary}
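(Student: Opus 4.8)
The plan is to deduce the corollary directly from Theorem \ref{Th4.3}, which already establishes the two-case bound. In the case where $\Delta_e$ is even, part (A) of Theorem \ref{Th4.3} gives exactly $\mathcal{E}(\Phi)\leq 2\mu\sqrt{2\Delta_e+1}$, so nothing further is needed. The only work lies in the odd case, where part (B) produces the bound $\mu\left(\sqrt{b+2\sqrt{b}}+\sqrt{b-2\sqrt{b}}\right)$ with $b=2(\Delta_e+1)$; I would show that this quantity is itself at most $2\mu\sqrt{2\Delta_e+1}$, which then yields the uniform statement of the corollary.

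To carry out the odd case, I would set $s=\sqrt{b+2\sqrt{b}}+\sqrt{b-2\sqrt{b}}$ and compare $s$ with $2\sqrt{2\Delta_e+1}$ by squaring. Squaring gives $s^2=2b+2\sqrt{b^2-4b}$, while the target satisfies $\left(2\sqrt{2\Delta_e+1}\right)^2=4(2\Delta_e+1)$. Substituting $b=2\Delta_e+2$ reduces the desired inequality $s^2\leq 4(2\Delta_e+1)$ first to $\sqrt{b^2-4b}\leq 2\Delta_e$ and then to $b^2-4b\leq 4\Delta_e^2$. Since a direct computation gives $b^2-4b=4\Delta_e^2-4$, this last inequality is simply $-4\leq 0$, which always holds. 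Taking square roots, $s\leq 2\sqrt{2\Delta_e+1}$, and multiplying through by $\mu$ completes the odd case.

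There is essentially no serious obstacle here; the corollary is a purely algebraic consequence of Theorem \ref{Th4.3}, and the only mild care needed is in keeping track of the substitution $b=2(\Delta_e+1)$ when collapsing the nested radicals. One point worth recording is that the comparison in the odd case is in fact strict (the reduced inequality is $-4<0$), so the bound $2\mu\sqrt{2\Delta_e+1}$ is never attained when $\Delta_e$ is odd; consequently equality in the corollary can occur only in the even case, precisely as characterized in Theorem \ref{Th4.3}(A).
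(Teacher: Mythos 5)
Your proposal is correct and follows essentially the same route as the paper: both deduce the corollary from Theorem \ref{Th4.3} and, in the odd case, show by squaring that $\left(\sqrt{b+2\sqrt{b}}+\sqrt{b-2\sqrt{b}}\right)^2=2b+2\sqrt{b^2-4b}<4(2\Delta_e+1)$ with $b=2(\Delta_e+1)$, your computation $b^2-4b=4\Delta_e^2-4$ being just a rewriting of the paper's $(b-2)^2-4$. Your closing observation that the inequality is strict in the odd case, so equality can only occur as in Theorem \ref{Th4.3}(A), is a correct and worthwhile addition.
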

\begin{proof}
	Note that, if $ c=\sqrt{b+2\sqrt{b}} +\sqrt{b-2\sqrt{b}}>0$, where $ b=2(\Delta_{e}+1) $, then $ c^2=2b+2\sqrt{(b-2)^2-4}<4(b-1) $. Therefore, $ \sqrt{b+2\sqrt{b}} +\sqrt{b-2\sqrt{b}}<2\sqrt{2\Delta_{e}+1} $.
\end{proof}

\begin{remark}
	Hermitian adjacency matrix of a mixed graph and adjacency matrix of a signed graph are particular cases of the adjacency matrix of a $ \mathbb{T} $-gain graph. Therefore, all the above results also hold for mixed and signed graphs.
\end{remark}
	\section{Extremal families for the energy of unicyclic $ \mathbb{T} $-gain graphs}\label{extremum_energy}

For any simple graph $ G $,  let  $ \mathcal{T}_G $  denote the collection of all  $ \mathbb{T} $-gain graphs on the underlying graph $ G $. This section studies the extremum problem for the energies of $ \mathbb{T} $-gain graphs in $ \mathcal{T}_G $. In \cite{Haemers},  Haemers conjectured that among all the signed graphs defined of $K_n$, the complete graph on $n$ vertices, all one edge weights has the least energy. That is,   $ \mathcal{E}(K_{n})\leq \mathcal{E}(K_{n}, \psi) $ for any signed graph $ (K_{n}, \psi) $. In \cite{Akbari}, Akbari et al. proved this conjecture. In this section, first, we observe that the counterpart of this result for $ \mathbb{T} $-gain graphs need not be true. 

%

\begin{example}{\em
		Let $ \Phi=(K_3, \varphi) $ be a $ \mathbb{T} $-gain graph on $ K_3 $ with vertex set $ \{ v_1,v_2,v_3\} $ such that $ \varphi(\vec{e}_{1,2})=i $, $ \varphi(\vec{e}_{2,3})=\varphi(\vec{e}_{3,1})=1 $. Then $ A(\Phi)=\begin{pmatrix}
			0 & i & 1\\
			-i & 0 & 1\\
			1& 1 & 0
		\end{pmatrix} $. Thus $ \mathcal{E}(\Phi)=2\sqrt{3}\approx 3.464 $. Also $ \mathcal{E}(K_3)=4 $. So $ \mathcal{E}(K_3)>\mathcal{E}(K_3, \varphi) $.}
\end{example}

From the following example one can see that  for a graph $ G $  the energy $ \mathcal{E}(G) $ need not be the minimum/maximum energy among  all $ \mathbb{T} $-gain graphs defined on $G$.
\begin{example}{\em
		In Figure \ref{fig1.1}, we can see that $ \Phi_{1} $ and $ \Phi_{2} $ are two $ \mathbb{T} $-gain graphs on a same underlying graph $ G $, where $ \mathcal{E}(\Phi_{1})=6.133, \mathcal{E}(\Phi_{2})=5.9939$ and $ \mathcal{E}(G)=6.0409 $. Thus, $ \mathcal{E}(\Phi_2)<\mathcal{E}(G)< \mathcal{E}(\Phi_{1}) $.}
\end{example}

\begin{figure} [!htb]
	\begin{center}
		\includegraphics[scale= 0.60]{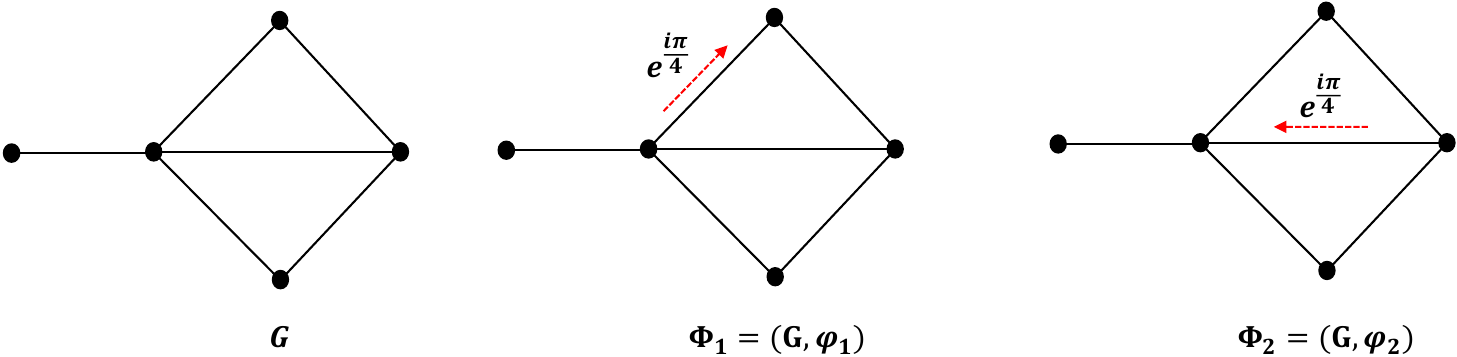}
		\caption{ The gain graphs $ \Phi_{1}$ and $ \Phi_{2}$ have the same underlying graph $ G $.} \label{fig1.1}
	\end{center}
\end{figure}

Motivated by this observation, it is natural to study the graphs for which the energy of the underlying graph attains either maximum or minimum energy among the energies of all $ \mathbb{T} $-gain graphs defined on $ G $. In \cite{our-paper3}, we proved that $ \mathcal{E}(K_{n,n})\leq \mathcal{E}(K_{n,n}, \varphi) $, where $ (K_{n,n}, \varphi) $
is any $ \mathbb{T} $-gain graph on $ K_{n,n} $. Here, we present another class of graphs, namely unicyclic graphs, for which its energy attains extremum energy overall $ \mathbb{T} $-gain graphs associated with it. The girth of a graph $G$ is the length of a shortest cycle in $G$.

\begin{theorem}\label{Th5.1}
	Let $ \Phi=(G, \varphi) $ be a $ \mathbb{T} $-gain graph on a unicyclic graph $ G $ with girth $ r $.  
	\begin{enumerate}
		\item[(i)] If $ r\equiv 0 (\mod  4 )$, then $ \mathcal{E}(\Phi) \geq \mathcal{E}(G) $. Further, equality holds if and only if $ \Phi $ is balanced.
		\item[(ii)]  If $ r \equiv 1 (\mod  4 )$ or $ r\equiv 3  (\mod  4 )$, then $  \mathcal{E}(\Phi) \leq \mathcal{E}(G) $. Further, equality holds if and only if either $ \Phi $ or $ -\Phi $ is balanced.
		\item[(iii)] If $ r \equiv 2 (\mod  4 )$, then $  \mathcal{E}(\Phi) \leq \mathcal{E}(G) $. Further, equality holds if and only if  $ \Phi $  is balanced.
	\end{enumerate}
\end{theorem}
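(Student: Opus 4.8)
The plan is to reduce everything to the single scalar parameter $c:=\Re(\varphi(C))\in[-1,1]$, where $C$ is the unique cycle of $G$, and then to compare the Coulson integrands of $\Phi$ and of $G$ pointwise in the integration variable. Since $G$ is unicyclic, the only nontrivial $2$-regular subgraph of $G$ is $C$ itself, so the sum in Theorem \ref{Th4.2} collapses to
\begin{equation*}
	P_\Phi(x)=m_G(x)-2c\,m_{G-C}(x).
\end{equation*}
I would substitute this into the Coulson formula \eqref{eq6}. Writing $p(t):=\sum_{j\ge0}m(G,j)t^{2j}$ and $q(t):=\sum_{j\ge0}m(G-C,j)t^{2j}$ (both polynomials in $t^{2}$ with nonnegative coefficients and constant term $1$), a direct computation using $i^{-2j}=(-1)^{j}$ gives $t^{n}m_G(i/t)=i^{n}p(t)$ and $t^{n}m_{G-C}(i/t)=i^{n-r}t^{r}q(t)$, whence
\begin{equation*}
	\bigl|t^{n}P_\Phi(i/t)\bigr|=\bigl|\,p(t)-2c\,i^{-r}t^{r}q(t)\,\bigr|,
\end{equation*}
so that $\mathcal E(\Phi)=\frac1\pi\int_{-\infty}^{\infty}t^{-2}\log\bigl|p(t)-2c\,i^{-r}t^{r}q(t)\bigr|\,dt$. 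The underlying graph $G$ corresponds to $c=1$.

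The value $i^{-r}$ drives the case analysis. When $r\equiv1,3\pmod4$, $i^{-r}=\pm i$ is purely imaginary, so the integrand modulus becomes $\sqrt{p(t)^{2}+4c^{2}t^{2r}q(t)^{2}}$, which is nondecreasing in $c^{2}\le1$; hence it is pointwise at most its value at $c=1$, giving $\mathcal E(\Phi)\le\mathcal E(G)$. When $r\equiv2\pmod4$, $i^{-r}=-1$ and the modulus is $|p(t)+2c\,t^{r}q(t)|$; since $p(t)\ge0$ and $t^{r}q(t)\ge0$ (as $r$ is even), the triangle inequality bounds it by $p(t)+2t^{r}q(t)$, the $c=1$ value, so again $\mathcal E(\Phi)\le\mathcal E(G)$. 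In both regimes the equality discussion reduces to when the pointwise inequality is tight on a set of positive measure, and since $q(t)>0$ for $t\ne0$ this forces $c^{2}=1$ in the odd case and $c=1$ in the $r\equiv2$ case.

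The genuinely delicate case is $r\equiv0\pmod4$, where $i^{-r}=1$ and the modulus is $|p(t)-2c\,t^{r}q(t)|$: the subtracted term spoils the naive triangle-inequality bound, so a sign must be controlled. The key auxiliary fact I would establish first is that $p(t)>2t^{r}q(t)$ for every real $t$. This follows from the real-rootedness of $P_G$: since $A(G)$ is real symmetric its eigenvalues $\lambda_j$ are real, so $i/t$ (purely imaginary, $t\ne0$) is never a root and $|t^{n}P_G(i/t)|=\prod_j\sqrt{1+\lambda_j^{2}t^{2}}>0$; as the real quantity $p(t)-2t^{r}q(t)=i^{-n}t^{n}P_G(i/t)$ never vanishes, is continuous, and equals $1$ at $t=0$, it stays strictly positive. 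Granting this, for $c\le1$ we get $p(t)-2c\,t^{r}q(t)\ge p(t)-2t^{r}q(t)>0$, so the integrand is pointwise at least its $c=1$ value and $\mathcal E(\Phi)\ge\mathcal E(G)$, with equality iff $c=1$.

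Finally I would translate the conditions $c=1$ and $c^{2}=1$ into balance statements. Because $G$ is unicyclic, $\Phi$ is balanced iff $\varphi(C)=1$, i.e. $c=1$; moreover the gain of $C$ in $-\Phi$ is $(-1)^{r}\varphi(C)$, so $-\Phi$ is balanced iff $\varphi(C)=(-1)^{r}$. For even $r$ this again reads $\varphi(C)=1$, so in parts (i) and (iii) equality means exactly that $\Phi$ is balanced, whereas for odd $r$ it reads $\varphi(C)=-1$; thus $c^{2}=1$ is precisely the statement ``$\Phi$ or $-\Phi$ is balanced,'' matching part (ii). The main obstacle is the sign control in the $r\equiv0\pmod4$ case, namely the strict positivity $p(t)>2t^{r}q(t)$; once that auxiliary inequality is secured, all three parts follow from pointwise integrand comparisons.
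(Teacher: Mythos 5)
Your proof is correct, and it follows the same overall strategy as the paper: isolate the dependence of $P_\Phi$ on the single parameter $c=\Re(\varphi(C))$, feed it into the Coulson integral, and let the residue of $r$ modulo $4$ govern the sign analysis. The execution differs in two respects. First, you work from the matching-polynomial identity $P_\Phi=m_G-2c\,m_{G-C}$ of Theorem \ref{Th4.2} and compare the Coulson integrands pointwise in $t$, whereas the paper compares the Sachs coefficients $b_k(\Phi)$ and $b_k(G)$ (or $b_k(-G)$) term by term via Lemma \ref{Lm2.3} and then appeals to the monotonicity of the integral \eqref{eq10} in those coefficients; for a unicyclic graph these encode identical information, so this part is a repackaging rather than a new idea. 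Second, and more substantively, in the delicate case $r\equiv 0\pmod 4$ the paper rests on the coefficient inequalities $(-1)^kb_{2k}(G)\ge 0$, i.e.\ $m(G,k)\ge 2m(G-C_r,k-2s)$, which it asserts without justification, while you replace this with the pointwise positivity $p(t)>2t^{r}q(t)$ deduced directly from the real-rootedness of $P_G$ (a purely imaginary number is never an eigenvalue of $A(G)$, so the real continuous quantity $i^{-n}t^nP_G(i/t)$ cannot change sign); this is a clean, self-contained substitute for the classical sign pattern of bipartite characteristic polynomials and is the one genuinely new ingredient in your write-up. Your equality analysis, translating $c=1$ (resp.\ $c^2=1$) into balance of $\Phi$ (resp.\ of $\Phi$ or $-\Phi$) via the fact that the gain of $C$ in $-\Phi$ is $(-1)^{r}\varphi(C)$, matches the paper's conclusions in all three cases.
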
 	

\begin{proof}
	Let  $ C_r $ be the unique cycle in $ G $. If $ P_{\Phi}(x)=x^n+b_1(\Phi)x^{n-1}+b_2(\Phi)x^{n-2}+\cdots+b_n(\Phi)$ is the characteristic polynomial of $ \Phi $, then, by Lemma \ref{Lm2.3}, the coefficients are given by:   $$ b_j(\Phi)=\sum\limits_{H\in \mathcal{H}_j}(-1)^{n(H)}2^{c(H)}\prod\limits_{C\in C(H)}\Re(\varphi(C)),~~~j=1,2,\cdots,n.$$ Now, we consider the following cases.
	
	\noindent\textbf{Case (i):} Let $ r\equiv 0 $ (mod $ 4 $).	Since $ r$ is even, so $G$ is bipartite. Therefore, for any $\Phi = (G, \varphi)$, by Theorem \ref{Th2.11}, the eigenvalues of $ \Phi $ are symmetric about the origin. Then $ b_{2k+1}(\Phi)= b_{2k+1}(G)=0$  for $ k=0,1, \cdots, \lfloor \frac{n-1}{2}\rfloor $. Also  $ (-1)^kb_{2k}(\Phi)\geq0 $ and $ (-1)^kb_{2k}(G)\geq0 $ for $ k=1,2,\cdots, \lfloor \frac{n}{2} \rfloor $. Let $ r=4s $ for some positive integer $ s $. Then,  $$ b_{2k}(\Phi)=\left\{ \begin{array}{ccc}
		(-1)^km(\Phi,k) & \mbox{if}& 2k<r\\
		(-1)^km(\Phi,k)+(-1)^{k-2s+1}2\Re(\varphi(C_r))m(\Phi-C_r, k-2s) & \mbox{if} & 2k\geq r,
	\end{array}\right . $$for  $ k=1,2,\cdots, \lfloor \frac{n}{2} \rfloor .$ 	Thus $ (-1)^{k}b_{2k}(\Phi)=(-1)^{k}b_{2k}(G)=m(\Phi,k)\geq 0 $ for $ 2k<r $.	Also, $ (-1)^kb_{2k}(\Phi)=m(\Phi,k)-2\Re(\varphi(C_r))m(\Phi-C_r,k-2s) $ and  $(-1)^kb_{2k}(G)=m(\Phi,k)-2m(\Phi-C_r,k-2s)$, for $ 2k\geq r $. Now $ \Re(\varphi(C_r))\leq 1 $, and equality holds if and only if $ \Phi $ is balanced. Then $ (-1)^kb_{2k}(\Phi)\geq (-1)^kb_{2k}(G) $, for $ 2k\geq r $ and equality holds if and only if $ \Phi $ is balanced. Thus, by Lemma \ref{Lm4.2}, $ \mathcal{E}(\Phi) \geq \mathcal{E}(G) $ and equality holds if and only if $ \Phi $ is balanced.
	
	\vspace{0.4cm}
	\noindent\textbf{Case (ii):} Either $ r \equiv 1$ (mod $ 4 $) or $ r\equiv 3 $ (mod $ 4 $).

	\noindent\textbf{Subcase (a):} Let $ r \equiv 1 $ (mod $ 4 $). Then $ r=4s+1 $ for some nonnegative integer $ s $. Note that $ b_{2k}(\Phi)=(-1)^km(\Phi,k)=b_{2k}(G) $ for $ k=1,2,\cdots, \lfloor \frac{n}{2} \rfloor $. Now, for  $ k=0,1,\cdots, \lfloor \frac{n-1}{2} \rfloor $, \begin{equation*}
		b_{2k+1}(\Phi)=\left\{ \begin{array}{ccc}
			0 & \mbox{if}& 2k+1<r\\
			(-1)^{k-2s+1}2\Re(\varphi(C_r))m(\Phi-C_r, k-2s) & \mbox{if} & 2k+1\geq r
		\end{array}\right.
	\end{equation*}
	That is,
	\begin{equation}\label{eq14}
		(-1)^k	b_{2k+1}(\Phi)=\left\{ \begin{array}{ccc}
			0 & \mbox{if}& 2k+1<r\\
			-2\Re(\varphi(C_r))m(\Phi-C_r, k-2s) & \mbox{if} & 2k+1\geq r
		\end{array}\right.
	\end{equation}
	
	Consider $ -G $ as a $ \mathbb{T} $-gain graph on $ G $ with the gain of each edge is $ -1 $. Note that,  $\mathcal{E}(G)=\mathcal{E}(-G)$. Using \eqref{eq14}, we have 
	\begin{equation}\label{eq15}
		(-1)^k	b_{2k+1}(-G)=\left\{ \begin{array}{ccc}
			0 & \mbox{if}& 2k+1<r\\
			2m(\Phi-C_r, k-2s) & \mbox{if} & 2k+1\geq r
		\end{array}\right.
	\end{equation}
	It is easy to see that $(-1)^k	b_{2k+1}(-G)=-(-1)^k	b_{2k+1}(G)\geq 0  $. Since $ |\Re(\varphi(C_r))|\leq 1 $, by comparing \eqref{eq14} and \eqref{eq15}, we have $ |(-1)^k	b_{2k+1}(\Phi)|\leq (-1)^k	b_{2k+1}(-G) $ for $ k=0,1,\cdots, \lfloor\frac{n-1}{2} \rfloor $. Equality occur if and only if either $ \Re(\varphi(C_r))=1 $ or $ \Re(\varphi(C_r))=-1 $. That is, equality holds if and only if either $ \Phi $ is balanced or $ -\Phi $ is balanced. Therefore, by Lemma \ref{Lm4.2}, $ \mathcal{E}(\Phi)\leq \mathcal{E}(G) $ and equality holds if and only if either $ \Phi $ is balanced or $ -\Phi $ is balanced.  
	
	\noindent\textbf{Subcase (b):} Let $ r \equiv 3 $ (mod $ 4 $). Let $ r=4s+3 $ for some nonnegative integer $s $. Then similar to Subcase (b), $(-1)^k b_{2k}(\Phi)=m(\Phi,k)=(-1)^kb_{2k}(G) $, for $ k=1,2,\cdots, \lfloor \frac{n}{2} \rfloor $ and  $ |(-1)^k	b_{2k+1}(\Phi)|\leq (-1)^k	b_{2k+1}(G) $, for $ k=0,1,\cdots, \lfloor\frac{n-1}{2} \rfloor $. Hence, by Lemma \ref{Lm4.2}, $ \mathcal{E}(\Phi)\leq \mathcal{E}(G) $. Further equality holds if and only if either $ \Phi $ is balanced or $ -\Phi $ is balanced.

	\noindent\textbf{Case(iii):} Let $ r \equiv 2 $ (mod $ 4 $). Since $ r $ is even,  $ b_{2k+1}(\Phi)= b_{2k+1}(G)=0$  for $ k=0,1, \cdots, \lfloor \frac{n-1}{2}\rfloor $. Also  $ (-1)^kb_{2k}(\Phi)\geq0 $ and $ (-1)^kb_{2k}(G)\geq0 $ for $ k=1,2,\cdots, \lfloor \frac{n}{2} \rfloor $. Let $ r=4s+2 $ for some nonnegative integer $ s $. Then similar to previous case, $ (-1)^{k}b_{2k}(\Phi)=(-1)^{k}b_{2k}(G)$ for $ 2k<r $. Also $ (-1)^kb_{2k}(\Phi)\leq (-1)^kb_{2k}(G) $ for $ 2k\geq r $ and equality holds if and only if $ \Phi $ is balanced. Therefore, by Lemma \ref{Lm4.2}, $ \mathcal{E}(\Phi)\leq \mathcal{E}(G) $, and  $ \mathcal{E}(\Phi) = \mathcal{E}(G) $ holds if and only if $ \Phi $ is balanced.
	
\end{proof}
We present an immediate consequence of the above result.

\begin{corollary}\label{Cor5.1}
	Let $ \Phi=(C_{n}, \varphi)$ be any $\mathbb{T}$-gain graph on a cycle  $ C_n $ of $ n $ vertices. Then
	\begin{itemize}
		\item[(i)]If $ n \equiv 0$ ($\mod  4 $), then $ \mathcal{E}(\Phi) \geq \mathcal{E}(C_{n}) $. Equality holds if and only if $ \Phi $ is balanced.
		\item[(ii)] If $ n \equiv 1$ ($\mod  4 $) or $ n\equiv 3 $ ($\mod  4 $), then $  \mathcal{E}(\Phi) \leq \mathcal{E}(C_{n}) $. Equality holds if and only if either $ \Phi $ or $ -\Phi $ is balanced.
		\item[(iii)] If $ n \equiv 2$ ($\mod  4 $), then $  \mathcal{E}(\Phi) \leq \mathcal{E}(C_{n}) $. Equality holds if and only if  $ \Phi $  is balanced.
	\end{itemize}
\end{corollary}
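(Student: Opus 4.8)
The plan is to recognize Corollary~\ref{Cor5.1} as the special case $G = C_n$ of Theorem~\ref{Th5.1}, so that essentially no new work is required. First I would observe that the cycle graph $C_n$ is connected, has $n$ vertices and exactly $n$ edges, and is therefore unicyclic; moreover its unique cycle is $C_n$ itself, of length $n$, so its girth is $r = n$. With this identification in hand, Theorem~\ref{Th5.1} applies verbatim to the $\mathbb{T}$-gain graph $\Phi = (C_n, \varphi)$ with the parameter $r$ equal to $n$.

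It then remains only to transcribe the residue classes. The hypothesis $n \equiv 0 \pmod{4}$ is precisely Case~(i) of Theorem~\ref{Th5.1}, yielding $\mathcal{E}(\Phi) \geq \mathcal{E}(C_n)$ with equality if and only if $\Phi$ is balanced; the hypotheses $n \equiv 1 \pmod{4}$ and $n \equiv 3 \pmod{4}$ fall under Case~(ii), yielding $\mathcal{E}(\Phi) \leq \mathcal{E}(C_n)$ with equality if and only if $\Phi$ or $-\Phi$ is balanced; and $n \equiv 2 \pmod{4}$ is Case~(iii), yielding $\mathcal{E}(\Phi) \leq \mathcal{E}(C_n)$ with equality if and only if $\Phi$ is balanced. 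Substituting these conclusions gives each of the three claims.

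Since the statements carry over directly, I do not expect any genuine obstacle here; the substantive content lives entirely in Theorem~\ref{Th5.1}, which I am assuming. The only point worth a line of verification is that when $G = C_n$ the unique cycle $C_r$ is the whole graph, so the deleted subgraph $\Phi - C_r$ is empty and the matching quantities $m(\Phi - C_r, k-2s)$ appearing in the coefficient formulas collapse to $m(\emptyset, 0) = 1$ (and vanish otherwise). This is already subsumed by the general computation of the $b_j(\Phi)$ via Lemma~\ref{Lm2.3} inside the proof of Theorem~\ref{Th5.1}, so no separate argument is needed, and the corollary follows immediately.
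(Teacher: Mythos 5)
Your proposal is correct and matches the paper exactly: the corollary is stated there as an immediate consequence of Theorem \ref{Th5.1}, obtained by noting that $C_n$ is unicyclic with girth $r=n$ and reading off the three residue-class cases. Nothing further is needed.
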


For a simple graph $ G $, the collection $ \mathcal{T}_G $ is called equienergetic if the energy of all $ \mathbb{T} $-gain graphs in $\mathcal{T}_G $ are the same. If $ T $ is a tree, then $ \mathcal{T}_T $ is equienergetic. Next, we consider the problem of characterizing graphs $G$ for which the family $\mathcal{T}_G $ is equienergetic.

\begin{theorem}\label{thm-equi-ener}
	If $ G $ is a graph with all its cycles are vertex disjoint, then $ \mathcal{T}_{G} $ is not equienergetic.
\end{theorem}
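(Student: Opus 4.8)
The plan is to exhibit two gains on $G$ with different energies. Since a tree yields an equienergetic family, the statement is meaningful only when $G$ contains a cycle, so fix one cycle $C$ of length $\ell$. Because the cycles of $G$ are pairwise vertex disjoint, they are edge disjoint, and hence the gain can be prescribed on each cycle independently by assigning a value to a single edge of that cycle. I would use this freedom to define, for $\tau\in[-1,1]$, a gain $\varphi_\tau$ on $G$ with $\Re(\varphi_\tau(C'))=0$ for every cycle $C'\neq C$ and $\Re(\varphi_\tau(C))=\tau$; write $\Phi_\tau=(G,\varphi_\tau)$. The goal is then to show $\mathcal{E}(\Phi_0)\neq\mathcal{E}(\Phi_1)$.

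First I would collapse the characteristic polynomial of $\Phi_\tau$. By Theorem \ref{Th4.2}, $P_{\Phi_\tau}(x)$ is a sum over the vertex-disjoint unions of cycles $K$, each term carrying the factor $\prod_{C'\in K}\Re(\varphi_\tau(C'))$. Any $K$ containing a cycle other than $C$ carries a factor $\Re(\varphi_\tau(C'))=0$ and therefore vanishes, and any nonempty $K$ not containing $C$ consists solely of such cycles and also vanishes. Thus only $K=\emptyset$ and $K=C$ survive, giving
\begin{equation*}
	P_{\Phi_\tau}(x)=m_G(x)-2\tau\,m_{G-C}(x).
\end{equation*}
This is precisely the form taken by the characteristic polynomial of a \emph{unicyclic} gain graph whose unique cycle is $C$ (the same expression underlies the coefficient formulas in the proof of Theorem \ref{Th5.1}), with $\Re(\varphi(C))$ replaced by $\tau$. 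The vertex-disjointness hypothesis is exactly what makes this reduction possible.

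Next I would invoke the unicyclic analysis. Feeding the displayed polynomial into the Coulson formula \eqref{eq10}, the energy $\mathcal{E}(\Phi_\tau)$ becomes a function of $\tau$ governed by the same coefficient-sign comparison as in the proof of Theorem \ref{Th5.1}, with $r$ there replaced by $\ell$: only the coefficients $b_{\ell+2j}$ depend on $\tau$, they are even-indexed when $\ell$ is even and odd-indexed when $\ell$ is odd, and in each residue class of $\ell \bmod 4$ the relevant quantities $(-1)^k b_{2k}$ or $|(-1)^k b_{2k+1}|$ are strictly monotone in $\tau$ with the extremal value attained exactly at $\tau=1$ (and, when $\ell$ is odd, also at $\tau=-1$). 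Consequently $\mathcal{E}(\Phi_0)\neq\mathcal{E}(\Phi_1)$, so $\Phi_0$ and $\Phi_1$ witness that $\mathcal{T}_G$ is not equienergetic.

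The only genuinely new step—and the crux of the argument—is the polynomial collapse of the second paragraph, which isolates a single cycle and reduces the multi-cycle problem to the unicyclic one; this is where vertex-disjointness of the cycles is indispensable, since it lets me zero out the real parts of the remaining cycle gains without constraining the gain on $C$. The monotonicity of $\mathcal{E}(\Phi_\tau)$ in $\tau$ is then inherited verbatim from Theorem \ref{Th5.1}, and the one point still requiring a line of justification is that the perturbation $m_{G-C}(x)$ is not identically zero—it is monic of degree $n-\ell$—so that $\tau=0$ and $\tau=1$ are genuinely separated in energy.
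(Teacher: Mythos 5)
Your proof is correct and follows essentially the same route as the paper: the paper likewise exploits vertex-disjointness to set $\Re(\varphi(C'))=0$ on all cycles but one and then compares two choices of $\Re(\varphi(C_1))$ (namely $-\tfrac12$ and $-1$) through the Coulson integrands $A(t)^2+B(t)^2$, splitting into cases by the parity of the distinguished cycle exactly as your $\ell \bmod 4$ discussion does. Your explicit collapse $P_{\Phi_\tau}(x)=m_G(x)-2\tau\,m_{G-C}(x)$ and the remark that $m_{G-C}$ is monic (hence the perturbation is nontrivial) are a slightly cleaner packaging of the same argument.
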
 
\begin{proof}
	Let $ G $ be a graph with vertex disjoint cycles, and let $ C_1,C_2, \dots, C_p $ be the only cycles in $ G $. Let  $ \Phi_{1} $ and $ \Phi_{2} $  be two $\mathbb{T} $-gain graphs defined on $ G $. Let $ P_{\Phi_j}(x)= x^n+b_1(\Phi_j)x^{n-1}+b_2(\Phi_j)x^{n-2}+\cdots+b_n(\Phi_j)$ be the characteristic polynomial of $ \Phi_j $, for $ j=1,2 $. By Lemma \ref{Lm4.2}, 
	\begin{equation*}
		\mathcal{E}(\Phi_j)=\frac{1}{2\pi}\int_{-\infty}^{+\infty}\frac{1}{t^{2}}\log\Bigg \{A_j(t)^2+B_j(t)^2 \Bigg \}dt,
	\end{equation*}	
	where $ A_j(t)=\bigg(1+ \sum\limits_{k=1}^{\lfloor\frac{n}{2} \rfloor} (-1)^kb_{2k}(\Phi_j)t^{2k}\bigg), B_j(t)=\bigg( \sum\limits_{k=0}^{\lfloor\frac{n-1}{2} \rfloor}(-1)^kb_{2k+1}(\Phi_j)t^{2k+1}\bigg)$ for $ j=1,2 $, and $ b_k(\Phi_j)'s$ for $ k=1,2, \cdots,n$ are given by \eqref{eq5}.
	Now, we consider the following three cases. In each of the cases, we find the choices for the gains $\Phi_1$ and $\Phi_2$ that serve the purpose: 
	
	\noindent{\bf Case (a):} Suppose every cycle in $G$ has odd length.  Define $ \Phi_{1} $ and $ \Phi_{2} $ as follows: $ \varphi_{1}(\vec{C_{1}})=-\frac{1}{2}+i\frac{\sqrt{3}}{2} $, $ \varphi_{1}(\vec{C_k})=i $ for $ k=2,3,\dots, p $; and $ \varphi_{2}(\vec{C_{1}})=-1 $, $ \varphi_{2}(\vec{C_k})=i $ for $ k=2,3,\dots, p.$ Then, $ A_1(t)^2=A_2(t)^2$ and $ B_1(t)^2 \ne B_2(t)^2 $. Thus, $ \mathcal{E}(\Phi_{1}) \ne \mathcal{E}(\Phi_{2})$.
	
	\noindent	{\bf Case (b):} Suppose every cycle in $G$ has even length.  Define $ \Phi_{1} $ and $ \Phi_{2} $ as follows: $ \varphi_{1}(\vec{C_{1}})=-\frac{1}{2}+i\frac{\sqrt{3}}{2} $, $ \varphi_{1}(\vec{C_k})=i $ for $ k=2,3,\dots, p $. Also $ \varphi_{2}(\vec{C_{1}})=-1 $, $ \varphi_{2}(\vec{C_k})=i $ for $ k=2,3,\dots, p.$ Then, $ B(t)=B^{'}(t)=0 $ and $ A_1(t)^2 \ne A_2(t)^2 $. Thus,  $ \mathcal{E}(\Phi_{1})\ne \mathcal{E}(\Phi_{2}) $.
	
	\noindent 	{\bf Case (c):} Suppose $G$ contains both the odd and even cycles. Let $ C_1 $ be an even cycle. Define $ \Phi_{1} $ and $ \Phi_{2} $ as follows:  $ \varphi_{1}(\vec{C})=\varphi_{2}(\vec{C})=i $ for all cycles $ C $  except $ C_1$ and define $\varphi_{1}(\vec{C_1})=-\frac{1}{2}+i\frac{\sqrt{3}}{2} $, $ \varphi_{2}(\vec{C_{1}})=-1 $. The rest of the proof of this case is similar to that of Case (b).
\end{proof}

Given a graph $G$, in  \cite{Our-paper-1}, we established that the spectrum is the same for all the elements of  $\mathcal{T}_G$ if and only if $G$ is a tree. Based on the above results, we conjecture the following: 
\begin{conj}
	For a connected simple graph $ G $, if $ \mathcal{T}_{G} $ is equienergetic then $ G $ is a tree.
\end{conj}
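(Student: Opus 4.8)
The goal is to produce, for each such $G$, two gains $\Phi_1,\Phi_2\in\mathcal T_G$ with $\mathcal E(\Phi_1)\neq\mathcal E(\Phi_2)$. The starting point is Lemma \ref{Lm2.3}: each coefficient $b_i(\Phi)$ is a sum over elementary subgraphs $H$ of a quantity involving $\prod_{C\in\mathcal C(H)}\Re(\varphi(C))$. Since every cycle of $G$ lies in a vertex-disjoint family $C_1,\dots,C_p$, every elementary subgraph is the disjoint union of a matching and a subcollection of these cycles. Hence each $b_i(\Phi)$—and therefore, through the Coulson formula of Theorem \ref{Lm4.2}, the energy $\mathcal E(\Phi)$—depends on $\varphi$ only through the $p$ real numbers $\Re(\varphi(C_1)),\dots,\Re(\varphi(C_p))$, each of which can be prescribed arbitrarily in $[-1,1]$ and independently (choosing one edge per cycle, the cycles being edge-disjoint).

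First I would isolate a single cycle. Fix one cycle, say $C_1$, and in both $\Phi_1$ and $\Phi_2$ assign every other cycle the gain $i$, so that $\Re(\varphi(C_k))=0$ for $k\ge2$ and those cycles contribute nothing to any $b_i$. Writing $c:=\Re(\varphi(C_1))$, the coefficients then depend on $\Phi$ through $c$ alone. Splitting by the parity of $\lvert C_1\rvert$: if $C_1$ is odd, its vertex count is odd, so it contributes only to odd-index coefficients, giving $A(t)$ independent of $c$ and $B(t)=c\,\tilde B(t)$ for a fixed polynomial $\tilde B\not\equiv0$ (the cycle taken alone already contributes the nonzero term $-2c$ to $b_{\lvert C_1\rvert}$); if $C_1$ is even, all odd coefficients vanish so $B\equiv0$, while $A(t)=A_0(t)+c\,\tilde A(t)$ with $\tilde A\not\equiv0$. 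When $G$ has both even and odd cycles, I would simply take $C_1$ to be an even cycle, reducing to the even case.

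The crux is to convert ``the coefficient polynomials differ'' into ``the energies differ,'' which I would do by a pointwise monotonicity argument in the Coulson integrand $\tfrac{1}{t^2}\log\{A(t)^2+B(t)^2\}$, choosing the free parameter with $c\le 0$ so that all signs align. In the odd case take $c=-\tfrac12$ for $\Phi_1$ and $c=-1$ for $\Phi_2$; then $A$ is common and $A(t)^2+c^2\tilde B(t)^2$ is strictly larger pointwise (on the positive-measure set where $\tilde B\neq0$) for $\Phi_2$, whence $\mathcal E(\Phi_2)>\mathcal E(\Phi_1)$. In the even case the same two values of $c$ make every $(-1)^kb_{2k}(\Phi_j)=m(G,k)-2c\,m(G-C_1,k-\ell)\ge0$ (where $\lvert C_1\rvert=2\ell$), so each $A_j(t)\ge 1>0$, and since $-2c$ is larger for $\Phi_2$ we obtain $A_2(t)\ge A_1(t)>0$ with strict inequality on a positive-measure set; hence again $\mathcal E(\Phi_2)>\mathcal E(\Phi_1)$.

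I expect the main obstacle to be precisely this last monotonicity step: a difference in the characteristic polynomials does not by itself force a difference in energy, and a careless choice of gains can leave the two Coulson integrands incomparable. The device that resolves it is to restrict to $c\le0$, which guarantees the sign pattern $(-1)^kb_{2k}\ge0$ needed for a clean pointwise comparison of $A(t)$ (even case) and the trivial monotonicity of $c^2\tilde B(t)^2$ (odd case). The remaining checks are routine: that $\tilde A,\tilde B\not\equiv0$ (immediate from the single-cycle contribution to $b_{\lvert C_1\rvert}$), and that the prescribed cycle gains are realizable, which holds because vertex-disjoint cycles are edge-disjoint.
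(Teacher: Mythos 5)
The statement you are proving is the paper's closing \emph{conjecture}: for every connected non-tree $G$, the family $\mathcal{T}_G$ fails to be equienergetic. The paper does not prove this; it only proves Theorem \ref{thm-equi-ener}, which assumes that all cycles of $G$ are pairwise vertex-disjoint. Your argument silently imports that same hypothesis at the very first step, where you assert that ``every cycle of $G$ lies in a vertex-disjoint family $C_1,\dots,C_p$,'' that every elementary subgraph is a matching together with a subcollection of these cycles, and that the quantities $\Re(\varphi(C_1)),\dots,\Re(\varphi(C_p))$ can be prescribed independently. None of this holds for a general non-tree graph: already for a theta graph or for $K_4$ the distinct cycles share vertices and edges, Lemma \ref{Lm2.3} forces the coefficients $b_i(\Phi)$ to involve cycles outside any fixed disjoint family, and the cycle gains are constrained by one another (the gain of the third cycle of a theta graph is determined by the other two). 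So what you have written is a proof of Theorem \ref{thm-equi-ener}, not of the conjecture; the genuinely open content of the conjecture --- graphs with overlapping cycles --- is untouched.

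Within the vertex-disjoint setting your argument is essentially the paper's proof of Theorem \ref{thm-equi-ener}: the same choice of gains ($-\tfrac12+i\tfrac{\sqrt3}{2}$ versus $-1$ on a distinguished cycle, $i$ elsewhere) and the Coulson formula of Theorem \ref{Lm4.2}. Your insistence on a pointwise monotone comparison of the integrands is in fact a useful sharpening, since the paper only asserts that $A_1^2\neq A_2^2$ or $B_1^2\neq B_2^2$ implies distinct energies. One sign to check in your even case: for $|C_1|=2\ell$ one gets $(-1)^k b_{2k}(\Phi)=m(G,k)+(-1)^{\ell+1}\,2c\,m(G-C_1,k-\ell)$, so whether $c\le 0$ or $c\ge 0$ makes all coefficients nonnegative depends on the parity of $\ell$, i.e.\ on $|C_1| \bmod 4$, exactly as in the case split of Theorem \ref{Th5.1}; your uniform choice $c\le 0$ does not align the signs when $|C_1|\equiv 2 \pmod 4$. (In the odd case only $c^2$ enters, so the sign of $c$ is irrelevant there.)
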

\section*{Acknowledgments}
Aniruddha Samanta thanks the National Board for Higher Mathematics (NBHM), Department of Atomic Energy, India, for financial support in the form of an NBHM Post-doctoral Fellowship (Sanction Order No. 0204/21/2023/R$\&$D-II/10038). M. Rajesh Kannan would like to thank the SRC, IIT Hyderabad for financial support.

	\bibliographystyle{amsplain}
	\bibliography{raj-ani-ref1}

\providecommand{\bysame}{\leavevmode\hbox to3em{\hrulefill}\thinspace}
\providecommand{\MR}{\relax\ifhmode\unskip\space\fi MR }
\providecommand{\MRhref}[2]{%
  \href{http://www.ams.org/mathscinet-getitem?mr=#1}{#2}
}
\providecommand{\href}[2]{#2}
\begin{thebibliography}{10}

\bibitem{Akbari}
S.~Akbari, M.~Einollahzadeh, M.~M. Karkhaneei, and M.~A. Nematollahi,
  \emph{Proof of a conjecture on the {S}eidel energy of graphs}, European J.
  Combin. \textbf{86} (2020), 103078, 8. \MR{4047519}

\bibitem{S_Akbari}
Saieed Akbari, Abdullah Alazemi, and Milica Andelic, \emph{Upper bounds on the
  energy of graph in terms of matching number}, Applicable Analysis and
  Discrete Mathematics \textbf{x} (2021), no.~x, 1--16.

\bibitem{Vertexenergy1}
Octavio Arizmendi, Jorge Fernandez~Hidalgo, and Oliver Juarez-Romero,
  \emph{Energy of a vertex}, Linear Algebra Appl. \textbf{557} (2018),
  464--495. \MR{3848283}

\bibitem{bap-kal-pat-weighted}
R.~B. Bapat, D.~Kalita, and S.~Pati, \emph{On weighted directed graphs}, Linear
  Algebra Appl. \textbf{436} (2012), no.~1, 99--111. \MR{2859913}

\bibitem{Tgain9}
Francesco Belardo, Maurizio Brunetti, Matteo Cavaleri, and Alfredo Donno,
  \emph{Godsil-{M}c{K}ay switching for mixed and gain graphs over the circle
  group}, Linear Algebra Appl. \textbf{614} (2021), 256--269. \MR{4209002}

\bibitem{Tgain8(line)}
Matteo Cavaleri, Daniele D'Angeli, and Alfredo Donno, \emph{Gain-line graphs
  via {$G$}-phases and group representations}, Linear Algebra Appl.
  \textbf{613} (2021), 241--270. \MR{4199816}

\bibitem{Cheng-Horn-Li}
Che-Man Cheng, Roger~A. Horn, and Chi-Kwong Li, \emph{Inequalities and
  equalities for the {C}artesian decomposition of complex matrices}, vol. 341,
  2002, Special issue dedicated to Professor T. Ando, pp.~219--237.
  \MR{1873621}

\bibitem{Day-So}
Jane Day and Wasin So, \emph{Graph energy change due to edge deletion}, Linear
  Algebra Appl. \textbf{428} (2008), no.~8-9, 2070--2078. \MR{2401641}

\bibitem{Godsil_Gutman}
C.~D. Godsil and I.~Gutman, \emph{On the theory of the matching polynomial}, J.
  Graph Theory \textbf{5} (1981), no.~2, 137--144. \MR{615001}

\bibitem{guo-mohar-jgt}
Krystal Guo and Bojan Mohar, \emph{Hermitian adjacency matrix of digraphs and
  mixed graphs}, J. Graph Theory \textbf{85} (2017), no.~1, 217--248.
  \MR{3634484}

\bibitem{Gutman}
Ivan Gutman, \emph{The energy of a graph}, Ber. Math.-Statist. Sekt. Forsch.
  Graz (1978), no.~100-105, Ber. No. 103, 22, 10. Steierm\"{a}rkisches
  Mathematisches Symposium (Stift Rein, Graz, 1978). \MR{525890}

\bibitem{Haemers}
Willem~H. Haemers, \emph{Seidel switching and graph energy}, MATCH Commun.
  Math. Comput. Chem. \textbf{68} (2012), no.~3, 653--659. \MR{3052170}

\bibitem{book-Jukna}
Stasys Jukna, \emph{Extremal combinatorics}, second ed., Springer-Verlag Berlin
  Heidelberg, 2011.

\bibitem{book_Gutman}
Xueliang Li, Yongtang Shi, and Ivan Gutman, \emph{Graph energy}, Springer, New
  York, 2012. \MR{2953171}

\bibitem{Lie}
Jianxi Liu and Xueliang Li, \emph{Hermitian-adjacency matrices and {H}ermitian
  energies of mixed graphs}, Linear Algebra Appl. \textbf{466} (2015),
  182--207. \MR{3278246}

\bibitem{Xiaobin-Ma}
Xiaobin Ma, \emph{A low bound on graph energy in terms of minimum degree},
  MATCH Commun. Math. Comput. Chem. \textbf{81} (2019), 393--404.

\bibitem{Miodrag-Bozin-Gutman}
Miodrag Mateljevi\'{c}, Vladimir Bo\v{z}in, and Ivan Gutman, \emph{Energy of a
  polynomial and the {C}oulson integral formula}, J. Math. Chem. \textbf{48}
  (2010), no.~4, 1062--1068. \MR{2726342}

\bibitem{Our-paper-1}
Ranjit Mehatari, M.~Rajesh Kannan, and Aniruddha Samanta, \emph{On the
  adjacency matrix of a complex unit gain graph}, Linear Multilinear Algebra
  \textbf{70} (2022), no.~9, 1798--1813. \MR{4429447}

\bibitem{mohar-secondkind}
Bojan Mohar, \emph{A new kind of {H}ermitian matrices for digraphs}, Linear
  Algebra Appl. \textbf{584} (2020), 343--352. \MR{4013179}

\bibitem{NIKIFOROV2009819}
Vladimir Nikiforov, \emph{More spectral bounds on the clique and independence
  numbers}, J. Combin. Theory Ser. B \textbf{99} (2009), no.~6, 819--826.
  \MR{2558437}

\bibitem{Mohammad_Reza_Oboudi}
Mohammad~Reza Oboudi, \emph{A very short proof for a lower bound for energy of
  graphs}, MATCH Commun. Math. Comput. Chem. \textbf{84} (2020), 345--347.

\bibitem{Pan-Chen-Li}
Yingui Pan, Jing Chen, and Jianping Li, \emph{Upper bound of graph energy in
  terms of matching number}, MATCH Commun. Math. Comput. Chem. \textbf{83}
  (2020), no.~2, 541--554.

\bibitem{Reff2016}
Nathan Reff, \emph{Oriented gain graphs, line graphs and eigenvalues}, Linear
  Algebra Appl. \textbf{506} (2016), 316--328. \MR{3530682}

\bibitem{our-paper3}
Aniruddha Samanta and M.~Rajesh Kannan, \emph{Bounds for the energy of a
  complex unit gain graph}, Linear Algebra Appl. \textbf{612} (2021), 1--29.
  \MR{4188336}

\bibitem{book-stanic}
Zoran Stanić, \emph{Inequalities for graph eigenvalues}, London Mathematical
  Society Lecture Note Series, Cambridge University Press, 2015.

\bibitem{F.Tian-D.Wong}
Fenglei Tian and Dein Wong, \emph{Upper bounds of the energy of triangle-free
  graphs in terms of matching number}, Linear Multilinear Algebra \textbf{67}
  (2019), no.~1, 20--28. \MR{3885877}

\bibitem{Wang-Ma}
Long Wang and Xiaobin Ma, \emph{Bounds of graph energy in terms of vertex cover
  number}, Linear Algebra Appl. \textbf{517} (2017), 207--216. \MR{3592020}

\bibitem{Wei-Li}
Wei Wei and Shuchao Li, \emph{Relation between the {H}ermitian energy of a
  mixed graph and the matching number of its underlying graph}, Linear
  Multilinear Algebra \textbf{68} (2020), no.~7, 1395--1410. \MR{4137053}

\bibitem{Wong-Wang-Chu}
Dein Wong, Xinlei Wang, and Rui Chu, \emph{Lower bounds of graph energy in
  terms of matching number}, Linear Algebra Appl. \textbf{549} (2018),
  276--286. \MR{3784349}

\bibitem{Tgain3(rank)}
Feng Xu, Qi~Zhou, Dein Wong, and Fenglei Tian, \emph{Complex unit gain graphs
  of rank 2}, Linear Algebra Appl. \textbf{597} (2020), 155--169. \MR{4080071}

\bibitem{Zaslav}
Thomas Zaslavsky, \emph{Biased graphs. {I}. {B}ias, balance, and gains}, J.
  Combin. Theory Ser. B \textbf{47} (1989), no.~1, 32--52. \MR{1007712}

\bibitem{signed-adj}
Thomas Zaslavsky, \emph{Matrices in the theory of signed simple graphs},
  Advances in discrete mathematics and applications: {M}ysore, 2008, Ramanujan
  Math. Soc. Lect. Notes Ser., vol.~13, Ramanujan Math. Soc., Mysore, 2010,
  pp.~207--229. \MR{2766941}

\end{thebibliography}

\end{document}